\newtheorem{theorem}[equation]{Theorem}
\newtheorem{lemma}{Lemma}[section]
\newtheorem{corollary}[lemma]{Corollary}
\newtheorem{proposition}[lemma]{Proposition}
\theoremstyle{definition}
\newtheorem{definition}[lemma]{Definition}
\theoremstyle{remark}
\newtheorem{remark}[lemma]{Remark}
\newtheorem{example}[lemma]{Example}
\newcommand\N{{\mathbb N}}
\newcommand\Cr{{\mathrm{Cr}}}
\newcommand\Bir{{\mathrm{Bir}}}
\newcommand\id{{\mathrm{id}}}
\newcommand\R{{\mathbb R}}
\newcommand\A{{\mathbb A}}
\newcommand\C{{\mathbb C}}
\renewcommand\k{{\mathrm{k}}}
\newcommand\kk{{\mathbf{k}}}
\newcommand\p{{\mathbb P}}
\newcommand\G[1]{\Bir(\p^n)_{\le #1}}
\newcommand\Proj{{\mathbb P}}
\newcommand\PGL{{\mathrm{PGL}}}
\newcommand\Aut{{\mathrm{Aut}}}
\newcommand\tr{\hbox to 1mm  {${}^t \!  $} }
\title{Topologies and structures of the Cremona groups}
\thanks{The authors gratefully acknowledge support by the Swiss National Science Foundation Grant  "Birational Geometry" PP00P2\_128422 /1 and by the French National Research Agency Grant "BirPol", ANR-11-JS01-004-01. }
\author{J\'er\'emy Blanc}
\address{J\'er\'emy Blanc, Universit\"{a}t Basel, Mathematisches Institut, Rheinsprung $21$, CH-$4051$ Basel, Switzerland.}
\email{jeremy.blanc@unibas.ch}
\author{Jean-Philippe Furter}
\address{Jean-Philippe Furter, Dpt. of Math., Univ. of La Rochelle, av. Cr\'epeau, 17000 La Rochelle, France}
\email{jpfurter@univ-lr.fr}
\subjclass[2010]{14E07, 20G15, 54H11}
\begin{document}
\maketitle

\centerline{\today}
\begin{abstract}We study the algebraic structure of the $n$-dimensional Cremona group and show that it is not an algebraic group of infinite dimension (ind-group) if $n\ge 2$. We  describe the obstruction to this, which is of a topological nature.

By contrast, we show the existence of a Euclidean topology on the Cremona group which extends that of its classical subgroups and makes it a topological group.\end{abstract}

\section{Introduction}
In \cite[\S 3]{Sha1},  I.R. Shafarevich asked:
"Can one introduce a universal structure of an infinite-dimensional group
in the group of all automorphisms (resp. all birational automorphisms)
of arbitrary algebraic variety?"\\

In an open problem session held at the international congress (see \cite{Mum}),
D.~Mumford suggested:
"Let $G= {\rm Aut}_{\C} \C (x_0,x_1)$ be the Cremona group [\ldots].
The problem is to topologize $G$ and associate to it a Lie algebra consisting,
roughly, of those meromorphic vector fields $D$ on $\Proj ^2 ( \C)$ which "integrate"
into an analytic family of Cremona transformations".\\

In $2010$,  in the question session of the workshop "Subgroups of the Cremona group" in Edinburgh,  J.-P. Serre asked the following question (see also the introduction of \cite{Fa}) "Is it possible to introduce such topology on $\Cr_2(\C)$ that is compatible with $\PGL(3,\C)$ and $\PGL(2,\C)\times \PGL(2,\C)$?"\\

Let $\k$ be a fixed field, and $n$  a positive integer.
The  $n$-dimensional Cremona group over $\k$, written $\Cr_n(\k)$,
is the group of all birational transformations of the space of dimension $n$ over $\k$ (affine or projective).
Algebraically, it corresponds to the group of $\k$-automorphisms of the field $\k(x_1,\dots,x_n)$. We are interested in the possible structures one can put on it. For instance, is $\mathrm{Cr}_n(\k)$ an algebraic group (of infinite dimension) or a topological group?\\

In the algebraic geometric setting, we would like to see $\Cr_{n}(\k)$ as an algebraic group of infinite dimension.
We will show that this is not possible when $n \geq 2$, and shall describe the obstructions to this.
For any algebraic variety $A$ defined over $\k$,
there is a natural notion of families of elements of $\Cr_n(\k)$ parametrised by $A$, or equivalently of elements of $\Cr_n(A)$, which we will recall  (Definition~\ref{Defi:Family}). These are maps $A(\k)\to \Cr_n(\k)$ compatible with the structures of algebraic varieties.

When $n=1$, $\Cr_n(\k)$ is isomorphic to the algebraic group $\PGL(2,\k)$,
and families $A\to \Cr_1(\k)$ correspond to morphisms of algebraic varieties.

For $n\ge 2$, the situation is different. Denoting by $\Cr_n(\k)_d \subseteq  \Cr_n(\k)$ the set of birational maps of degree $d$
(i.e.\ maps $f\colon \p^n_\k\dasharrow \p^n_\k$ of degree $d$),
one can verify that $\Cr_n(\k)_d$ has the structure of an algebraic variety defined over $\k$, such that families $A\to \Cr_n(\k)_d$ 
correspond to morphisms of algebraic varieties (Proposition~\ref{Prop:DegreD}). One can thus decompose $\Cr_n(\k)$ into a disjoint (infinite) union of algebraic varieties,
having unbounded dimension. However, the structure of $\Cr_n(\k)$ is more complicated; putting on $\Cr_n(\k)$ the topology induced by all families,
the group is connected \cite{Bl}.

What we could expect, is to have a structure of algebraic variety of infinite dimension (ind-algebraic variety) on $\Cr_n(\k)$,
so that families $A\to \Cr_n(\k)$ would correspond to morphisms of algebraic varieties.
We will show that this is not the case:
\begin{theorem}\label{ThmNoStructure}
For any $n\ge 2$, there is no structure of algebraic variety of infinite dimension on $\Cr_n(\k)$, such that families $A\to \Cr_n(\k)$  correspond to morphisms of algebraic varieties.
\end{theorem}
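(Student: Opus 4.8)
The plan is to argue by contradiction. Suppose $\Cr_n(\k)$ carries an ind-variety structure $\Cr_n(\k)=\bigcup_{i} X_i$ (an increasing union of algebraic varieties with closed immersions $X_i\hookrightarrow X_{i+1}$, endowed with the inductive limit topology) for which families $A\to \Cr_n(\k)$ are exactly the morphisms of ind-varieties. Since each $A$ is of finite type, every such morphism factors through some $X_i$.

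First I would show that this hypothetical filtration is cofinal with the filtration by degree. The key input is that any family has bounded degree: a family $A\to\Cr_n(\k)$ is given by a single birational map of $\p^n\times A$ over $A$, hence by homogeneous forms of some fixed degree $N$ in the coordinates of $\p^n$, so every member has degree $\le N$. Applying this to the inclusion $X_i\hookrightarrow \Cr_n(\k)$ --- which, being a morphism from a variety, is a family --- gives $X_i\subseteq \Cr_n(\k)_{\le d_i}$ for some $d_i$, where $\Cr_n(\k)_{\le d}:=\bigcup_{e\le d}\Cr_n(\k)_e$. Conversely, the tautological family over $\Cr_n(\k)_e$ is, via Proposition~\ref{Prop:DegreD}, a morphism to the ind-variety, hence lands in some $X_{j}$; thus $\Cr_n(\k)_{\le d}\subseteq X_{j(d)}$. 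The two filtrations are therefore cofinal.

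I would then upgrade this to the statement that each $\Cr_n(\k)_{\le d}$ must itself be a variety. Indeed, $\Cr_n(\k)_{\le d}$ is a subset of the variety $X_{j(d)}$, and it is closed there because the degree is lower semicontinuous in families: in a family the effective degree drops exactly where the defining forms acquire a common factor, which is a closed condition. Hence $\Cr_n(\k)_{\le d}$ inherits the structure of a closed subvariety of $X_{j(d)}$, and by cofinality this structure represents the functor of families of degree $\le d$. The whole problem is thereby reduced to the following assertion, which I would isolate as the heart of the matter: for $n\ge 2$ and $d\ge 2$, the functor sending a variety $A$ to the set of families $A\to\Cr_n(\k)$ of degree $\le d$ is not representable by an algebraic variety.

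The main obstacle is precisely this non-representability, and it is here that the topological nature of the obstruction surfaces. The top stratum $\Cr_n(\k)_d$ is open in $\Cr_n(\k)_{\le d}$, with closed complement $\Cr_n(\k)_{\le d-1}$, and is the standard variety of Proposition~\ref{Prop:DegreD}; all the difficulty is concentrated along the lower strata. A birational map of degree $e<d$ admits a positive-dimensional family of degree-$d$ representations (multiply its defining forms by an arbitrary common factor of degree $d-e$), so the tautological parametrisation of $\Cr_n(\k)_{\le d}$ by degree-$d$ tuples of forms is bijective over the top stratum but contracts positive-dimensional fibres over every lower one. I expect the hard part to be turning this contraction with jumping fibre dimension into a rigorous obstruction: concretely, I would construct an explicit family of degree-$d$ maps degenerating to a fixed lower-degree map --- for instance, in $\Cr_2(\k)$, degenerations of conjugates and compositions of the standard quadratic involution --- and show that the way it approaches the boundary cannot be matched by a morphism into any finite-dimensional variety structure on $\Cr_n(\k)_{\le d}$. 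That this phenomenon is genuinely two-dimensional explains the hypothesis $n\ge 2$: for $n=1$ the group $\Cr_1(\k)\cong\PGL(2,\k)$ has no lower strata and the obstruction disappears.
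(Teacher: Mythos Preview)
Your reduction is sound but more elaborate than needed, and the decisive step is missing. The paper does not first argue cofinality of the degree filtration with the hypothetical $X_i$; it works directly with a single explicit closed subset $V\subseteq\Bir(\p^n)_{\le 2}$ and shows that $V$ cannot carry a variety structure compatible with the morphism property. Your scaffolding is correct (and recovers the paper's Proposition~\ref{Prop:led}(1) as a corollary), but it can be bypassed: once you have a morphism $\hat V\to\Cr_n(\k)$ from a variety whose image $V$ is closed, the hypothetical ind-variety structure already forces $V$ to be a closed subvariety and $\hat V\to V$ to be a morphism of varieties, without ever mentioning $\Cr_n(\k)_{\le d}$.

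The genuine gap is at the point you yourself flag as ``the hard part''. Observing that $\pi_d\colon H_d\to\Cr_n(\k)_{\le d}$ has jumping fibre dimension is not by itself an obstruction: plenty of morphisms of varieties contract positive-dimensional loci. A one-parameter degeneration of degree-$d$ maps to a lower-degree map only produces a curve in $\Cr_n(\k)_{\le d}$ hitting the boundary, which is perfectly compatible with a variety structure. What the paper does is construct a \emph{two}-parameter family: take $\hat V=\p^2\setminus\{(0{:}1{:}0),(0{:}0{:}1)\}$ and send $(a{:}b{:}c)$ to the birational map $(x_1,\dots,x_n)\dasharrow\bigl(x_1\cdot\frac{ax_2+b}{ax_2+c},x_2,\dots,x_n\bigr)$. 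The line $L=\{b=c\}$ is sent to the identity, the complement maps bijectively onto its image, and --- this is the point --- $\hat V\to V$ is a topological quotient. Since $L$ is a line in $\p^2$ missing the two deleted points, \emph{every} curve in $\hat V$ meets $L$; hence every curve in $V$ passes through $\mathrm{id}$. An irreducible algebraic variety of dimension $\ge 2$ cannot have a point lying on all of its curves, and this is the contradiction. Your proposal gestures toward ``degenerations of the standard quadratic involution'' but never isolates this attractive-point phenomenon, which is the actual mechanism of the proof.
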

The same result holds if we replace varieties by schemes, algebraic spaces or stacks. 
In fact, the problem does not arise because of high dimension; we will see that the set $\Cr_n(\k)_{\le d}$ of maps of degree $\le d$ is not an algebraic variety (neither an algebraic space nor a stack), when $d,n\ge 2$ (Proposition~\ref{Prop:led}), even if the $\Cr_n(\k)_i$ are algebraic varieties for $i=1,\dots,d$. The bad structure comes from the degeneration of maps of degree $d$ into maps of smaller degree.\\

We can even show that the obstruction  comes only from the topology:

\begin{theorem}\label{thmZar}
If $\kk$ is algebraically closed, there is no $\kk$-algebraic variety of infinite dimension which is homeomorphic to $\Cr_n(\kk)$.
\end{theorem}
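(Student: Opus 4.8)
\emph{Plan.} The plan is to argue by contradiction, isolating a purely topological invariant that every ind-variety possesses but $\Cr_n(\kk)$ violates; the invariant will detect the degeneration of degree-$d$ maps into maps of smaller degree that already obstructs the scheme structure in Proposition~\ref{Prop:led}. Suppose $\Phi\colon \Cr_n(\kk)\to X$ is a homeomorphism onto (the underlying space of) an ind-variety $X=\bigcup_m X_m$. First I would record the topological data that survives the homeomorphism: since the degree cannot increase under specialization, the locus where the degree is at most $d$ is closed in every family, so each $\Cr_n(\kk)_{\le d}$ is closed in $\Cr_n(\kk)$; and by Proposition~\ref{Prop:DegreD} the subspace topology on each stratum $\Cr_n(\kk)_d$ coincides with its variety topology, so that the continuous image of an irreducible curve inside a single stratum is an irreducible one-dimensional locally closed subset.

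The key topological property I would extract from $X$ is that for every irreducible locally closed subset $S$ of an ind-variety one has $\dim\overline S=\dim S$; equivalently, a proper closed subset of a finite-dimensional irreducible closed set has strictly smaller dimension. I would establish this for ind-varieties using the exhaustion $X=\bigcup_m X_m$: since dimension is the supremum of lengths of chains of irreducible closed subsets, any chain realizing $\dim\overline S$ is finite and, together with the density of $S$ in $\overline S$, is controlled by the closures of $S$ inside the varieties $X_m$. The subtle point here, in contrast with the Noetherian case, is that a finite-dimensional closed subset may meet each $X_m$ in a set of strictly smaller dimension, so one must argue through chains and density rather than through the filtration dimensions directly.

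On the $\Cr_n(\kk)$ side I would then violate this property. The heart of the matter is to produce a one-parameter family, that is, a morphism $\kk^*\to \Cr_n(\kk)_d$ whose image $S$ is an irreducible curve, such that the closure $\overline S$ taken in $\Cr_n(\kk)$ meets the lower-degree locus $\Cr_n(\kk)_{<d}$ in a set of dimension at least one. Concretely I would take a pencil of degree-$d$ transformations whose base points collide, or equivalently compose a fixed degree-$d$ map with a degenerating family of quadratic transformations, arranged so that the limit as the parameter tends to $0$ is not a single birational map but a positive-dimensional family $\{g_s\}$ of maps of strictly smaller degree. Since $S$ is irreducible its closure $\overline S$ is irreducible, and its containing both the curve $S$ and the curve $\{g_s\}$ forces $\dim\overline S\ge 2>1=\dim S$, contradicting the invariant above and hence the existence of $\Phi$.

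The main obstacle is precisely this last construction: one must show that the degenerate limits $\{g_s\}$ genuinely lie in the closure of $S$ for the topology defined by all families, that is, that no families-closed set can contain $S$ while excluding the $g_s$. This is a non-separatedness (equivalently, non-properness) statement for the degeneration, asserting that a single one-parameter family of degree-$d$ maps admits a whole curve of limits of smaller degree; it is the topological shadow of the failure of representability recorded in Proposition~\ref{Prop:led}, and verifying it requires an explicit analysis of how the linear systems and base loci of the members of $S$ degenerate. Once this is in place the dimension invariant does the rest, and the same family, read through $\Phi$, contradicts the ind-variety structure on $X$.
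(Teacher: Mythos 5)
There is a genuine gap, and it sits exactly where you locate the ``heart of the matter'': the one-parameter family you need cannot exist. Suppose $\rho\colon \kk^*\to \Cr_n(\kk)$ is a morphism with image $S\subseteq \Cr_n(\kk)_d$. By Corollary~\ref{Lem:ReleveGlobalDegreD} it lifts to a morphism $\tilde\rho\colon \kk^*\to H_{d,d}\subseteq H_d$, and $Z:=\overline{\tilde\rho(\kk^*)}$ is an irreducible closed subset of $H_d$ of dimension $\le 1$. Since $\pi_d$ is closed (Corollary~\ref{Coro:QuotientBirPnd}) and $\Bir(\p^n)_{\le d}$ is closed in $\Bir(\p^n)$, the set $\pi_d(Z)$ is a closed subset of $\Cr_n(\kk)$ containing $S$, so $\overline S\subseteq \pi_d(Z)$. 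But $Z\setminus \tilde\rho(\kk^*)$ is finite, so $\overline S\setminus S$ is finite: a one-parameter family of degree-$d$ maps acquires only finitely many limit points, never ``a whole curve of limits of smaller degree.'' Moreover the closed subsets of $\pi_d(Z)$ are exactly the images of saturated closed subsets of the curve $Z$, i.e.\ finite sets and $\pi_d(Z)$ itself, so $\dim\overline S=1=\dim S$ and your invariant is not violated. (The same conclusion holds for morphisms from any curve, using the local liftings of Lemma~\ref{Lem:ReleveLocalBirPn}.) The degeneration phenomenon in $\Cr_n(\kk)$ points in the opposite direction from the one you posit: it is not that one family of degree-$d$ maps has a positive-dimensional set of limits, but that a positive-dimensional set of families all degenerate to the \emph{same} limit.

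This is precisely what the paper exploits. Example~\ref{TheExample} produces a two-dimensional closed subset $V\subseteq \Cr_n(\kk)$, the image of $\hat V=\p^2\setminus\{2\text{ points}\}$ under a quotient map contracting a line $L$ to the identity and injective elsewhere. Since every curve in $\hat V$ meets $L$, every infinite closed subset of $V$ contains $\id$; the paper calls such a point an \emph{attractive point} (Proposition~\ref{Prop:attractive}) and shows (Lemma~\ref{attractive points of a closed irreducible subset of an ind-variety}) that no closed irreducible subset of an ind-variety, in either the ind- or the Zariski topology, has one. If you want to salvage your strategy, you should replace the invariant ``$\dim\overline S=\dim S$'' by an invariant of this kind --- one detecting that a two-dimensional closed set can have no infinite proper closed subsets avoiding a given point --- and replace the sought-for curve with a surface of families all specializing to the identity; your dimension-theoretic preliminaries on ind-varieties would then need to be redone for that statement.
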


The problem is, roughly speaking, that any point $\varphi\in\Cr_n(\kk)$ is contained in a closed subset $F$ of dimension $2$,
such that all curves of $F$ pass through $\varphi$.
See Section~\ref{Sec:TopObs} for more details on the topological obstruction.\\

The question of J.-P. Serre,
is whether $\Cr_2(\C)$ is a topological group, with a "natural topology", i.e.\ with a topology which induces on $\Aut(\p^2)=\PGL(3,\C)$
and $\Aut(\p^1\times \p^1)^0=\PGL(2,\C)\times \PGL(2,\C)$ the Euclidean topology.\footnote{Of course, here it cannot be the Zariski topology, since $\PGL(3,\C)$ and $\PGL(2,\C)$ are not topological groups when endowed with it.} We will give a positive answer to this question, after constructing in Section~\ref{Sec:Trans} a "Euclidean topology" on $\Cr_n(\k)$, for any local field~$\k$:

\begin{theorem}
For any $n\ge 1$, and any $($locally compact$)$ local field $\k$, there is a natural topology on $\Cr_n(\k)$, called \emph{the Euclidean topology}, which makes it a Hausdorff topological group,
and whose restriction to algebraic subgroups $($in particular to $\Aut(\p^n)=\PGL(n+1,\k)$ and $\Aut((\p^1)^n)^0=\PGL(2,\k)^n)$ is the classical Euclidean topology.
\end{theorem}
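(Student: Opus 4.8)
The plan is to build the topology one degree at a time and then pass to a limit. For each $d\ge 1$ I identify a birational map $\p^n\dasharrow\p^n$ of degree $\le d$ with the class $[h_0:\dots:h_n]$ of the tuple of homogeneous polynomials of degree $d$ defining it, so that $\Cr_n(\k)_{\le d}$ becomes a subset of the projective space $\p^{N_d}(\k)$ parametrising such tuples (with $N_d+1=(n+1)\binom{n+d}{d}$). Since $\k$ is a local field, $\p^{N_d}(\k)$ is compact and Hausdorff for the Euclidean topology, and I would topologise $\Cr_n(\k)_{\le d}$ through this embedding, taking care that a tuple with a common factor is identified with the lower-degree map it defines, so that degenerations of degree-$d$ maps into maps of smaller degree are seen by the topology (this is exactly the phenomenon responsible for Theorem~\ref{ThmNoStructure}). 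The first task is then to check that these topologies are compatible: that the natural inclusion $\Cr_n(\k)_{\le d}\hookrightarrow\Cr_n(\k)_{\le d+1}$ is a closed topological embedding, and that the topology it induces on each stratum $\Cr_n(\k)_i$ agrees with the classical Euclidean topology attached to the variety structure of Proposition~\ref{Prop:DegreD}.

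Next I would define the Euclidean topology on $\Cr_n(\k)=\bigcup_d\Cr_n(\k)_{\le d}$ as the inductive-limit (final) topology: a set is closed precisely when its intersection with every $\Cr_n(\k)_{\le d}$ is closed. Hausdorffness then follows from the fact that each bounded piece is Hausdorff and the inclusions are closed embeddings. The restriction to an algebraic subgroup $G$ is handled already at finite level: such a $G$ lies in some $\Cr_n(\k)_{\le d}$ and is a locally closed subvariety of it, so by functoriality of the Euclidean topology of $\k$-varieties the induced topology on $G(\k)$ is its classical Euclidean topology; in particular one recovers the usual topology on $\PGL(n+1,\k)=\Cr_n(\k)_1$ and on $\PGL(2,\k)^n\subseteq\Cr_n(\k)_{\le n}$.

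For the group axioms the key algebraic inputs are the classical degree bounds $\deg(g\circ f)\le\deg(g)\deg(f)$ and $\deg(f^{-1})\le(\deg f)^{n-1}$. Composition is given in coordinates by substituting one tuple of forms into the other, a bihomogeneous polynomial expression in the coefficients, and hence defines a continuous map $\Cr_n(\k)_{\le d}\times\Cr_n(\k)_{\le e}\to\Cr_n(\k)_{\le de}$; inversion, expressible through adjugate and elimination formulas in the coefficients, likewise gives a continuous map $\Cr_n(\k)_{\le d}\to\Cr_n(\k)_{\le d^{n-1}}$. To upgrade these finite-level statements into continuity of $m\colon\Cr_n(\k)\times\Cr_n(\k)\to\Cr_n(\k)$ and of inversion for the global inductive-limit topology, I would prove that the product of the two inductive limits is again the inductive limit of the products, i.e.\ $\Cr_n(\k)\times\Cr_n(\k)=\varinjlim_{d,e}\bigl(\Cr_n(\k)_{\le d}\times\Cr_n(\k)_{\le e}\bigr)$.

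This last point is the main obstacle. In general the product of two final topologies is strictly finer than the final topology of the products, so continuity of multiplication is not automatic; the standard remedy is to show that each bounded piece $\Cr_n(\k)_{\le d}$ is locally compact, since the formation of products commutes with colimits of increasing chains of locally compact Hausdorff spaces glued along closed embeddings. Establishing this local compactness is precisely the delicate part: by Proposition~\ref{Prop:led} the set $\Cr_n(\k)_{\le d}$ carries no algebraic variety structure, so it is not a priori locally closed in $\p^{N_d}(\k)$ and one cannot simply invoke functoriality of the $\k$-points functor. Instead one must analyse directly how maps of degree $d$ degenerate, in order to control the Euclidean closure of each piece and confirm both its local compactness and the closed-embedding property of the chain. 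Once this point-set input is secured, the topological-group axioms follow from the continuity of composition and inversion at each finite level.
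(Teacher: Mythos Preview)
Your overall architecture matches the paper's: topologise each bounded piece via tuples of homogeneous forms, take the inductive limit, and check the group operations level by level, with local compactness of the pieces as the hinge. Two points need sharpening. First, $\Cr_n(\k)_{\le d}$ is \emph{not} a subset of $\p^{N_d}(\k)$: a map of degree $m<d$ corresponds to a whole projective space of tuples, one for each common factor of degree $d-m$. The paper is explicit that the topology on $\Bir(\p^n)_{\le d}$ is the \emph{quotient} topology under the surjection $\pi_d\colon H_d\to\Bir(\p^n)_{\le d}$, where $H_d\subseteq W_d=\p^{N_d}$ is the locally closed set of tuples defining birational maps. This framing is exactly what delivers the local compactness you flag as the crux: one shows $\pi_d$ is proper by noting that its fibres are closed in the compact $W_d$, and that $\pi_d$ is closed via a sequential argument---given a sequence in a saturated set converging in $H_d$, pass to a subsequence of constant underlying degree $m$, factor each tuple as (degree-$(d-m)$ polynomial)$\times$(degree-$m$ tuple), and use compactness of both projective parameter spaces to extract convergent subsequences of the two factors separately. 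Properness of $\pi_d$ then yields local compactness of the quotient at once, and the closed embedding $\Bir(\p^n)_{\le d}\hookrightarrow\Bir(\p^n)_{\le d+1}$ is read off from the closed immersion $H_d\hookrightarrow H_{d+1}$, $f\mapsto x_0f$.

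Second, your proposed treatment of inversion via ``adjugate and elimination formulas'' does not work: there is no global polynomial expression for the inverse of a birational map in terms of its coefficients (already the degree of the inverse jumps on $H_d$). The paper instead uses the incidence variety $L\subseteq W_{d^{n-1}}\times W_d$ of pairs $(g,f)$ with $g\circ f$ a scalar multiple of the identity and nonzero Jacobians; since $W_{d^{n-1}}$ is compact, the second projection $\eta_2\colon L\to H_d$ is closed and surjective, and for $F\subseteq\Bir(\p^n)_{\le d^{n-1}}$ closed one has $\pi_d^{-1}(I_d^{-1}(F))=\eta_2\bigl((\pi_{d^{n-1}}\eta_1)^{-1}(F)\bigr)$, hence closed. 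Composition is handled as you say, by substitution $H_d\times H_k\to H_{dk}$; rather than arguing that the product of inductive limits equals the inductive limit of products, the paper simply observes that $\pi_d\times\pi_k$ is proper (product of proper maps), hence a quotient map, which suffices.
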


The authors thank Michel Brion, Hanspeter Kraft and Immanuel Stampfli for interesting discussions during the preparation of this article.

\section{The algebraic structure and Zariski topology of $\Cr_n(\kk)$}
\label{ZarTop}
\subsection{Definition of the topology via families and morphisms}
In this section, all our varieties will be algebraic varieties defined over some fixed algebraically closed  field $\kk$. In fact, all constructions made here are field-independent, and the generalisation to arbitrary fields can be carried out in the usual way. The topology on the varieties will only be the Zariski topology.

We recall the notion of families of birational maps, introduced by M.~Demazure in \cite{De} (see also \cite{Se}, \cite{Bl}).

\begin{definition} \label{Defi:Family}
Let $A,X$ be irreducible algebraic varieties, and let $f$ be a $A$-birational map of the $A$-variety $A\times X$, inducing an isomorphism $U\to V$, where $U,V$ are open subsets of $A\times X$, whose projections on $A$ are surjective.

The rational map $f$ is given by $(a,x)\dasharrow (a,p_2(f(a,x)))$, where $p_2$ is the second projection, and for each $\k$-point $a\in A$, the birational map $x\dasharrow p_2(f(a,x))$ corresponds to an element  $f_a\in \Bir(X)$.
The map $a\mapsto f_a$ represents a map from $A$ $($more precisely from the $A(k)$-points of $A)$ to $\Bir(X)$, and will be called a \emph{morphism} from $A$ to $\Bir(X)$.
\end{definition}
These notions yield the natural Zariski topology on $\Bir(X)$, introduced by M.~Demazure \cite{De} and J.-P. Serre \cite{Se}:
\begin{definition}  \label{defi: Zariski topology}
A subset $F\subseteq \Bir(X)$ is closed in the Zariski topology
if for any algebraic variety $A$ and any morphism $A\to \Bir(X)$ the preimage of $F$ is closed.
\end{definition}

Moreover, any birational map $X\dasharrow Y$ yields a homeomorphism between $\Bir(X)$ and $\Bir(Y)$, and we can make the following observations:
\begin{enumerate}
\item
For any $\varphi\in \Bir(X)$ the maps $\Bir(X)\to \Bir(X)$ given by $\psi\mapsto \psi \circ \varphi$, $\psi\mapsto \varphi\circ \psi$ and $\psi\mapsto \psi^{-1}$ are homeomorphisms. 
\item
It is equivalent to work with $\Bir(\A^n_\kk)$ or $\Bir(\p^n_\kk)$; we obtain thus the Zariski topology on the Cremona group  $\Cr_n(\kk)=\Bir(\A^n_\kk)$.
\end{enumerate}

The remainder of this section consists of a description of the topology of $\Cr_n(\kk)$ and its algebraic structure. 
\subsection{Bounded degree subsets}
Recall that a birational transformation $f$ of $\p^n=\p^n_\kk$ is given by 
$$h\colon(x_0:\dots:x_n)\dasharrow (h_0(x_0,\dots,x_n):\dots: h_n(x_0,\dots,x_n)),$$ where the $h_i$ are homogeneous polynomials of the same degree. Choosing the $h_i$ without common component, the degree of $h$ is the degree of the $h_i$.

\bigskip

We denote by $\Bir(\p^n)_{\le d}$ (respectively $\Bir(\p^n)_d$) the set of elements of $\Bir(\p^n)$ of degree $\le d$ (respectively of degree $d$), and have an increasing sequence
$$\Aut(\p^n)=\Bir(\p^n)_{\le 1}\subseteq \Bir(\p^n)_{\le 2}\subseteq \Bir(\p^n)_{\le 3} \subseteq \dots$$
whose union gives the group $\Bir(\p^n)$. As we will see, each $ \Bir(\p^n)_{\le d}$ is closed in $\Bir(\p^n)$ and the topology of $\Bir(\p^n)$ is the inductive topology induced by the above sequence. It then suffices to describe the topology of  $\Bir(\p^n)_{\le d}$ in order to understand the topology of $\Bir(\p^n)$.

\bigskip

\begin{definition}\label{DefWHG}
Let $d$ be a positive integer.
\begin{enumerate}
\item
We define $W_d$ to be the set of equivalence classes of non-zero  $(n+1)$-uples $(h_0,\dots,h_n)$
of homogeneous polynomials $h_i\in \kk[x_0,\dots,x_n]$ of degree $d$,
where $(h_0,\dots,h_n)$ is equivalent to $(\lambda h_0,\dots,\lambda h_n)$ for any $\lambda\in \kk^{*}$.
The equivalence class of $(h_0,\dots,h_n)$ will be denoted by $(h_0:\dots:h_n)$.
\item
We define $H_d\subseteq W_d$ to be the set of elements $h=(h_0:\dots:h_n)\in W_d$
such that the rational map
$\psi_h\colon \p^n\dasharrow \p^n$ given by $(x_0:\dots:x_n)\dasharrow
(h_0(x_0,\dots,x_n):\dots:h_n(x_0,\dots,x_n))$ is birational.
We denote by $\pi_d$ the map $H_d\to \Bir(\p^n_\kk)$ which sends $h$ onto $\psi_h$.
\end{enumerate}
\end{definition}

\begin{lemma} \label{lem:WHalgebraic}
Let $W_d,H_d$ be as in Definition~$\ref{DefWHG}$.
Then, the following holds:

\begin{enumerate}
\item
The set $W_d$ is  isomorphic to $\p^{r}$,
where $r=(n+1) \cdot \left(\begin{array}{c}d+n\\ d \end{array}\right)-1.$
\item
The set $H_d$ is locally closed in $W_d$, and thus inherits from $W_d$ the structure of an algebraic variety. 
\item
The map $\pi_d\colon H_d\to \Bir(\p^n)$ is a morphism $($in the sense of Definition~$\ref{Defi:Family})$. Its image is the set $\Bir(\p^n)_{\le d}$ of birational transformations of degree $\le d$.
\item
For any $\varphi \in \Bir(\p^n)_{\le d}$, the set $(\pi_d)^{-1}(\varphi)$ is closed in $W_d$ $($hence in $H_d)$.
\item
If $F\subseteq H_m$ $(m\ge 1)$ is closed, then $(\pi_d)^{-1}(\pi_m(F))$ is closed in $H_d$.
\end{enumerate}
\end{lemma}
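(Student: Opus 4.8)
The plan is to establish the five items essentially in order, since the later ones build on the earlier descriptions.

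For item (1), I would set up the parametrization explicitly. A homogeneous polynomial of degree $d$ in the $n+1$ variables $x_0,\dots,x_n$ is determined by its coefficients, and the number of monomials of degree $d$ is $\binom{d+n}{d}$. An $(n+1)$-uple of such polynomials is then a point in a vector space of dimension $(n+1)\binom{d+n}{d}$, and passing to the equivalence under scaling by $\kk^*$ gives precisely $\p^r$ with $r=(n+1)\binom{d+n}{d}-1$. This is a routine dimension count, so I would state it briefly.

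For item (2), I would argue that $H_d$ is locally closed by describing the birationality condition as a combination of an open and a closed condition. The map $\psi_h$ is dominant (equivalently birational onto its image of the right dimension) exactly when the Jacobian does not vanish identically, which is an open condition; and one then needs the generic degree of $\psi_h$ to be $1$. I would express this via the resultant-type or elimination-theoretic conditions cutting out where $\psi_h$ has an inverse of bounded degree, so that $H_d$ is the intersection of an open set with a closed set, hence locally closed. For item (3), I would verify that $\pi_d$ is a morphism in the sense of Definition~\ref{Defi:Family} by exhibiting the universal family: over $H_d\times\p^n$ the formula $(h,x)\dashrightarrow(h,\psi_h(x))$ defines an $H_d$-birational map of $H_d\times\p^n$, whose restriction to each fiber over $h$ is $\psi_h$. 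That the image is exactly $\Bir(\p^n)_{\le d}$ is immediate from the definition of $H_d$.

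For items (4) and (5), which I expect to be the main obstacle, the key point is to understand the fibers of $\pi_d$. Two tuples $h,h'\in H_d$ give the same element of $\Bir(\p^n)$ iff $\psi_h=\psi_{h'}$ as rational maps, i.e.\ iff $(h_0:\dots:h_n)$ and $(h'_0:\dots:h'_n)$ agree after removing common factors, which means $h'_i = g\cdot h_i$ for a common homogeneous polynomial $g$ of the appropriate degree. For item (4), given $\varphi\in\Bir(\p^n)_{\le d}$ of some exact degree $e\le d$, the fiber $(\pi_d)^{-1}(\varphi)$ consists of all tuples obtained by multiplying a fixed reduced representative by an arbitrary homogeneous polynomial $g$ of degree $d-e$; this is the image of a linear map between the coefficient spaces, hence a linear subspace of $W_d\cong\p^r$, so it is closed. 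For item (5), I would combine this fiber description with item (4): since $\pi_m(F)$ is a set of birational maps, I would show $(\pi_d)^{-1}(\pi_m(F))$ is obtained from $F$ by the ``multiply by a homogeneous polynomial of degree $d-m$'' operation, which is the image of $F\times W_{d-m}'$ (where $W_{d-m}'$ parametrizes the multipliers) under a morphism to $W_d$, followed by intersecting with $H_d$. The delicate part is that $\pi_m(F)$ may contain maps of degree strictly smaller than $m$, so that $F$ does not surject cleanly; I would handle this by stratifying $F$ according to the actual degree and checking closedness on each stratum, using that the multiplication maps are proper (being induced by morphisms of projective varieties) so that their images are closed. The properness of these multiplication/composition maps on projective fibers is what makes the whole argument go through.
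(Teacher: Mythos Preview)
Your treatment of (1) and (4) matches the paper; for (4), your linear-subspace description and the paper's equations $g_ih_j=h_ig_j$ are the same observation. For (2) and (3) you are on the right track but underspecify the key steps. In (2), the paper realises your ``elimination-theoretic condition'' concretely: it sets $Y\subseteq W_{d^{n-1}}\times W_d$ to be the closed set of pairs $(g,f)$ with $g\circ f$ a scalar multiple of the identity, uses completeness of $W_{d^{n-1}}$ to conclude that $p_2(Y)$ is closed, and then $H_d=p_2(Y)\cap U$ where $U$ is the open Jacobian-nonvanishing locus. In (3), Definition~\ref{Defi:Family} requires the universal $H_d$-rational map to restrict to an \emph{isomorphism} between opens with surjective projection to $H_d$; the paper actually verifies this by showing the map is an open immersion on the Jacobian-nonvanishing locus (radicial plus \'etale), and you should not skip that.

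The real problem is (5). Your stratification of $F$ by exact degree yields only \emph{locally closed} pieces $F_e$, and the image of a locally closed set under a proper map need not be closed; nor need a union of such images be closed. Worse, to set up the correct multiplication map on $F_e$ you must first extract the degree-$e$ reduced representative from each $h\in F_e\subseteq H_m$, and showing that this extraction is a morphism is precisely Lemma~\ref{Lem:Hd1d2}, which is proved later using the present lemma---so your argument is circular as stated. The paper sidesteps all of this by never reducing: the condition ``$f\in W_d$ and $h\in W_m$ define the same rational map'' is expressed by the closed bilinear equations $f_ih_j=f_jh_i$, uniformly in the actual degree. One then forms the closed set $Y_F=\{((g,f),h)\in Y\times\overline{F}:f_ih_j=f_jh_i\}\subseteq W_{d^{n-1}}\times W_d\times W_m$, projects to $W_d$ (a closed map, since the other two factors are complete), and intersects with $U$ to obtain $(\pi_d)^{-1}(\pi_m(F))$. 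A single projection handles all degrees at once.
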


\begin{proof}
Assertion $(1)$ follows from the fact that the set of homogeneous polynomials of degree $d$ in $n+1$ variables is equal to a $\kk$-vector space of dimension
${\left(\begin{array}{c}d+n \\ d \end{array}\right)}$.

$(2)$ We denote by $Y\subseteq  W_{d^{n-1}} \times W_d$ the set consisting of elements $(g,f)$, such that $h:=(g_0(f_0,\ldots,f_n) , \ldots , g_n(f_0,\ldots,f_n) )$ is a multiple (maybe $0$) of the identity, i.e. $h_ix_j=h_jx_i$ for all $i,j$. This set of equalities yields the existence of a homogeneous polynomial $a$ of degree $d^n-1$ such that $h_i=ax_i$ for each $i$. If $a$ is non-zero, then $\psi_f$ and $\psi_g$ are birational, and inverses of each other. If $a$ is zero, then $\psi_f$ contracts the entire set $\mathbb{P}^n$ onto a strict subvariety, which is included in the set where $g_i=0$ for each $i$. In particular, for any $(g,f)\in Y$, the element $\psi_f$ is birational if and only if its Jacobian is not zero (the Jacobian is the determinant of the matrix obtained from the partial derivatives of the components of $f$).

Since the inverse of every birational map of $\p^n$ of degree $d$ has degree $\le d^{n-1}$ (\cite[Theorem 1.5, page 292]{BCW}), any element $f\in H_d$ corresponds to (at least) one pair $(g,f)$ in $Y$.

The description of $Y$ shows that it is closed in $ W_{d^{n-1}} \times W_d $. Since $W_{d^{n-1}}$ is a complete variety, the projection $p_2 \colon W_{d^{n-1}} \times W_d  \to W_d$ is a Zariski-closed morphism, so $p_2(Y)$ is closed in $W_d$.  Denote by $U\subseteq W_d$ the open set of elements having a non-zero Jacobian.  By construction, we have $H_d=U\cap p_2(Y)$. This implies that $H_d$ is locally closed in $W_d$ (and that it is closed in $U$).

$(3)$ Let $f\colon H_d \times \p^n\dasharrow H_d\times \p^n$ be the $H_d$-rational map given by
$$(h,x) \dasharrow (h, h(x)).$$ 
Denote by $J$ the polynomial which is the determinant of the matrix $(\partial h_i/\partial x_j)_{i,j=0}^n$, and let $V\subseteq H_d\times \p^n$ be the open set where $J$ is not zero. We claim that the restriction of $f$ to $V$ is an open immersion, which will show that $\pi_d\colon H_d\to \Bir(\p^n)$ is a morphism. To prove this claim, we need to see that $f$ is radicial (i.e.\ universally injective) and \'etale \cite[Th\'eor\`eme 17.9.1, page 79]{Gro}. For any extension $K$ of $\kk$, and any point $h\in H_d(K)$, there exists $h'\in H_{m}(K)$ without common factors such that the composition $h'\circ h$ is $(x_0R:\dots:x_nR)$ for some non-zero polynomial $R$. The hypersurface $R=0$ is contracted by $h$ onto the set of base-points of $h'$, with codimension $\ge 2$, so $R$ can be expressed as a product of some factors of the Jacobian of $h$. This shows that the extension of $f$ to $K$ restricts to an open immersion on $V(K)\cap (\{h\} \times \p^n)$. In particular, $f$ is radicial.
The fact that the projection $H_d\times \p^n\to H_d$ is smooth and that the derivative with respect to $x$ is injective at any point of $V$ implies that $f$ is \'etale on $V$ \cite[Corollaire 17.11.2, page 84]{Gro}. This completes the proof of the claim. It then follows from the construction of $H_d$ that the image of $\pi_d$ is the set of birational transformations of degree $\le d$.

$(4)$ Let $\varphi$ be an element of $\Bir(\p^n)_{\le d}$. It corresponds to a birational map 
$\psi_h\colon \p^n\dasharrow \p^n$ given by $(x_0:\dots:x_n)\dasharrow
(h_0(x_0,\dots,x_n):\dots:h_n(x_0,\dots,x_n))$, for some homogeneous polynomials of degree $k\le d$, having no common divisor.

We observe that $(\pi_d)^{-1}(\varphi)\subseteq H_d$ is the set of elements $(g_0:\dots:g_n)$ of $W_d$ satisfying  $g_i h_j=h_ig_j$ for all $i,j$. This set is therefore closed in $W_d$, and hence in $H_d$.

$(5)$ For any positive integer $m$ and any closed subset $F \subseteq H_m$, we denote by $Y_F$ the subset of $Y\times \overline{F}$ (where $Y\subseteq W_{d^{n-1}}\times  W_d$ is as above and $\overline{F}$ is the closure of $F$ in $W_m$) consisting of elements $((g,f),h)$ where $f=(f_0:\dots:f_n)$ and $h=(h_0:\dots:h_n)$ yield the same map  $\p^n\dasharrow \p^n$. This corresponds to saying that $f_ih_j=f_jh_i$ for all $i,j$, hence $Y_F$ is  closed in  $Y\times \overline{F}$, and also in $W_{d^{n-1}}  \times  W_d\times W_m$.
Let $p_2 :W_{d^{n-1}}  \times  W_d \times W_m \to  W_d $ be the second projection. The subset $p_2 (Y_F)$ of  $W_d$ is closed in
$W_d$ and also in $p_2(Y)$. Intersecting with $U$, we see that $p_2(Y_F)\cap U$ is closed in $p_2(Y)\cap U=H_d$. By construction, $p_2(Y_F)\cap U= (\pi_d)^{-1}(\pi_m(F))$, which is thus closed in $H_d$.
\end{proof}

\begin{remark}
Lemma~\ref{lem:WHalgebraic} shows that $W_d,H_d$ are naturally algebraic varieties.  We will show that the same holds for the set $\Bir(\p^n)_{d}$ of birational transformations of degree $d$, but not for $\Bir(\p^n)_{\le d}$ which cannot be viewed as an algebraic variety for $d,n\ge 2$. However, the topology of $ \Bir(\p^n)_{\le d}$ is given by the map $\pi_d \colon H_d \to \Bir(\p^n)_{\le d}$, in the sense that $\pi_d$ is a topological quotient map.  \end{remark}

\subsection{\boldmath Liftings of morphisms from $\Bir(\p^n)$ to the $H_d$ and description of the Zariski topology}
The following technical lemma will be used to deduce three corollaries that describe the topology of $\Bir(\p^n)$.

\begin{lemma}\label{Lem:ReleveLocalBirPn}
Let $A$ be an irreducible algebraic variety and $\rho\colon A\to \Bir(\p^n)$ be a morphism. There exists an open affine covering $(A_i)_{i \in I}$ of $A$ such that for each $i$, there exists an integer $d_i$ and a morphism $\rho_i \colon A_i\to H_{d_i}$  such that the restriction of $\rho$ to $A_i$ is equal to $\pi_{d_i}\circ \rho_i$. \end{lemma}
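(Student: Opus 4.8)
The plan is to extract the polynomials representing $\rho$ from its generic fibre, and to show that, after clearing denominators, they already define a \emph{global} morphism into a single $H_d$; the covering by affine opens and the freedom in the $d_i$ allowed by the statement are then only a convenience. First I would unwind Definition~\ref{Defi:Family}: the morphism $\rho$ is encoded by an $A$-birational map $f$ of $A\times\p^n$, and, writing $F\colon A\times\p^n\dasharrow\p^n$ for the map $(a,x)\mapsto p_2(f(a,x))$, the element $\rho(a)$ is the restriction of $F$ to the fibre $\{a\}\times\p^n$. Passing to the generic point of $A$, with $K=\kk(A)$, the generic fibre of $F$ is the birational map $\rho_\eta\in\Bir(\p^n_K)$, given by coprime homogeneous polynomials $h_0,\dots,h_n\in K[x_0,\dots,x_n]$ of a common degree $d$ (the generic degree of $\rho$), unique up to a common factor in $K^\ast$. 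Reading off their coefficients defines a rational map $\sigma\colon A\dasharrow W_d=\p^{r}$, with $r$ as in Lemma~\ref{lem:WHalgebraic}(1). On the open locus where $\sigma$ is defined it is a morphism of varieties whose image lies in $H_d$, since each $\sigma(a)$ is a homogeneous representative of $\rho(a)$; and there $\pi_d\circ\sigma=\rho$, because both sides restrict fibrewise to the same rational map $F$. This gives the required lift over a dense open, with degree $d$.

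The heart of the matter is to show that $\sigma$ has \emph{empty} indeterminacy locus, so that this single lift extends over all of $A$. Fix $z_0\in A$ and set $R=\mathcal O_{A,z_0}$, a local domain with fraction field $K$. Clearing denominators yields an integral representative $\hat h_i\in R[x]$, and, since the $h_i$ are coprime over $K$, every common factor of the $\hat h_i$ in $R[x]$ has degree $0$ in $x$; hence $\sigma$ is regular at $z_0$ if and only if the ideal $\mathfrak a\subseteq R$ generated by all coefficients of the $\hat h_i$ is principal. If it were not principal then, being a proper ideal of the local ring $R$, it would lie in $\mathfrak m_{z_0}$, so that all these coefficients vanish at $z_0$; no rescaling by $K^\ast$ nor multiplication of the tuple by a polynomial in $x$ (which only shrinks $\mathfrak a$) could repair this, and the whole fibre $\{z_0\}\times\p^n$ would lie in the base locus of $F$, making $\rho(z_0)$ \emph{undefined}. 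But $\rho$ is a morphism, so $\rho(z_0)\in\Bir(\p^n)$ for every $z_0$: the isomorphism $U\to V$ of Definition~\ref{Defi:Family} has $U,V$ surjecting onto $A$, so $U\cap(\{z_0\}\times\p^n)$ is a non-empty, hence dense, open subset of the irreducible fibre, and $f$ restricts there to a birational map. This contradiction forces $\mathfrak a$ to be principal at each $z_0$, so $\sigma$ is everywhere regular and defines a morphism of varieties $\sigma\colon A\to H_d$ with $\pi_d\circ\sigma=\rho$.

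To conclude I would take any covering of $A$ by affine open subsets $A_i$ and set $d_i=d$, $\rho_i=\sigma|_{A_i}$; each $\rho_i\colon A_i\to H_{d_i}$ is a morphism of varieties and $\pi_{d_i}\circ\rho_i=\rho|_{A_i}$ by Lemma~\ref{lem:WHalgebraic}(3), which proves the statement (and shows that a single degree $d$ in fact suffices, the freedom in the $d_i$ being a convenience; the affine covering is needed only for the ``affine'' clause). The main obstacle is precisely the extension step of the second paragraph: the generic representation only manifestly lifts $\rho$ over a dense open, and the tempting remedy of switching to other degrees on the remaining degeneration strata does not directly produce \emph{open} subsets of $A$, only locally closed ones. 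The decisive point is instead the dictionary relating indeterminacy of $\sigma$ at $z_0$ to non-principality of the coefficient ideal $\mathfrak a$, and thence to the whole fibre lying in the base locus of $F$; I expect verifying that the common factors of the $\hat h_i$ carry no dependence on $x$, and controlling this base locus fibre by fibre, to be the most delicate bookkeeping.
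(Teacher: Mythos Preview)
Your proposal asserts something strictly stronger than the lemma: a single \emph{global} lift $\sigma\colon A\to H_d$ with $d$ equal to the generic degree of $\rho$. This stronger claim is false, and the paper itself supplies the counterexample. In Example~\ref{Example:Nodal} the nodal cubic $C$ carries a morphism $\rho=\pi_3\circ\rho_3\colon C\to\Bir(\p^n)$ with $\rho(a:b:c)\in\Bir(\p^n)_2$ for every $(a:b:c)\neq(0:0:1)$, so the generic degree is $2$; yet the lemma immediately following that example proves that there is no morphism $\rho_2\colon C\to H_2$ with $\pi_2\circ\rho_2=\rho$. Hence your $\sigma$ cannot be everywhere regular in general.

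The gap is in your second paragraph. You correctly observe that if the coefficient ideal $\mathfrak a\subseteq R=\mathcal O_{A,z_0}$ of the tuple $(\hat h_0,\dots,\hat h_n)$ is non-principal, then this particular degree-$d$ representation vanishes identically on the fibre $\{z_0\}\times\p^n$. But your conclusion that ``the whole fibre would lie in the base locus of $F$'' does not follow: the base locus of the rational map $F$ is where \emph{every} local representation vanishes, whereas you have only ruled out representations of the form $P\cdot(\hat h_0,\dots,\hat h_n)$ with $P\in K^\ast$ or $P\in R[x]$ (your parenthetical ``which only shrinks $\mathfrak a$'' tacitly assumes integral coefficients). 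A polynomial $P\in K[x]\setminus R[x]$ of positive $x$-degree can nevertheless produce a tuple $P\hat h_i\in R[x]$ whose coefficient ideal is the unit ideal. Concretely, in the nodal cubic one passes from the degree-$2$ tuple to the degree-$3$ tuple of $\rho_3$ by multiplying by $(bx_2+ax_0)/(ab)$, whose coefficients $1/a,\,1/b$ lie in $K\setminus R$; the result is regular at the node. The paper's proof avoids this issue by working genuinely locally: at each $a_0\in A$ it chooses a point $w_0=(a_0,y)\in U$, writes $f$ in affine coordinates near $w_0$, and homogenises to obtain a representation of some degree $d_0$ (possibly larger than the generic degree) that does not vanish at $w_0$; the open set where this representation is nonzero then provides the required affine neighbourhood of $a_0$.
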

\begin{proof}
Let $\tau\colon A\to \Bir(\p^n)$ be a morphism given by a $A$-birational map $f\colon A\times \p^n\dasharrow A\times \p^n$, which restricts to an open immersion on an open set $U$. 
Let $a_0\in A$ be some given point and let $A_0\subseteq A$ be an open affine set containing $a_0$. We also fix an element $w_0=(a_0,y)\in U$, and fix homogeneous coordinates $(x_0:\dots:x_n)$ on $\p^n$ such that $y=(1:0:\dots:0)$ and that $f(w_0)$ does not belong to the plane $x_0=0$. We then denote by $\A^n\subseteq \p^n$ the affine set where $x_0=1$, which has natural affine coordinates $z_1=\frac{x_1}{x_0},\dots,z_n=\frac{x_n}{x_0}$. 
In these coordinates, $f$ restricts to a rational map $A_0\times \A^n\dasharrow \A^n$ which is defined at $w_0$. Its composition with the projection on the $i$-th coordinate is a rational function on $A_0\times \A^n$, which is defined at $w_0$. We obtain  that $f|_{A_0\times \A^n}$ can be written, in a neighbourhood of $w_0$, as 
$(a,(z_1,\dots,z_n))\mapsto \left(\frac{P_1}{Q_1},\dots,\frac{P_n}{Q_n}\right)$
for some $P_i,Q_i\in \kk[A_0][z_1,\dots,z_n]$ such that none of the $Q_i$ vanish at $w_0$. Homogenising the description, we see that $f$ is given, in a neighbourhood of $w_0$, by
$$(a,(x_0:\dots : x_n))\mapsto \left(h_0:\dots:h_n\right)$$
where the $h_i\in \kk[A_0][x_0,\dots,x_n]$ are homogeneous polynomials in $x_0,\dots,x_n$ of the same degree $d_0$, such that not all vanish at $w_0$.
Let $U_0$  be the set  of points of  $(A_0\times \p^n)\cap U$ where at least one of the $h_i$ does not vanish. It is an open subset of $A\times \p^n$.
Its projection $p_1(U_0)$ on $A$ is an open subset of $A_0$ containing $a_0$.
Therefore, there exists an affine open subset $\tilde{A}_0\subseteq p_1(U_0)$ containing $a_0$.
The $n$-uple $(h_0,\dots,h_n)$ gives rise to a morphism $\rho_0\colon\tilde{A}_0\to H_d$. By construction, the restriction of $\rho$ to $\tilde{A}_0$ is equal to $\pi_{d}\circ \rho_0$. 
Repeating the process for each point of $A$, we obtain an open affine covering.
\end{proof}

\begin{corollary}\label{Cor:ClosedPn}
A set $F\subseteq \Bir(\p^n)$ is closed if and only if $(\pi_d)^{-1}(F)$ is closed in $H_d$ for any $d\ge 1$. 
\end{corollary}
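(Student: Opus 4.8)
The plan is to prove the two implications separately, with the forward direction being essentially immediate and the backward direction relying entirely on the local lifting provided by Lemma~\ref{Lem:ReleveLocalBirPn}.

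For the forward direction, I would first recall that $\pi_d\colon H_d\to \Bir(\p^n)$ is itself a morphism in the sense of Definition~\ref{Defi:Family}, by Lemma~\ref{lem:WHalgebraic}(3). Hence, if $F$ is closed in $\Bir(\p^n)$, then applying Definition~\ref{defi: Zariski topology} of the Zariski topology to the particular variety $A=H_d$ and the particular morphism $\rho=\pi_d$ shows at once that $(\pi_d)^{-1}(F)$ is closed in $H_d$, for every $d\ge 1$. There is nothing more to do here.

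For the backward direction, I would assume that $(\pi_d)^{-1}(F)$ is closed in $H_d$ for all $d\ge 1$, and let $\rho\colon A\to \Bir(\p^n)$ be an arbitrary morphism; it suffices to prove that $\rho^{-1}(F)$ is closed in $A$. Decomposing $A$ into its finitely many irreducible components (and checking closedness on each), I may assume $A$ irreducible, so that Lemma~\ref{Lem:ReleveLocalBirPn} applies and yields an open affine covering $(A_i)_{i\in I}$ of $A$, integers $d_i$, and morphisms of algebraic varieties $\rho_i\colon A_i\to H_{d_i}$ with $\rho|_{A_i}=\pi_{d_i}\circ \rho_i$. For each $i$ this gives
$$\rho^{-1}(F)\cap A_i=(\rho|_{A_i})^{-1}(F)=\rho_i^{-1}\big((\pi_{d_i})^{-1}(F)\big),$$
and since $\rho_i$ is a morphism of algebraic varieties it is continuous for the Zariski topology, while $(\pi_{d_i})^{-1}(F)$ is closed in $H_{d_i}$ by hypothesis; hence the right-hand side is closed in $A_i$.

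The concluding step is the standard observation that closedness is local for an open covering: a subset whose intersection with each member of an open cover is closed in that member is itself closed. Applying this to $\rho^{-1}(F)$ together with the cover $(A_i)_{i\in I}$ shows that $\rho^{-1}(F)$ is closed in $A$, which is exactly what was needed. I do not expect a genuine obstacle in this argument; the only point requiring care is the reduction to the local statement (and the handling of a possibly reducible $A$), after which everything is a direct unwinding of the definitions, with the real work already packaged inside Lemma~\ref{Lem:ReleveLocalBirPn}.
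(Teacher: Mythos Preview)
Your proof is correct and follows essentially the same approach as the paper's own proof: both directions are handled identically, using Lemma~\ref{lem:WHalgebraic}(3) for the forward implication and the local lifting from Lemma~\ref{Lem:ReleveLocalBirPn} together with the open cover argument for the converse. The only difference is your preliminary reduction to irreducible $A$, which is harmless but unnecessary here since Definition~\ref{Defi:Family} already requires $A$ to be irreducible.
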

\begin{proof}
By definition, $F$ is closed in $\Bir(\p^n)$ if and only if its preimage by any morphism is closed. Because each $\pi_d$ is a morphism, we see that $(\pi_d)^{-1}(F)$ is closed in $H_d$ if $F$ is closed in $\Bir(\p^n)$.

Suppose now that $(\pi_d)^{-1}(F)$ is closed in $H_d$ for each $d$, and let us prove that $F\subseteq \Bir(\p^n)$ is closed. We let $\rho\colon A\to \Bir(\p^n)$ be a morphism. We apply
Lemma~\ref{Lem:ReleveLocalBirPn} and obtain an open affine covering $(A_i)_{i \in I}$ of $A$ such that for each $i$, there exist an integer $d_i$ and a morphism $\rho_i \colon A_i\to H_{d_i}$  such that the restriction of $\rho$ to $A_i$ is equal to $\pi_{d_i}\circ \rho_i$. Since $\pi_{d_i}^{-1}(F)$ is closed and $\rho^{-1}(F)\cap A_i$ is equal to $(\rho_i)^{-1}(\pi_{d_i}^{-1}(F))$, we obtain that $\rho^{-1}(F)\cap A_i$ is closed in $A_i$ for each $i$. Consequently, $\rho^{-1}(F)$ is closed in $A$, as required.
\end{proof}
\begin{corollary}\label{Cor:Degreleclosed}
For any $d$, the set $\Bir(\p^n)_{\le d}$ of birational transformations of degree $\le d$ is closed in $\Bir(\p^n)$.
\end{corollary}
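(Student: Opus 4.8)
The plan is to deduce the corollary from the closedness criterion of Corollary~\ref{Cor:ClosedPn} together with part $(5)$ of Lemma~\ref{lem:WHalgebraic}. Fix $d\ge 1$. By Corollary~\ref{Cor:ClosedPn}, the set $\Bir(\p^n)_{\le d}$ is closed in $\Bir(\p^n)$ exactly when its preimage $(\pi_e)^{-1}(\Bir(\p^n)_{\le d})$ is closed in $H_e$ for every $e\ge 1$. So the entire task is to verify the closedness of each of these preimages.

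First I would rewrite $\Bir(\p^n)_{\le d}$ as an image under one of the maps $\pi$, so that Lemma~\ref{lem:WHalgebraic} becomes applicable. By part $(3)$ of that lemma, the image of $\pi_d\colon H_d\to \Bir(\p^n)$ is precisely $\Bir(\p^n)_{\le d}$; that is, $\Bir(\p^n)_{\le d}=\pi_d(H_d)$. Hence, for each $e\ge 1$, the set we must show to be closed is $(\pi_e)^{-1}(\pi_d(H_d))$.

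This is now an immediate instance of Lemma~\ref{lem:WHalgebraic}$(5)$, once its indices are matched correctly. In that assertion the pair of degrees $(d,m)$ ranges over all positive integers, and it states that $(\pi_d)^{-1}(\pi_m(F))$ is closed in $H_d$ for every closed subset $F\subseteq H_m$. Specialising the pair $(d,m)$ to $(e,d)$ and taking $F=H_d$, which is trivially a closed subset of $H_d$, yields that $(\pi_e)^{-1}(\pi_d(H_d))$ is closed in $H_e$ for every $e\ge 1$. Combined with the previous two paragraphs, Corollary~\ref{Cor:ClosedPn} then gives that $\Bir(\p^n)_{\le d}$ is closed, as desired.

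The argument involves no geometric difficulty of its own; the one point that demands attention is precisely this index matching in Lemma~\ref{lem:WHalgebraic}$(5)$, where the degree at which closedness is asserted is left free while the degree of the chosen closed set $F$ must be pinned to the fixed $d$ — the opposite of the naive reading. I expect no further obstacle, since the genuinely delicate content (the degeneration of degree-$d$ maps into maps of smaller degree, together with the control on inverses via the variety $Y$) has already been handled inside the proof of Lemma~\ref{lem:WHalgebraic}$(5)$.
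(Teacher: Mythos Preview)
Your proof is correct and follows essentially the same approach as the paper: reduce via Corollary~\ref{Cor:ClosedPn} to closedness of $(\pi_e)^{-1}(\Bir(\p^n)_{\le d})=(\pi_e)^{-1}(\pi_d(H_d))$ in each $H_e$, and then invoke Lemma~\ref{lem:WHalgebraic}(5) with $F=H_d$. The paper's own proof is the same argument in a single sentence; your version simply makes the index specialisation explicit.
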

\begin{proof}
By Corollary~\ref{Cor:ClosedPn}, it suffices to see that $(\pi_m)^{-1}(\Bir(\p^n)_{\le d})=(\pi_m)^{-1}(\pi_d(H_d))$ is closed in $H_m$ for any $m$. This follows from Lemma~\ref{lem:WHalgebraic}. 
\end{proof}

\begin{corollary}\label{Coro:QuotientBirPnd}
For any $d$, the map $\pi_d\colon H_d\to \Bir(\p^n)_{\le d}$ is surjective, continuous and closed.
In particular, it is a topological quotient map.
\end{corollary}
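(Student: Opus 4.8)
The plan is to establish the three properties---surjectivity, continuity and closedness---one at a time, and then appeal to the standard topological fact that a surjective, continuous, closed map is automatically a quotient map. Surjectivity is immediate from Lemma~\ref{lem:WHalgebraic}(3), which identifies the image of $\pi_d$ with exactly $\Bir(\p^n)_{\le d}$. For continuity, I would again use that $\pi_d$ is a morphism in the sense of Definition~\ref{Defi:Family} (Lemma~\ref{lem:WHalgebraic}(3)): by the very definition of the Zariski topology on $\Bir(\p^n)$ (Definition~\ref{defi: Zariski topology}), the preimage under any morphism of a closed set is closed, so $\pi_d\colon H_d\to\Bir(\p^n)$ is continuous. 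Since its image lies in $\Bir(\p^n)_{\le d}$, equipped with the subspace topology, the corestriction $\pi_d\colon H_d\to\Bir(\p^n)_{\le d}$ is continuous as well.

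The one step requiring care is closedness. Let $F\subseteq H_d$ be closed; the goal is that $\pi_d(F)$ is closed in $\Bir(\p^n)_{\le d}$. Because $\Bir(\p^n)_{\le d}$ is itself closed in $\Bir(\p^n)$ by Corollary~\ref{Cor:Degreleclosed}, it suffices to show that $\pi_d(F)$ is closed in all of $\Bir(\p^n)$. By Corollary~\ref{Cor:ClosedPn}, this reduces to verifying that $(\pi_m)^{-1}(\pi_d(F))$ is closed in $H_m$ for every $m\ge 1$. This is precisely what Lemma~\ref{lem:WHalgebraic}(5) delivers: although that lemma is stated for a fixed target index (the $d$ of $\pi_d\colon H_d\to\cdots$) with the closed set living in an auxiliary $H_m$, it is valid for every choice of the two degrees. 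Applying it with the roles interchanged---taking the fixed index to be $m$ and the given closed set $F$ to live in $H_d$---yields exactly that $(\pi_m)^{-1}(\pi_d(F))$ is closed in $H_m$, for each $m$. I expect the only subtlety to be bookkeeping, namely recognising that Lemma~\ref{lem:WHalgebraic}(5) is symmetric enough in its two indices to supply closedness of the preimage under every $\pi_m$ simultaneously.

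Having shown that $\pi_d$ is surjective, continuous and closed, the final assertion follows from the general principle that such a map is a topological quotient map: a subset of $\Bir(\p^n)_{\le d}$ is closed if and only if its preimage in $H_d$ is closed, one direction being continuity and the other following from surjectivity together with the closed-map property.
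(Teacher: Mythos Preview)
Your proposal is correct and follows essentially the same route as the paper's proof: surjectivity and continuity from Lemma~\ref{lem:WHalgebraic}(3), and closedness by combining Lemma~\ref{lem:WHalgebraic}(5) (with the two degree indices swapped) with Corollary~\ref{Cor:ClosedPn}. One small remark: your invocation of Corollary~\ref{Cor:Degreleclosed} is unnecessary, since a subset closed in $\Bir(\p^n)$ is automatically closed in any subspace containing it, regardless of whether that subspace is itself closed.
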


\begin{proof}
The surjectivity follows from the construction of $H_d$ and $\pi_d$ (see Lemma~\ref{lem:WHalgebraic}). The continuity of $\pi_d$ follows from the fact that it is a morphism in the sense of Definition~$\ref{Defi:Family}$ (Lemma~\ref{lem:WHalgebraic}). Let $F\subseteq H_d$  be closed. By Lemma~\ref{lem:WHalgebraic}, $(\pi_m)^{-1}(\pi_d(F))$ is closed in $H_m$ for any $m$, so $\pi_d(F)$ is closed in $\Bir(\p^n)$ (Corollary~\ref{Cor:ClosedPn}). 
\end{proof}

\begin{proposition}\label{Prop:MainTop}
The topology of $\Bir(\p^n)$ is the inductive limit topology given by the Zariski topologies of $\Bir(\p^n)_{\le d}$, $d\in \mathbb{N}$, which are the quotient topology of $\pi_d\colon H_d\to\Bir(\p^n)_{\le d}$, where $H_d$ is endowed with its Zariski topology.
\end{proposition}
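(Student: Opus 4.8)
The plan is to deduce the statement almost formally from the three preceding corollaries, which already contain all the geometric content. There are two things to check: first, that for each $d$ the Zariski topology on $\Bir(\p^n)_{\le d}$ coincides with the quotient topology induced by $\pi_d\colon H_d\to \Bir(\p^n)_{\le d}$; and second, that the topology of $\Bir(\p^n)$ is the inductive limit of these.

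The first point is precisely Corollary~\ref{Coro:QuotientBirPnd}. Since $\Bir(\p^n)_{\le d}$ is closed in $\Bir(\p^n)$ by Corollary~\ref{Cor:Degreleclosed}, we may unambiguously endow it with the subspace topology, and the corollary states that $\pi_d$ is a continuous, surjective and closed map onto it; hence it is a topological quotient map, identifying the Zariski topology of $\Bir(\p^n)_{\le d}$ with the quotient topology of $\pi_d$.

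For the second point, recall that a subset $F\subseteq \Bir(\p^n)$ is closed in the inductive limit topology exactly when $F\cap \Bir(\p^n)_{\le d}$ is closed in $\Bir(\p^n)_{\le d}$ for every $d$. Using that $\pi_d$ is a quotient map with image $\Bir(\p^n)_{\le d}$, the set $F\cap \Bir(\p^n)_{\le d}$ is closed in $\Bir(\p^n)_{\le d}$ if and only if $(\pi_d)^{-1}(F\cap \Bir(\p^n)_{\le d})$ is closed in $H_d$; and because the image of $\pi_d$ is contained in $\Bir(\p^n)_{\le d}$, this preimage equals $(\pi_d)^{-1}(F)$. Therefore $F$ is closed in the inductive limit topology if and only if $(\pi_d)^{-1}(F)$ is closed in $H_d$ for every $d$. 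By Corollary~\ref{Cor:ClosedPn}, this is exactly the characterisation of the closed sets of the Zariski topology on $\Bir(\p^n)$, so the two topologies agree.

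The argument is essentially formal, and I do not expect a serious obstacle. The only points requiring care are the elementary but crucial identity $(\pi_d)^{-1}(F)=(\pi_d)^{-1}(F\cap \Bir(\p^n)_{\le d})$, which rests on the surjectivity of $\pi_d$ onto $\Bir(\p^n)_{\le d}$, and keeping the various subspace topologies consistent: since each $\Bir(\p^n)_{\le d}$ is closed in $\Bir(\p^n)$, the inclusions $\Bir(\p^n)_{\le d}\hookrightarrow \Bir(\p^n)_{\le d+1}$ are closed embeddings and, by transitivity of the subspace topology, the inductive limit is genuinely formed from the spaces $\Bir(\p^n)_{\le d}$ carrying the topology described in the first step.
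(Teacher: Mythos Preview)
Your proposal is correct and follows essentially the same approach as the paper, which simply cites Corollaries~\ref{Cor:ClosedPn} and~\ref{Coro:QuotientBirPnd}; you have merely spelled out the formal details and additionally invoked Corollary~\ref{Cor:Degreleclosed} to justify that the subspace topologies on the $\Bir(\p^n)_{\le d}$ are consistent.
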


\begin{proof}
This follows from Corollaries~\ref{Cor:ClosedPn} and \ref{Coro:QuotientBirPnd}.
\end{proof}

\subsection{More about the liftings}
Let us show that the liftings to $H_d$ do not behave as simply as they might seem to.

\begin{example}\label{Example:Nodal}
Let $C\subseteq \p^2$ be the nodal cubic given by $abc=a^3+b^3$, where $(a:b:c)$ are homogeneous coordinates on $\p^2$ and let $\rho_3\colon C\to H_3$ be given by
\begin{center}$(a:b:c)\mapsto (x_0R:x_1S:x_2R:\dots:x_nR),$
$\begin{array}{ccc}
R=ax_2^2+cx_0x_2+bx_0^2,&&
S=ax_2^2+(b+c)x_0x_2+(a+b)x_0^2.\end{array}$\end{center}
\end{example}
\begin{lemma}
The morphism $\rho_3$ of Example~$\ref{Example:Nodal}$ has the following properties:
\begin{enumerate}
\item
The morphism $\rho=\pi_3\circ \rho_3\colon C\to \Bir(\p^n)$ satisfies $\rho(C)\subseteq \Bir(\p^n)_{\le 2}$.
\item
There is no morphism $\rho_2 \colon C\to H_2$ such that $\pi_2\circ \rho_2=\rho$.
\end{enumerate}
\end{lemma}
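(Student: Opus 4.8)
The plan is to handle (1) by an explicit factorisation and (2) by passing to the normalisation of $C$ and comparing the two branches meeting at the node.

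For (1), I would first exhibit a common factor of $R$ and $S$ that drops the degree to $2$ away from the node. Setting $L = ax_0 + bx_2$, a direct computation using the defining relation $a^3 + b^3 = abc$ of $C$ gives, on the open set $\{ab \neq 0\}$,
\[
R = \tfrac{1}{ab}\, L\,(b^2 x_0 + a^2 x_2), \qquad S = \tfrac{1}{ab}\, L\,\bigl((ab + b^2)x_0 + a^2 x_2\bigr).
\]
Hence the degree-$3$ tuple $\rho_3(a:b:c)$ is divisible by $L$, and cancelling it yields the degree-$2$ representative $h(a:b:c) = (x_0 \tilde R : x_1 \tilde S : x_2 \tilde R : \dots : x_n \tilde R)$ with $\tilde R = b^2 x_0 + a^2 x_2$ and $\tilde S = (ab+b^2)x_0 + a^2 x_2$. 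I would then check that this tuple is reduced (all components share $\tilde R$ except the $x_1$-slot, and $\tilde R \nmid x_1\tilde S$ since $a \neq 0$), so that $\rho(a:b:c)$ has degree exactly $2$ there. Thus $\rho$ maps the dense open set $C \setminus \{(0:0:1)\}$ into $\Bir(\p^n)_{\le 2}$; since this set is closed (Corollary~\ref{Cor:Degreleclosed}), $\rho$ is continuous, and $C$ is irreducible, the inclusion $\rho(C) \subseteq \Bir(\p^n)_{\le 2}$ follows. At the node itself one has $R = S = x_0x_2$, so $\rho(0:0:1) = \id$.

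For (2), I would argue by contradiction: suppose a lift $\rho_2\colon C \to H_2$ with $\pi_2\circ\rho_2 = \rho$ exists. On $U := C\setminus\{(0:0:1)\}$ the map $\rho(p)$ has degree exactly $2$ with reduced representative $h(p)$, so any degree-$2$ tuple inducing the same map is free of common factors and hence a scalar multiple of $h(p)$ (via the proportionality relations $g_ih_j = g_jh_i$ used in Lemma~\ref{lem:WHalgebraic}); thus $\rho_2|_U = h$ as morphisms to $W_2$, and the only issue is whether $h$ extends across the node. Here the nodal nature of $C$ is decisive. I would use the normalisation $\nu\colon \p^1 \to C$, $m\mapsto (m:m^2:1+m^3)$, for which both $m=0$ and $m=\infty$ map to $(0:0:1)$. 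Substituting into $\tilde R, \tilde S$ and cancelling the resulting common factors ($m^2$ near $0$, and $\ell^2$ with $\ell = 1/m$ near $\infty$), one finds that $h\circ\nu$ extends to a morphism $\p^1 \to H_2$ with
\[
(h\circ\nu)(0) = (x_0x_2 : x_1x_2 : \dots : x_nx_2), \qquad (h\circ\nu)(\infty) = (x_0^2 : x_0x_1 : \dots : x_0x_n).
\]
These two points of $W_2$ are distinct (one has common factor $x_2$, the other $x_0$). If $\rho_2$ existed, then $\rho_2\circ\nu$ would be a morphism $\p^1\to W_2$ agreeing with $h\circ\nu$ on the dense open $\p^1\setminus\{0,\infty\}$, hence equal to it everywhere by separatedness; but then $\rho_2(0:0:1)$ would have to equal both displayed points, a contradiction.

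The calculations are routine once the scalings are pinned down; the main obstacle, and the conceptual heart of the statement, is the observation that $\rho(p)$ degenerates to the identity at the node along its \emph{two} branches in two genuinely different ways, producing the two distinct limits $x_2\cdot\id$ and $x_0\cdot\id$ in $H_2$. No single point of $H_2$ can then serve as the value of a lift at the node. This is precisely the obstruction that would disappear if $C$ were smooth, where $h$ would extend automatically.
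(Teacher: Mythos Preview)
Your proof is correct and follows essentially the same route as the paper's own argument: the same factorisation of $R,S$ by the linear form $ax_0+bx_2$ on $\{ab\neq 0\}$ for part~(1), and for part~(2) the same normalisation $\p^1\to C$, $(u:v)\mapsto(u^2v:uv^2:u^3+v^3)$ (your affine parameter $m$ is $v/u$), yielding the two distinct limits $x_2\cdot\id$ and $x_0\cdot\id$ in $H_2$ over the node. The only cosmetic differences are that the paper checks the value at the node directly rather than invoking closedness of $\Bir(\p^n)_{\le 2}$, and that you make the separatedness step ($\rho_2\circ\nu$ must agree with the explicit extension of $h\circ\nu$) explicit where the paper leaves it implicit.
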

\begin{proof}
The image of $(0:0:1)\in C$ under $\rho_3$ is equal to $(x_0(x_0x_2):x_1(x_0x_2):\dots:x_n(x_0x_2))\in W_3$, which corresponds to the identity.

On the open subset $C\backslash \{(0:0:1)\}$ of $C$, neither $a$ nor $b$ is zero, so the restriction of $\rho_3$ to $C\backslash \{(0:0:1)\}$ corresponds to 
\begin{center}
$(a:b:c)\mapsto (x_0R':x_1S':x_2R':\dots:x_nR'),$
$\begin{array}{llllll}
R'&=&abR&=&a^2bx_2^2+abcx_0x_2+ab^2x_0^2,\\
S'&=&abS&=&a^2bx_2^2+(ab^2+abc)x_0x_2+ab(a+b)x_0^2.\end{array}$\end{center}
We now observe  that $R'=a^2bx_2^2+(a^3+b^3)x_0x_2+ab^2x_0^2=(a^2x_2+b^2x_0)(bx_2+ax_0)$ and $S'=a^2bx_2^2+(a^3+b^3+ab^2)x_0x_2+ab(a+b)x_0^2=(a^2x_2+b(a+b)x_0)(bx_2+ax_0)$.

This shows that $\pi_3(\rho_3(a:b:c))$ has degree $2$ for any $(a:b:c)\in C\backslash \{(0:0:1)\}$.

We have thus proved $(1)$. To prove $(2)$, assume the existence of a morphism $\rho_2\colon C\to H_2$ such that $\pi_2\circ \rho_2=\rho$. Since $\pi_2$ is bijective on $(\pi_2)^{-1}(\Bir(\p^n)_2)$ and  $\rho(C\backslash \{(0:0:1)\})\subseteq \Bir(\p^n)_2$, the restriction of $\rho_2$ to $C\backslash \{(0:0:1)\}$ is given by 
\begin{center}$(a:b:c)\mapsto (x_0R'':x_1S'':x_2R'':\dots:x_nR''),$
$\begin{array}{ccc}
R''=a^2x_2+b^2x_0,&&
S''=a^2x_2+b(a+b)x_0.\end{array}$\end{center}
It remains to see that this morphism does not extend to $C$. Let $\varphi\colon \p^1 \to C$ be the birational morphism which sends $(u:v)$ onto $(u^2v:uv^2:u^3+v^3)$. The hypothetic morphism $\rho_2\circ\varphi\colon \p^1 \to H_2$ sends $(u:v)$ onto  
$$(x_0(u^2x_2+v^2x_0):x_1(u^2x_2+v(u+v)x_0):x_2(u^2x_2+v^2x_0):\dots:x_n(u^2x_2+v^2x_0)).$$
In particular, the points $\rho_2\varphi((1:0))$ and $\rho_2\varphi((0:1))$ are distinct, which is impossible, since $\varphi((1:0))=\varphi((0:1))$.
\end{proof}

\subsection{Universal property with fixed degree}
Example~\ref{Example:Nodal} provides a morphism from an algebraic variety to  $\Bir(\p^n)_{\le 2}$ which does not lift to $H_2$. This phenomenon occurs because of the degeneration of elements of $\Bir(\p^2)_2$ to elements of $\Bir(\p^2)_1$. The situation is better if the degree is fixed. 
\begin{lemma}\label{Lem:Hd1d2}
Let $d_1,d_2$ be  integers with $1\le d_1\le d_2$. The set $$H_{d_1,d_2}=(\pi_{d_2})^{-1}(\Bir(\p^n)_{d_1})$$ is locally closed in $H_{d_2}$, and isomorphic to $H_{d_1,d_1}\times \p(\kk[x_0,\dots,x_n]_{d_2-d_1})$, where $\kk[x_0,\dots,x_n]_{d_2-d_1}$ denotes the $\kk$-vector space of homogeneous polynomials of degree $d_2-d_1$ in $x_0,\dots,x_n$. Moreover, the projection  
$$\rho_{d_1,d_2}\colon H_{d_1,d_2}\to H_{d_1,d_1}\subseteq H_{d_1}$$
is such that  $\pi_{d_1}\circ \rho_{d_1,d_2}$ and $\pi_{d_2}$ coincide on $H_{d_1,d_2}$.
\end{lemma}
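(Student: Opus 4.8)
Write $E=\kk[x_0,\dots,x_n]_{d_2-d_1}$ and $V=\kk[x_0,\dots,x_n]_{d_1}$, so that $W_{d_1}=\p(V^{n+1})$ and $\p(E)$ are the two factors in play. The plan is to realise the stated isomorphism as multiplication of polynomials and to invert it by linear algebra. Set-theoretically the content is unique factorisation: for $h=(h_0:\dots:h_n)\in H_{d_2}$, the map $\psi_h$ has degree $d_1$ if and only if $a:=\gcd(h_0,\dots,h_n)$ has degree $d_2-d_1$, and then $h_i=a\,g_i$ defines a coprime $(n+1)$-uple $g=(g_0:\dots:g_n)\in H_{d_1,d_1}$ together with $a\in\p(E)$, the pair $(g,a)$ being uniquely determined by $h$ (up to the simultaneous scaling already absorbed in the projective classes). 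Granting that $h\mapsto(g,a)$ is a morphism, $\rho_{d_1,d_2}$ is its first component $h\mapsto g$; and since $\psi_g=\psi_h$ we get at once $\pi_{d_1}\circ\rho_{d_1,d_2}=\pi_{d_2}$ on $H_{d_1,d_2}$, which is the last assertion.

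First I would establish that $H_{d_1,d_2}$ is locally closed. Since $\Bir(\p^n)_{d_1}=\Bir(\p^n)_{\le d_1}\setminus\Bir(\p^n)_{\le d_1-1}$ and $\Bir(\p^n)_{\le m}=\pi_m(H_m)$, assertion~(5) of Lemma~\ref{lem:WHalgebraic}, applied with $F=H_m$, shows that $(\pi_{d_2})^{-1}(\Bir(\p^n)_{\le d_1})$ and $(\pi_{d_2})^{-1}(\Bir(\p^n)_{\le d_1-1})$ are both closed in $H_{d_2}$ (the second being empty if $d_1=1$). Their difference $H_{d_1,d_2}$ is therefore locally closed in $H_{d_2}$; the case $d_2=d_1$ gives in particular that $H_{d_1,d_1}$ is locally closed in $H_{d_1}$, so that all the spaces occurring are genuine algebraic varieties.

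The multiplication map $\mu\colon H_{d_1,d_1}\times\p(E)\to W_{d_2}$, $(g,a)\mapsto(a g_0:\dots:a g_n)$, is a morphism, being given in coordinates by the bidegree-$(1,1)$ products of the coefficients of $g$ and $a$, which cannot all vanish since $g\neq 0$ and $a\neq 0$ force some $a g_i\neq 0$. Its image is $H_{d_1,d_2}$ (each $\psi_{a g}=\psi_g$ has degree $d_1$) and, by the uniqueness above (valid over every extension of $\kk$, as $\kk[x_0,\dots,x_n]$ is a unique factorisation domain), $\mu$ is bijective onto $H_{d_1,d_2}$. The main point is that the inverse is again a morphism, and the key, characteristic-free, observation is that $g$ is the solution of a linear system: for the unknown $g'=(g'_0,\dots,g'_n)\in V^{n+1}$ consider the equations $h_i g'_j-h_j g'_i=0$ for all $i,j$, whose coefficients are linear in those of $h$. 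For $h\in H_{d_1,d_2}$ the solution space is exactly the line $\langle g\rangle$: any solution satisfies $g'_i=\phi\,h_i=\phi a\,g_i$ for a single rational function $\phi$, and writing $\phi a$ in lowest terms and using $\gcd(g_0,\dots,g_n)=1$ forces $\phi a\in\kk$, hence $g'\in\langle g\rangle$. Thus the system has constant corank $1$ on $H_{d_1,d_2}$, and on the open sets where a chosen maximal minor is nonzero Cramer's rule expresses a generator of the kernel line by regular functions of the coefficients of $h$; these opens cover $H_{d_1,d_2}$, so $\rho_{d_1,d_2}\colon h\mapsto g$ is a morphism to $H_{d_1,d_1}\subseteq H_{d_1}$. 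The factor $a$ is then recovered by the exact division $a g_i=h_i$ (a linear system in the coefficients of $a$, again solved by Cramer's rule on suitable opens), so $h\mapsto(g,a)$ is a morphism inverse to $\mu$, and $\mu$ is an isomorphism. Under it $\rho_{d_1,d_2}$ is the first projection, as claimed.

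The step I expect to be the main obstacle is exactly this last one: $\mu$ is visibly a bijective morphism, but over a field of arbitrary characteristic bijectivity does not imply that the inverse is regular, so one cannot argue purely formally. What saves the situation is that the degree $d_1$ is fixed: the common factor has constant degree $d_2-d_1$ along $H_{d_1,d_2}$, which is what makes the linear system $h_i g'_j=h_j g'_i$ have constant corank $1$ and lets one recover $g$, and then $a$, by honest linear algebra.
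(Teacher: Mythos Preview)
Your argument is correct and complete. The locally-closed step matches the paper's (you invoke Lemma~\ref{lem:WHalgebraic}(5) directly; the paper goes through its Corollary~\ref{Cor:Degreleclosed}, which is the same thing). The heart of the proof, however, is handled differently.

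The paper extends the multiplication map to $\hat{\tau}\colon W_{d_1}\times\p(E)\to W_{d_2}$, observes that this is projective hence proper, checks that $V:=U_{d_1}\times\p(E)$ equals $\hat{\tau}^{-1}(\hat{\tau}(V))$ so that $\hat{\tau}|_V\colon V\to\hat{\tau}(V)$ is still proper, and then argues that this restriction is unramified (by a differential computation) and universally bijective (by unique factorisation over every extension). From ``proper $+$ unramified $+$ universally bijective'' between varieties it concludes that $\hat{\tau}|_V$ is an isomorphism. This is short and conceptual, but it leans on a nontrivial criterion and leaves the unramifiedness check implicit.

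Your route is more hands-on: you invert $\mu$ explicitly by solving the linear system $h_i g'_j=h_j g'_i$ in the coefficients of $g'\in(\kk[x]_{d_1})^{n+1}$, show its solution space is the line $\langle g\rangle$ (using coprimality of the $g_i$), deduce constant corank~$1$, and recover $g$ (and then $a$) by Cramer's rule on a cover by principal opens. This is entirely elementary, characteristic-free, and has the pleasant side-effect of exhibiting $\rho_{d_1,d_2}$ directly as a morphism rather than as the first projection after an abstract isomorphism. The trade-off is only length: the paper's three-line appeal to ``proper, unramified, universally bijective $\Rightarrow$ isomorphism'' is replaced by an explicit construction. Both approaches hinge on the same structural fact---that the degree $d_1$ is fixed, so the common factor has constant degree $d_2-d_1$---which you correctly identify as what makes the inversion work.
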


\begin{proof}
By Corollary~\ref{Cor:Degreleclosed}, $(\pi_{d_2})^{-1}(\Bir(\p^n)_{\le d_1})$ and $(\pi_{d_2})^{-1}(\Bir(\p^n)_{\le d_1-1})$ are closed in $H_{d_2}$. This implies that $H_{d_1,d_2}$ is locally closed in $H_{d_2}$ and thus inherits  from $H_{d_2}$ a structure of algebraic variety. We have a natural bijective morphism
$$\begin{array}{ccl}
\tau\colon H_{d_1,d_1}\times \p(\kk[x_0,\dots,x_n]_{d_2-d_1})&\to& H_{d_1,d_2}\\
((f_0:\dots:f_n), h)&\to& (f_0h:\dots:f_nh),\end{array}$$
which is the restriction of a morphism

$$\begin{array}{ccl}
\hat{\tau}\colon  W_{d_1}  \times \p(\kk[x_0,\dots,x_n]_{d_2-d_1})& \to & W_{d_2}\\
((f_0:\dots:f_n),  h) & \to & (f_0 h : \dots: f_nh).\end{array}$$
Denoting by $U_{d_1}\subseteq W_{d_1}$  the open subset given by the elements $(f_0:\dots:f_n)$ such that the $f_i$ have no common factor, it suffices to see that $\hat{\tau}$ restricts to an isomorphism of $V=U_{d_1} \times \p(\kk[x_0,\dots,x_n]_{d_2-d_1})$ with its image in order to prove the result.

The morphism $\hat{\tau}$ is proper, since it is projective. Since $V=\hat{\tau}^{-1}(\hat{\tau}(V))$, the restrictions of $\hat{\tau}$ gives a proper morphism $V\to \hat{\tau}(V)$. It can be checked with differentials that it is unramified; it is moreover clearly universally bijective. Hence, it is an isomorphism.
\end{proof}

\begin{corollary}\label{Lem:ReleveGlobalDegreD}
Let $A$ be an irreducible algebraic variety and $\rho\colon A\to \Bir(\p^n)$ be a morphism, whose image is contained in $\Bir(\p^n)_d$ for some $d$. There exists a unique morphism $\tilde{\rho} \colon A\to H_{d}$  such that  $\rho=\pi_{d}\circ \tilde{\rho}$.
\end{corollary}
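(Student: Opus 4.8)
The plan is to exploit that $\pi_d$ becomes injective once we restrict to maps of degree exactly $d$, so that the lift $\tilde\rho$ is forced set-theoretically and the only real content is to produce it locally as a morphism. Concretely, write $H_{d,d}=(\pi_d)^{-1}(\Bir(\p^n)_d)$; since a birational map of degree exactly $d$ is represented by homogeneous polynomials of degree $d$ without common factor, and such a representation is unique up to scalar, the restriction $\pi_d\colon H_{d,d}\to \Bir(\p^n)_d$ is a bijection. Because the image of $\rho$ lies in $\Bir(\p^n)_d$, any lift $\tilde\rho$ must take values in $H_{d,d}$ and must therefore equal $(\pi_d|_{H_{d,d}})^{-1}\circ\rho$ as a map of sets; this already gives uniqueness, and reduces the existence statement to checking that this canonical set-map is a morphism, which is a local question on $A$.

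For the local construction I would apply Lemma~\ref{Lem:ReleveLocalBirPn} to obtain an open affine covering $(A_i)_{i\in I}$ of $A$, integers $d_i$, and morphisms $\rho_i\colon A_i\to H_{d_i}$ with $\rho|_{A_i}=\pi_{d_i}\circ\rho_i$. For every point $a\in A_i$ the element $\rho_i(a)\in H_{d_i}$ represents the degree-$d$ map $\rho(a)$ by polynomials of degree $d_i$; since a degree-$d$ map cannot be represented by polynomials of smaller degree, necessarily $d_i\ge d$, and hence $\rho_i(A_i)\subseteq (\pi_{d_i})^{-1}(\Bir(\p^n)_d)=H_{d,d_i}$. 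Now I invoke Lemma~\ref{Lem:Hd1d2} with $d_1=d$ and $d_2=d_i$: it provides a morphism $\rho_{d,d_i}\colon H_{d,d_i}\to H_{d,d}\subseteq H_d$ satisfying $\pi_d\circ\rho_{d,d_i}=\pi_{d_i}$ on $H_{d,d_i}$. Setting $\tilde\rho_i=\rho_{d,d_i}\circ\rho_i\colon A_i\to H_d$, I obtain a morphism with image in $H_{d,d}$ and with $\pi_d\circ\tilde\rho_i=\pi_{d_i}\circ\rho_i=\rho|_{A_i}$.

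It then remains to glue. Each $\tilde\rho_i$ is a lift of $\rho|_{A_i}$ taking values in $H_{d,d}$, so by the injectivity of $\pi_d$ on $H_{d,d}$ it coincides with the canonical set-map $(\pi_d|_{H_{d,d}})^{-1}\circ\rho$ on $A_i$. Hence that set-map $\tilde\rho\colon A\to H_d$ restricts to the morphism $\tilde\rho_i$ on each member of the affine covering; since being a morphism is local on the source, $\tilde\rho$ is a morphism, and by construction $\pi_d\circ\tilde\rho=\rho$. Uniqueness was already observed in the first paragraph.

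The only genuinely substantial step is the degree reduction supplied by Lemma~\ref{Lem:Hd1d2}: the local lifts produced by Lemma~\ref{Lem:ReleveLocalBirPn} live in spaces $H_{d_i}$ of a priori varying (and larger) degree, and the point is that, once the image of $\rho$ has constant degree $d$, one can strip off the common factor in an algebraic way so as to land in the single variety $H_d$. By contrast the gluing is formal here, precisely because the pathology of Example~\ref{Example:Nodal} is excluded by the injectivity of $\pi_d$ on the locus of maps of degree exactly $d$ --- this is exactly the feature that fails for $\Bir(\p^n)_{\le d}$, where degenerations to lower degree break both uniqueness and gluing.
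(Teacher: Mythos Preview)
Your proof is correct and follows essentially the same approach as the paper: local lifting via Lemma~\ref{Lem:ReleveLocalBirPn}, degree reduction from $H_{d_i}$ to $H_d$ via the projection $\rho_{d,d_i}$ of Lemma~\ref{Lem:Hd1d2}, and gluing (together with uniqueness) using the bijectivity of $\pi_d$ on $H_{d,d}$. Your exposition adds a useful explanatory layer (framing the argument as checking that the forced set-theoretic lift is a morphism), but the underlying steps are identical to those in the paper.
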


\begin{proof}
Lemma~\ref{Lem:ReleveLocalBirPn} yields an open affine covering $(A_i)_{i \in I}$ of $A$ with the property that for each $i$, there exist an integer $d_i$ and a morphism $\rho_i \colon A_i\to H_{d_i}$  such that the restriction of $\rho$ to $A_i$ is equal to $\pi_{d_i}\circ \rho_i$. Since $\rho(A)$ is contained in $\Bir(\p^n)_d$, we have $d_i\ge d$ for each $i$ and the image of $\rho_i$ is contained in $H_{d,d_i}=(\pi_{d_i})^{-1}(\Bir(\p^n)_{d})$. We can thus replace $\rho_i$ by $\rho_{d,d_i}\circ \rho_i$, where $\rho_{d,d_i}\colon H_{d,d_i}\to H_{d,d}$ is given in Lemma~\ref{Lem:Hd1d2}. 

After this replacement, each $d_i$ is equal to $d$. Because $\pi_d$ restricts to a bijection $H_{d,d}\to \Bir(\p^n)_{d}$, the maps $\rho_i$ and $\rho_j$ coincide on $A_i\cap A_j$. In particular, they yield a morphism $\tilde{\rho} \colon A\to H_{d}$  such that  $\rho=\pi_{d}\circ \tilde{\rho}$. Its uniqueness also follows from the fact that $\pi_d$ restricts to a bijection $H_{d,d}\to \Bir(\p^n)_{d}$.
\end{proof}

This yields the following result, also proved in \cite[$\S 3.4$]{Ngu} (by other methods).

\begin{proposition}  \label{Prop:DegreD}
Let $d\ge 1$ be an integer. The following hold:
\begin{enumerate}
\item
The map $\pi_d\colon H_d\to \Bir(\p^n)_{\le d}$ restricts to a bijection $$H_{d,d}=(\pi_d)^{-1}(\Bir(\p^n)_{d})\to \Bir(\p^n)_{d},$$
where $H_{d,d}$ is open in $H_d$, and is thus an algebraic variety.
\item
Let $A$ be an irreducible algebraic variety.
The morphisms $A\to \Bir(\p^n)$ whose image is in $\Bir(\p^n)_{d}$
correspond, via $\pi_d$, to the morphisms of algebraic varieties $A\to H_{d,d}$. $($This again shows that $\pi_d$ restricts to a homeomorphism $H_{d,d}\to \Bir(\p_n)_d.)$\end{enumerate}
\end{proposition}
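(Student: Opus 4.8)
The plan is to assemble both assertions from the results already proved, the substantive input being Corollary~\ref{Lem:ReleveGlobalDegreD} for the second part.

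\emph{Assertion (1).} I would first establish openness. Every element of $H_d$ maps under $\pi_d$ into $\Bir(\p^n)_{\le d}$ (Lemma~\ref{lem:WHalgebraic}(3)), so $H_{d,d}$ is exactly the complement in $H_d$ of $(\pi_d)^{-1}(\Bir(\p^n)_{\le d-1})$. The set $\Bir(\p^n)_{\le d-1}$ is closed in $\Bir(\p^n)$ by Corollary~\ref{Cor:Degreleclosed}, and $\pi_d$ is continuous (Lemma~\ref{lem:WHalgebraic}(3)), so this preimage is closed and $H_{d,d}$ is open in $H_d$; being an open subvariety of the variety $H_d$, it is itself an algebraic variety. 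Surjectivity of $\pi_d|_{H_{d,d}}$ onto $\Bir(\p^n)_d$ is immediate from the surjectivity of $\pi_d$ and the definition of $H_{d,d}$. For injectivity I would use the fibre description of Lemma~\ref{lem:WHalgebraic}(4): if $\varphi\in\Bir(\p^n)_d$ has reduced representative $(h_0:\dots:h_n)$, then $(\pi_d)^{-1}(\varphi)$ consists of the $(g_0:\dots:g_n)\in W_d$ with $g_ih_j=g_jh_i$ for all $i,j$. Choosing $h_{i_0}\neq 0$, these relations give $g_i=(g_{i_0}/h_{i_0})\,h_i$, so the tuples are proportional by a degree-zero rational function $\lambda=p/q$ (written in lowest terms); since the $h_i$ have no common factor, $q$ divides their gcd and is therefore constant, whence $\lambda\in\kk^*$ and $(g_0:\dots:g_n)=(h_0:\dots:h_n)$. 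Thus the fibre is a single point and $\pi_d|_{H_{d,d}}$ is a bijection.

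\emph{Assertion (2).} One direction is formal: for a morphism of varieties $\psi\colon A\to H_{d,d}$, the composite $\pi_d\circ\psi$ is a morphism in the sense of Definition~\ref{Defi:Family}, obtained by pulling back along $\psi\times\id_{\p^n}$ the $H_d$-birational map that defines $\pi_d$, and its image lies in $\pi_d(H_{d,d})=\Bir(\p^n)_d$. Conversely, given a morphism $\rho\colon A\to\Bir(\p^n)$ with image in $\Bir(\p^n)_d$, Corollary~\ref{Lem:ReleveGlobalDegreD} furnishes a unique morphism $\tilde\rho\colon A\to H_d$ with $\rho=\pi_d\circ\tilde\rho$; since $\rho(A)\subseteq\Bir(\p^n)_d$, the image of $\tilde\rho$ lies in $(\pi_d)^{-1}(\Bir(\p^n)_d)=H_{d,d}$. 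The uniqueness in Corollary~\ref{Lem:ReleveGlobalDegreD} together with the bijectivity from (1) shows that these two assignments are mutually inverse, which yields the asserted correspondence.

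For the parenthetical homeomorphism statement, I would note that $\Bir(\p^n)_d$ is open in $\Bir(\p^n)_{\le d}$, its complement $\Bir(\p^n)_{\le d-1}$ being closed by Corollary~\ref{Cor:Degreleclosed}, and that $H_{d,d}=(\pi_d)^{-1}(\Bir(\p^n)_d)$ is a saturated open subset. Restricting the quotient map $\pi_d\colon H_d\to\Bir(\p^n)_{\le d}$ of Corollary~\ref{Coro:QuotientBirPnd} to this saturated open set gives a quotient map $H_{d,d}\to\Bir(\p^n)_d$, which is a homeomorphism because it is bijective by (1). The one genuinely non-formal point is the injectivity argument in (1)---namely that a degree-$d$ representative of a degree-$d$ map is automatically reduced, hence unique up to a scalar---while everything else is a direct assembly of the preceding statements, with Corollary~\ref{Lem:ReleveGlobalDegreD} carrying the weight of (2).
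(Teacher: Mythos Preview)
Your proof is correct and follows the same route as the paper's: openness of $H_{d,d}$ via Corollary~\ref{Cor:Degreleclosed}, and the correspondence in (2) via Corollary~\ref{Lem:ReleveGlobalDegreD}. You are in fact more explicit than the paper on two points it leaves to the reader---the injectivity of $\pi_d|_{H_{d,d}}$ (which you justify via the fibre description of Lemma~\ref{lem:WHalgebraic}(4)) and the homeomorphism conclusion (which you deduce from the saturated-open restriction of the quotient map of Corollary~\ref{Coro:QuotientBirPnd})---but there is no genuine difference in approach.
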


\begin{proof}
$(1)$ Since $(\pi_d)^{-1}(\Bir(\p^n)_{\le d-1})$ is closed in $H_d$, $(\pi_d)^{-1}(\Bir(\p^n)_{d})$ is open in $H_d$ and is thus an algebraic variety.

$(2)$ If  $\rho \colon A\to H_{d,d}$ is a morphism of algebraic varieties, then $\pi_d\circ \rho$ is a morphism $A\to\Bir(\p^n)$ in the sense of Definition~\ref{Defi:Family}, having its image in $\Bir(\p^n)_d$.

Conversely, let $\rho \colon A\to \Bir(\p^n)$ be a morphism having its image in $\Bir(\p^n)_d$. It corresponds to a unique map $\tilde{\rho}\colon A\to H_{d,d}$ such that $\rho=\pi_d \circ \tilde{\rho}$. Corollary~\ref{Lem:ReleveGlobalDegreD} implies that $\tilde{\rho}$ is a morphism of algebraic varieties. 
\end{proof}

\begin{remark}
One can see that $\Bir(\p^2_\mathbb{C})_{d}$ is irreducible if and only if $d\le 3$ and is connected when $d\le 6$ (\cite{CD}, \cite{Ngu}, \cite{BCM}); the connectedness is open for large $d\ge 7$.
The variety $\Bir(\p^3_\mathbb{C})_{2}$ has three irreducible components \cite[Proposition 2.4.1]{PRV},
but is connected. The structure of $\Bir(\p^n_\mathbb{C})_{d}$  for $d,n\ge 3$ is far from being understood.

Note that the identity is in the closure of $\Bir(\p^n)_d$ for any $d$ \cite[Lemma~3.3]{Bl}, so  $\overline{\Bir(\p^n)_2}=\Bir(\p^n)_{\le 2}$. For large $d$, $\overline{\Bir(\p^n)_d}\subsetneq \Bir(\p^n)_{\le d}$ seems more plausible.
\end{remark}

\subsection{Algebraic subgroups}\label{SubSec:algsubgroups}

\begin{proposition}  \label{Prop:ClosedSubgroups}
Let $G\subseteq \Bir(\p^n)$ be a subgroup, which is closed and connected for the Zariski topology, and with $G\subseteq \Bir(\p^n)_{\le d}$ for some finite degree $d$.

Choosing $d$ minimal, the set $(\pi_d)^{-1}(G\cap \Bir(\p^n)_{d})$ is non empty.
Let $K$ denote its closure in $H_d$. It has the following properties:

\begin{enumerate}
\item
$\pi_d$ induces a homeomorphism $K\to G$ $($for the Zariski topology$)$;
\item
Let $A$ be an irreducible algebraic variety.
The morphisms $A\to \Bir(\p^n)$  $($in the sense of Definition~$\ref{Defi:Family})$ having image in $G$
correspond, via $\pi_d$, to the morphisms of algebraic varieties $A\to K$;

\item The liftings to $K$ of the product map $G \times G \to G$, $(f,g) \mapsto f \circ g$ and of the inverse map $G \to G$, $f \mapsto f^{-1}$ give rise to morphisms of algebraic varieties $K\times K\to K$ and  $ K\to K$.
\end{enumerate}
This gives  $G$ a unique structure of algebraic group.
\end{proposition}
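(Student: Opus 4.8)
The plan is to realise $G$ as a genuine algebraic variety inside $H_d$ through the closure $K$, to transport the group law to $K$, and to check that it is given by morphisms; the whole difficulty is concentrated in showing that $\pi_d$ restricts to a \emph{bijection} $K\to G$, i.e.\ that the degenerations exhibited in Example~\ref{Example:Nodal} cannot occur inside a group. To begin, minimality of $d$ means $G\not\subseteq\Bir(\p^n)_{\le d-1}$, so $G$ contains an element of degree exactly $d$; as $\pi_d$ is surjective onto $\Bir(\p^n)_{\le d}$ (Lemma~\ref{lem:WHalgebraic}), the set $(\pi_d)^{-1}(G\cap\Bir(\p^n)_d)$ is non-empty. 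Since $G$ is closed and $\pi_d$ continuous, $Z:=(\pi_d)^{-1}(G)$ is closed in $H_d$, hence an algebraic variety, and $(\pi_d)^{-1}(G\cap\Bir(\p^n)_d)=Z\cap H_{d,d}$ is open in $Z$ and, by Proposition~\ref{Prop:DegreD}, homeomorphic to $G\cap\Bir(\p^n)_d$. I let $K$ be its closure in $H_d$.

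For the surjectivity of $\pi_d|_K$, recall that $\pi_d$ is closed (Corollary~\ref{Coro:QuotientBirPnd}); for a continuous closed map the image of a closure is the closure of the image, so $\pi_d(K)=\overline{G\cap\Bir(\p^n)_d}$, the closure being taken in $G$. Thus $\pi_d(K)=G$ follows once $G\cap\Bir(\p^n)_d$ is dense in $G$, and this is where connectedness enters: the degree-$d$ locus is a non-empty open subset of $G$, hence dense as soon as $G$ is \emph{irreducible}. To obtain irreducibility I would use that composition of bounded-degree families is again a family (\cite{Bl}), so that multiplication and inversion are continuous on $G$, and then run the classical argument for algebraic groups: the finitely many irreducible components of the Noetherian space $G$ passing through $e$ are permuted by the homeomorphic translations, their joint product is irreducible and contains each of them, so there is a unique component through $e$; translating, the components are pairwise disjoint, and connectedness forces a single one. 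Consequently $K$ is irreducible (being the closure of a space homeomorphic to the irreducible $G\cap\Bir(\p^n)_d$) and $\pi_d|_K\colon K\to G$ is a continuous, closed, surjective map.

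The remaining and most delicate step is the \emph{injectivity} of $\pi_d|_K$. Over $\varphi\in G\cap\Bir(\p^n)_d$ the fibre is a single point (Proposition~\ref{Prop:DegreD}), so the only issue is over an element $\varphi\in G$ of degree $e<d$. By Lemma~\ref{Lem:Hd1d2} the full fibre $(\pi_d)^{-1}(\varphi)$ is the projective space of degree-$(d-e)$ homogeneous factors $h$, the corresponding point being $(f_0h:\dots:f_nh)$ for the reduced representative $(f_0:\dots:f_n)$ of $\varphi$, and I must show that $K$ meets it in exactly one point. The strategy is to apply the left translation by $\varphi^{-1}$, a homeomorphism of $G$ preserving $\Bir(\p^n)_{\le d}$, to reduce to the fibre over $\id$, and then to use the group law to pin down the limiting common factor: a curve in $G\cap\Bir(\p^n)_d$ degenerating to $\id$ determines $h$ through its leading term, and two such curves are compared via their quotient, which again degenerates to $\id$ and forces the same $h$. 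This is exactly the point at which the group structure excludes the pathology of Example~\ref{Example:Nodal}, and I expect it to be the main obstacle, requiring a careful valuative analysis of how the base loci of the degenerating maps vary. Once injectivity holds, $\pi_d|_K$ is a continuous closed bijection, hence a homeomorphism, which is $(1)$.

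For the universal property $(2)$, one direction is immediate: composing a morphism $A\to K$ with the morphism $\pi_d$ (Lemma~\ref{lem:WHalgebraic}) yields a morphism $A\to\Bir(\p^n)$ with image in $G$. Conversely, given $\rho\colon A\to\Bir(\p^n)$ with image in $G$, I would lift it locally with Lemma~\ref{Lem:ReleveLocalBirPn} to maps $A_i\to H_{d_i}$, then use the projections of Lemma~\ref{Lem:Hd1d2} on each degree stratum to realise $(\pi_d|_K)^{-1}\circ\rho$ set-theoretically as a map into $K$; it is a morphism since it is one on the dense open locus of maximal degree (Corollary~\ref{Lem:ReleveGlobalDegreD}) and coincides there with the continuous global lift, while uniqueness comes from the injectivity in $(1)$. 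Part $(3)$ is then formal: since composition and inversion of families are families, the maps $(f,g)\mapsto f\circ g$ and $f\mapsto f^{-1}$ are morphisms $K\times K\to\Bir(\p^n)$ and $K\to\Bir(\p^n)$ with image in the degree-$\le d$ group $G$, and $(2)$, applied to the irreducible varieties $K\times K$ and $K$, lifts them to morphisms $K\times K\to K$ and $K\to K$. Finally $K$ is a variety equipped with morphisms realising the group operations; the group axioms hold because they hold pointwise and a morphism is determined by its values on the $\kk$-points, so $K$, and hence $G$, is an algebraic group, its structure being unique by the characterisation of $K$ through the universal property $(2)$.
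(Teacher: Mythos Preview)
Your overall architecture matches the paper's---irreducibility of $G$ via the standard component argument, surjectivity of $\pi_d|_K$ from closedness of $\pi_d$ and density of $G_d$, and parts (2)--(3) deduced from (1) plus a universal lift---but there is a genuine gap precisely where you flag it: the injectivity of $\pi_d|_K$. Your proposed strategy (reduce to the fibre over $\id$ by left-translating by $\varphi^{-1}$, then compare two degenerating curves via their quotient and a ``valuative analysis'') is not the paper's argument, and as written it does not go through. Left-translation by $\varphi^{-1}$ is only a homeomorphism of $G$; you have no lift of it to $K$, so there is no actual reduction to the fibre over $\id$. And comparing curves $\gamma_1,\gamma_2\to\id$ through $\gamma_1\gamma_2^{-1}$ does not control the limiting common factors: the factor arising in a composition is not the product (or ratio) of the individual factors, and $\gamma_1\gamma_2^{-1}$ a priori lives in $H_{d^n}$, not $H_d$.

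The paper's key idea is to \emph{construct} the translations on $K$ as morphisms of varieties. Fix $f\in K$ and consider $K\to H_{d^2}$, $g\mapsto g\circ f$. For generic $g$ one has $g\circ f=p_g\cdot h_g$ with $h_g\in H_{d,d}$ and $\deg p_g=d^2-d$; the hypersurface $\{p_g=0\}$ is contracted by $f$ onto the base locus of $g$, so $p_g$ is a product of irreducible factors of ${\rm Jac}\,f$. Up to scalars there are thus only finitely many candidates $p_1,\dots,p_s$. The closed loci $W_i=\{g\in K\mid p_i\text{ divides every component of }g\circ f\}$ cover a dense open set of the irreducible variety $K$, so $K=W_i$ for some $i$: a \emph{single} polynomial $p$ works for all $g\in K$. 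Dividing by $p$ gives a morphism $\mu_f\colon K\to H_d$ with image in $K$, and taking $f'\in K$ over $\pi_d(f)^{-1}$ yields $\mu_{f'}\mu_f=\mu_f\mu_{f'}=\id_K$, so each $\mu_f$ is an automorphism of $K$. Injectivity is now one line: if $\pi_d(g)=\pi_d(h)$, choose $f$ with $\pi_d(g)\pi_d(f)\in G_d$; then $\mu_f(g),\mu_f(h)\in K_d$ have equal image under the injective $\pi_d|_{K_d}$, so $\mu_f(g)=\mu_f(h)$ and $g=h$.

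Your argument for (2) inherits the same gap: ``morphism on a dense open plus globally continuous'' does not produce a morphism of varieties. The paper again uses the $\mu_f$: as $f$ ranges over $K$, the open sets $\mu_f^{-1}(K_d)$ cover $K$, and on each of them the candidate lift, post-composed with $\mu_f$, lands in $K_d$ where Proposition~\ref{Prop:DegreD} applies directly. So the Jacobian-factor construction of the $\mu_f$ is the engine driving both (1) and (2), and is the missing ingredient in your proposal.
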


\begin{proof}
Since $G$ is closed in $\Bir(\p^n)_{\le d}$, its pull-back $(\pi_d)^{-1}(G)$ is closed in $H_d$ and  thus has a finite number of irreducible components $I_1,\dots,I_r$. By Corollary~\ref{Coro:QuotientBirPnd}, the sets $\pi_d(I_1),\dots,\pi_d(I_r)$ are closed and irreducible and they cover $G$. Keeping only the maximal ones, we obtain the irreducible components of $G$, which we now call $Z_1,\ldots,Z_m$.

For $i=1,\dots,m$, we set $A_i=\{ g \in G\ |\  g Z_1 = Z_i\}$, which is equal to $\{g\in G\ |\ gZ_1 \subseteq Z_i \}=\bigcap_{h \in Z_1} Z_i h^{-1}$ and is thus closed in $G$. Since $G$ is the disjoint union of all $A_i$, each $A_i$ is also open. There is thus an $i$ with $A_i=G$, which implies that $GZ_1=Z_i$. Since $GZ_1=G$, we get $m=1$ and see that $G$ is irreducible.

Since $G$ is irreducible, the open subset $G_d=\Bir(\p^n)_d\cap G$  is irreducible, and  dense in $G$. Writing $K_d=(\pi_d)^{-1}(G)\cap H_{d,d}=(\pi_d)^{-1}(G_d)$, the map $\pi_d$ induces a homeomorphism $K_d\to H_d$ (Proposition~\ref{Prop:DegreD}), hence $K_d$ is irreducible. Its closure  $K= \overline{K_d}$ is contained in the closed set $(\pi_d)^{-1}(G)$, hence satisfies $\pi_d(K)\subseteq G$. Furthermore, on the one hand, $\pi_d(K)$ is closed (by Corollary~\ref{Coro:QuotientBirPnd}) and on the other hand $\pi_d(K)$ contains the dense open set $G_d$ of $G$. Therefore,  $\pi_d(K)=G$.

Let us fix some element $f \in K$ and consider the map $K \to H_{d^2}$, $g \mapsto g \circ f$. We claim that there exists $p \in \kk [x_0, \ldots, x_n]_{d^2-d}$, a homogeneous polynomial of degree $d^2-d$ which divides each component of each $g \circ f$, $g\in K$.
Indeed, for any $g \in K$, the transformation $\pi_{d^2}(g \circ f)$ belongs to $G$, so has  degree $\le d$. If this degree is exactly $d$, there exist $p_g \in \kk [x_0, \ldots, x_n]_{d^2-d}$, $h_g \in H_d$ such that   $g \circ f = p_g h_g$. Supposing that $\pi_d(g)$ has degree $d$, the hypersurface $p_g=0$ is contracted by $f$ onto the base-points of $g$, so $p_g$ is a product of some factors of ${\rm Jac}\,f$ (the Jacobian determinant of $f$). This gives, up to multiplicative constants, finitely many polynomials of degree $d^2-d$, which we denote by $p_1,\ldots,p_s$. For $i=1, \ldots,s$, let $W_i$ be the set of elements $g \in K$ such that $p_i$ divides each component of $g \circ f$. Each $W_i$ is closed in $K$, and $W_1 \cup \ldots \cup W_s$ contains the open subset of elements $g$ such that $\pi_d(g)$ and $\pi_{d^2}(g\circ f)$ have degree $d$. Since $K=\overline{K_d}$ is irreducible, we get $K = W_i$ for some $i$ and our claim is proved.

Consequently, there exists a morphism $\mu_f \colon K \to (\pi_d)^{-1}(G)\subseteq H_d$ such that for each $g \in K$, $g \circ f = p \, \mu_f (g)$. Since $U=(\mu_f)^{-1}(K_d)=(\mu_f)^{-1}(H_{d,d})$ is a dense open subset of $K$, we obtain $\mu_f(K)=\mu_f(\overline{U})\subseteq \overline{\mu_f(U)}\subseteq K$.

Since $\pi_d(K)=G$, there exists $g \in K$ such that $\pi_{d^2}(f \circ g) = \pi_{d^2}(g \circ f)= {\rm id}_{\p^n}$. As a consequence, the morphisms $m_g \circ m_f$ and $m_f \circ m_g \colon K \to K$ coincide with the identity morphism on $K_d$, which is dense in $K$, so  they are equal to the identity on $K$. This shows that $m_f\colon K\to K$ is an isomorphism.

We are now ready to prove (1). Let us prove that if $\pi_d (g) =\pi_d (h)$ for some $g,h \in K$, then $g=h$. Choose $f \in K$ such $\pi_d(g) \circ \pi_d(f) \in G_d$.  Since $\pi_d ( m_f (g) ) = \pi_d ( m_f (h) ) \in G_d$, this implies that
$ m_f (g)  =  m_f (h)$, whence $g=h$. We know that $\pi_d$ induces a bijective morphism $K \to G$. By Corollary~\ref{Coro:QuotientBirPnd} this map is closed, so it is a homeomorphism.

(2)  If  $\rho \colon A \to K$ is a morphism of algebraic varieties, then $\pi_d \circ \rho$ is a morphism $A \to \Bir(\p^n)$ having its image in $G$. Conversely, let $\varphi \colon A \to \Bir(\p^n)$ be a morphism having its image in $G$. It corresponds to a unique map $\rho \colon A \to K$ such that $\varphi=\pi_d \circ \rho$. By Proposition~ \ref{Prop:DegreD},  $\rho$ is a morphism of algebraic varieties on the open set $\varphi^{-1} (G_d)$. For any $f\in K$, we compose $\rho$ and $\varphi$ with $m_f$ and the right-multiplication by $\pi_d(f)$, and use Proposition~\ref{Prop:DegreD} to see that $\rho$ is a morphism on $\rho^{-1}((m_f)^{-1}(K_d))$. Since the open sets $(m_f)^{-1}(K_d)$ cover $K$, $\rho$ is a morphism from $A$ to $K$.

$(3)$ Since the map $\pi_d\colon K\to \Bir(\p^n)$ is a morphism in the sense of Definition~\ref{Defi:Family}, so also is $I \circ \pi_d \colon K\to \Bir(\p^n)$, where $I\colon \Bir(\p^n)\to \Bir(\p^n)$ is the map which sends an element on its inverse. By $(2)$, the morphism $I\circ \pi_d\colon K\to G$ corresponds to a morphism of algebraic varieties $K\to K$, which is the inverse map of the group structure. In the same way, the composition of the morphism (of algebraic varieties) $K \times K \to H_{d^2}$, $(f,g) \mapsto f \circ g$ and of
the morphism (in the sense of Definition~\ref{Defi:Family}) $\pi_{d^2} \colon H_{d^2} \to \Bir(\p^n)$ (see Lemma~\ref{lem:WHalgebraic}) gives a morphism $K \times K \to 
\Bir(\p^n)$ having image in $G$. By (2), we obtain a morphism $K \times K \to K$ which is the product map of the group structure.
\end{proof}

\begin{corollary} \label{Cor:AlgGnotconnected}
Let $G\subseteq \Bir(\p^n)$ be a subgroup, closed for the Zariski topology and of bounded degree.
There exist an algebraic group $K$, together with a morphism $K\to \Bir(\p^n)$ inducing a homeomorphism $\pi\colon K\to G$, which is a group homomorphism, and such that for any irreducible algebraic variety $A$, the morphisms $A\to \Bir(\p^n)$  $($in the sense of Definition~$\ref{Defi:Family})$ having their image in $G$
correspond, via $\pi$, to the morphisms of algebraic varieties $A\to K$.
\end{corollary}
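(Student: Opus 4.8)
The plan is to reduce to the connected case already settled in Proposition~\ref{Prop:ClosedSubgroups}. Write $G^0$ for the connected component of the identity. Since left and right translations and inversion are homeomorphisms of $\Bir(\p^n)$, and hence of $G$, a standard argument shows that $G^0$ is a closed normal subgroup: for $g\in G^0$ the set $g\circ G^0$ is connected and meets $G^0$, forcing $G^0\circ G^0\subseteq G^0$, and likewise inversion and conjugation preserve $G^0$. As $G^0$ is closed, connected and, like $G$, of bounded degree, Proposition~\ref{Prop:ClosedSubgroups} applies and produces an algebraic group $K^0$ together with a morphism $\pi^0\colon K^0\to\Bir(\p^n)$ inducing a homeomorphism onto $G^0$ and satisfying the universal property of part~(2). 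I would then check that $[G:G^0]$ is finite: since $G$ is closed of bounded degree, $(\pi_d)^{-1}(G)$ is closed in the variety $H_d$, hence has finitely many irreducible components, whose images under the closed continuous map $\pi_d$ (Corollary~\ref{Coro:QuotientBirPnd}) are finitely many closed irreducible subsets covering $G$; thus $G$ has finitely many connected components, which are exactly the cosets of $G^0$.

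Fix coset representatives $\id=g_1,\dots,g_k$, so that $G=\bigsqcup_{i=1}^k g_i\circ G^0$. As a variety I would take $K$ to be the disjoint union $\bigsqcup_{i=1}^k K^0_i$ of $k$ copies of $K^0$, and define $\pi\colon K\to\Bir(\p^n)$ on the $i$-th copy by $x\mapsto g_i\circ\pi^0(x)$; this is a morphism on each component, being the composition of $\pi^0$ with the fixed element $g_i$. Because each coset $g_i\circ G^0$ is open and closed in $G$ and $\pi^0$ is a homeomorphism onto $G^0$, the induced map $\pi\colon K\to G$ is a bijection which is a homeomorphism on each component, hence a homeomorphism. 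I then transport the group law of $G$ to $K$ through $\pi$, so that $\pi$ becomes a group isomorphism, and it remains only to verify that this law is algebraic.

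The hard part is checking that the transported multiplication and inverse are morphisms of the variety $K$. Each component $K^0_i\times K^0_j$ of $K\times K$ is irreducible, so its image under multiplication is connected and lands in a single copy $K^0_l$, where $g_i\circ g_j=g_l\circ c$ with $c\in G^0$. Writing $\pi^0(x)=a$ and $\pi^0(y)=b$, the identified map is $(a,b)\mapsto c\circ(g_j^{-1}\circ a\circ g_j)\circ b$, which lands in $G^0$ precisely because $G^0$ is normal, so that $g_j^{-1}\circ a\circ g_j\in G^0$. To see that this lifts to a morphism $K^0_i\times K^0_j\to K^0_l$, I would apply part~(2) of Proposition~\ref{Prop:ClosedSubgroups} (for $G^0$) to the map $K^0\times K^0\to\Bir(\p^n)$, $(x,y)\mapsto c\circ(g_j^{-1}\circ\pi^0(x)\circ g_j)\circ\pi^0(y)$: this is a morphism in the sense of Definition~\ref{Defi:Family}, since composing families and pre/post-composing with the fixed elements $c,g_j,g_j^{-1}$ preserve the morphism property, and its image lies in $G^0$. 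The inverse is handled identically: on $g_i\circ G^0$ one writes $(g_i\circ a)^{-1}=g_{i'}\circ\big((g_{i'}^{-1}\circ a^{-1}\circ g_{i'})\circ e\big)$ with $g_i^{-1}=g_{i'}\circ e$, $e\in G^0$, and the same lifting argument applies, using that inversion on $G^0$ is already a morphism by part~(3). The delicate, though routine, point is the coset bookkeeping together with the repeated use of normality of $G^0$ to keep every expression inside $G^0$; once this is done, $K$ is an algebraic group and $\pi$ a group homomorphism.

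Finally, for the universal property I would take an irreducible variety $A$ and a morphism $\rho\colon A\to\Bir(\p^n)$ with image in $G$. Since $A$ is irreducible, hence connected, and the cosets $g_i\circ G^0$ are open and closed, $\rho(A)$ lies in a single coset $g_i\circ G^0$. Then $a\mapsto g_i^{-1}\circ\rho(a)$ is a morphism into $G^0$, so by part~(2) of Proposition~\ref{Prop:ClosedSubgroups} it corresponds to a morphism $A\to K^0=K^0_i$; composing with the inclusion of the $i$-th copy gives the desired morphism $A\to K$, and the converse direction is immediate. This establishes all the assertions of the corollary.
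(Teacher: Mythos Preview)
Your proof is correct and follows essentially the same approach as the paper: reduce to the connected case (Proposition~\ref{Prop:ClosedSubgroups}) via the identity component, using that $G$ has only finitely many irreducible components because $(\pi_d)^{-1}(G)$ is closed in $H_d$. The paper's proof is extremely terse---it simply cites Humphreys for the standard facts about the identity component and says the result ``follows from the connected case''---whereas you spell out explicitly the construction of $K$ as a disjoint union of copies of $K^0$, the transport of the group law, and the verification (via part~(2) of Proposition~\ref{Prop:ClosedSubgroups}) that multiplication and inversion are morphisms. One small remark: your sentence ``its image under multiplication is connected and lands in a single copy $K^0_l$'' appeals to connectedness, but at that point you have not yet shown multiplication is continuous; the cleaner justification is the purely algebraic one you already have available, namely $g_iG^0\cdot g_jG^0=g_ig_jG^0$ by normality of $G^0$.
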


\begin{proof}
The fact that  $G$ has a finite number of irreducible components can be checked in the same way as at the beginning of the proof of Proposition~\ref{Prop:ClosedSubgroups}. As for algebraic groups, one can successively establish the following points: 
(1) exactly one irreducible component of $G$ passes through ${\rm id}$; (2)  this irreducible component is a (closed) normal subgroup of finite index in $G$, whose cosets are the connected as well as irreducible components of $G$ (see \cite[\S7.3]{Humphreys}). The result  then follows from the connected case, treated in Proposition~\ref{Prop:ClosedSubgroups}.
\end{proof}

\begin{lemma}\label{Lemma:AlgGrNonInt}
Let $A$ be an algebraic group, and let $\rho\colon A\to \Bir(\p^n)$ be a morphism $($in the sense of Definition~$\ref{Defi:Family})$, which is also a group homomorphism. Then, the image $G$ of $A$ is a closed subgroup of $\Bir(\p^n)$, which has bounded degree. Denoting by $\pi \colon K\to G$ the homeomorphism constructed in Corollary~$\ref{Cor:AlgGnotconnected}$, there is a unique morphism of algebraic groups $\tilde{\rho}\colon A\to K$ such that $\rho = \pi  \circ \tilde{\rho}$.
\end{lemma}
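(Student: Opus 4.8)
The plan is to separate the three claims---bounded degree, subgroup, closedness---treat the first two as immediate, and concentrate on closedness, which is the real content; the final lifting statement will then fall out of the universal property of Corollary~\ref{Cor:AlgGnotconnected}.

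First I would handle the easy assertions. Applying Lemma~\ref{Lem:ReleveLocalBirPn} to $\rho$ gives an open affine covering of $A$ on which $\rho$ factors locally as $\pi_{d_i}\circ\rho_i$ with $\rho_i\colon A_i\to H_{d_i}$; since $A$ is quasi-compact, finitely many $A_i$ suffice, and with $d=\max_i d_i$ we get $\rho(A)\subseteq\Bir(\p^n)_{\le d}$, so $G$ has bounded degree. That $G$ is a subgroup is immediate from $\rho$ being a group homomorphism.

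The heart of the argument is closedness. Let $\bar G$ be the closure of $G$. Using that for fixed $\varphi$ the maps $\psi\mapsto\varphi\circ\psi$, $\psi\mapsto\psi\circ\varphi$ and $\psi\mapsto\psi^{-1}$ are homeomorphisms (the observations following Definition~\ref{defi: Zariski topology}), a standard argument shows $\bar G$ is again a subgroup; and since $\Bir(\p^n)_{\le d}$ is closed (Corollary~\ref{Cor:Degreleclosed}), we have $\bar G\subseteq\Bir(\p^n)_{\le d}$, so $\bar G$ is a closed subgroup of bounded degree. Corollary~\ref{Cor:AlgGnotconnected} then furnishes an algebraic group $\bar K$ together with a group-homomorphism homeomorphism $\bar\pi\colon\bar K\to\bar G$ enjoying the stated universal property. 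The connected components of the algebraic group $A$ are disjoint, open-closed, irreducible subvarieties covering $A$; on each of them $\rho$ has image in $G\subseteq\bar G$, so the universal property produces a morphism of algebraic varieties to $\bar K$, and because the components are disjoint these glue to a morphism $\tilde\rho\colon A\to\bar K$ with $\rho=\bar\pi\circ\tilde\rho$. As $\bar\pi$ is an injective group homomorphism and $\rho$ a group homomorphism, $\tilde\rho$ is a group homomorphism, hence a morphism of algebraic groups. By the classical fact that the image of a homomorphism of algebraic groups is a closed subgroup (\cite{Humphreys}), $\tilde\rho(A)$ is closed in $\bar K$; applying the homeomorphism $\bar\pi$ gives that $G=\rho(A)=\bar\pi(\tilde\rho(A))$ is closed in $\bar G$, hence in $\Bir(\p^n)$. (In particular $G=\bar G$.) I expect this detour through $\bar K$ to be the main obstacle, precisely because $\Bir(\p^n)$ is not itself an algebraic group, so the closed-image theorem cannot be invoked until one has a genuine algebraic group $\bar K$ to map into.

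It remains to produce $\tilde\rho$ into the group $K$ attached to $G$ by Corollary~\ref{Cor:AlgGnotconnected}. Now that $G$ is known to be closed of bounded degree, that corollary applies to $G$ itself, yielding $(K,\pi)$. Re-running the component-by-component lifting, the universal property lifts $\rho$ to a morphism of algebraic varieties $\tilde\rho\colon A\to K$ with $\rho=\pi\circ\tilde\rho$, and exactly as above $\tilde\rho$ is a group homomorphism, hence a morphism of algebraic groups. Uniqueness is immediate: $\pi$ is a homeomorphism, in particular injective, so any lift must coincide set-theoretically with $\pi^{-1}\circ\rho$.
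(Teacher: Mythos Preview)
Your proof is correct and follows essentially the same route as the paper: pass to the closure $\bar G$, apply Corollary~\ref{Cor:AlgGnotconnected} to obtain an algebraic group $\bar K$, lift $\rho$ to a morphism of algebraic groups $A\to\bar K$, and invoke the closed-image theorem for algebraic group homomorphisms to force $G=\bar G$. Your final paragraph re-applying the corollary to $G$ is harmless but redundant, since once $G=\bar G$ the pair $(\bar K,\bar\pi)$ already \emph{is} the pair $(K,\pi)$ attached to $G$; on the other hand, your explicit handling of the connected components of $A$ is a detail the paper glosses over.
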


\begin{proof}
By Lemma~\ref{Lem:ReleveLocalBirPn}, the image $G=\rho(A)$ is of bounded degree. Denote by $\overline{G}$ its closure, and observe that it is a subgroup of $\Bir(\p^n)$ (see the proof of \cite[Proposition A, page 54]{Humphreys}).

By Corollary~\ref{Cor:AlgGnotconnected}, we obtain a canonical homeomorphism $K\to \overline{G}$, where $K$ is an algebraic group, and a lift $\tilde{\rho}\colon A\to K$ of the morphism $\rho\colon A\to \Bir(\p^n)$ having image in $\overline{G}$. Since $\rho$ is a group homomorphism, $\tilde{\rho}$ is a morphism of algebraic groups, hence its image is closed, so it is $K$ itself. This shows that $\overline{G}=G$ and completes the proof.
\end{proof}

\begin{remark}
In the literature, an algebraic subgroup $G$ of $\Bir(X)$ corresponds to taking an algebraic group $G$ and a morphism $G\to \Bir(X)$ (in the sense of Definition~\ref{Defi:Family}) which is a group homomorphism, and whose schematic kernel is trivial (which is not the case of the action of the additive group on $\A^1$ $(a,x)\mapsto (x+a^p)$ in characteristic~$p$).

Corollary~\ref{Cor:AlgGnotconnected} allows one to give a more intrinsic definition in the case of $X=\p^n$, which corresponds to taking closed subgroups of $\Bir(\p^n)$ of bounded degree, and Lemma~\ref{Lemma:AlgGrNonInt} shows that the two definitions agree.
\end{remark}

\begin{remark}
One can show that any algebraic subgroup $G$ of  $\Bir( \p^n)$ is linear, and this reduces to the connected case. By a classical result of Weil, $G$ acts by automorphisms on some (smooth) rational variety $X$. Denote by $\alpha_X \colon X \to A(X)$ the Albanese morphism, that is the universal morphism to an abelian variety. Then, the Nishi-Matsumura theorem asserts that the induced action of $G$ on $A(X)$ factors through a morphism $A(G) \to A(X)$ with finite kernel (see \cite{Brion}).  However, since $X$ is rational, $A(X)$, and then $A(G)$ are trivial. Hence, $G$ is affine by the structure theorem of Chevalley.
\end{remark}

\section{A crucial example, and the proof of Theorem~\ref{ThmNoStructure}}
The following example will be essential in the proofs of Theorems~\ref{ThmNoStructure} and \ref{thmZar}.

\begin{example}\label{TheExample}

Let $\hat{V}$ be $\p^2\backslash\{(0:1:0),(0:0:1)\}$ and let $\tilde{\rho}\colon \hat{V}\to H_2$ be the morphism which sends $(a:b:c)$ onto
$$(x_0(ax_2+cx_0):x_1(ax_2+bx_0):x_2(ax_2+cx_0):\dots :x_n(ax_2+cx_0)),$$
and define $V\subseteq \Bir(\p^n)_{\le 2}$ to be the image of the  morphism $\rho=\pi_2\circ \tilde{\rho}\colon \hat{V}\to \Bir(\p^n)$.
The map ${\rho}\colon\hat{V}\to V$ sends the line $L\subseteq \hat{V}$ corresponding to $b=c$ to the identity,
and induces a bijection $\hat{V}\backslash L \to V\backslash \{\id\}$.
\end{example}
\begin{remark}
The above map corresponds in affine coordinates to
$$\begin{array}{rccl} \hat{V}\times \A^n&\dasharrow& \A^n,\\
\left((a:b:c),(x_1,\dots,x_n)\right)&\dasharrow& (x_1\cdot \frac{ax_2+b}{ax_2+c},x_2,\dots,x_n).\end{array}$$
\end{remark}

\begin{lemma}\label{Lem:PropertiesOfExample}
The algebraic varieties and maps of Example~$\ref{TheExample}$ satisfy:
\begin{enumerate}
\item
The set $V\subseteq \Bir(\p^n)$ is closed.
\item
The map $\rho\colon\hat{V}\to V$ is a topological quotient map.
\end{enumerate}
\end{lemma}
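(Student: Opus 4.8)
The plan is to exploit the fact that $\tilde\rho$ is induced by a morphism defined on the whole \emph{complete} plane $\p^2$, so that properness does the essential work. First I would observe that the defining formula makes sense on all of $\p^2$: the components
$$h_0=x_0(ax_2+cx_0),\quad h_1=x_1(ax_2+bx_0),\quad h_i=x_i(ax_2+cx_0)\ (2\le i\le n)$$
are linear in $(a:b:c)$, and their coefficients (linear forms in $a,b,c$) vanish simultaneously only when $a=b=c=0$. Hence the $(n+1)$-uple $(h_0:\dots:h_n)$ never degenerates to the zero tuple for $(a:b:c)\in\p^2$, so $\tilde\rho$ extends to a morphism $\bar\rho\colon\p^2\to W_2$. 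A short computation gives $\bar\rho(0:1:0)=(0:x_0x_1:0:\dots:0)$ and $\bar\rho(0:0:1)=(x_0^2:0:x_0x_2:\dots:x_0x_n)$; both are nonzero tuples but neither defines a birational map, so the two points deleted to form $\hat V$ are precisely the points of $\p^2$ whose image leaves $H_2$.

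For $(1)$, set $Z=\bar\rho(\p^2)$. Since $\p^2$ is complete, $\bar\rho$ is a closed morphism and $Z$ is closed in $W_2$. Writing $q_1=\bar\rho(0:1:0)$, $q_2=\bar\rho(0:0:1)$, we have $Z=\tilde\rho(\hat V)\cup\{q_1,q_2\}$; as $\tilde\rho(\hat V)\subseteq H_2$ while $q_1,q_2\notin H_2$, this yields the identification $\tilde\rho(\hat V)=Z\cap H_2$, which is therefore closed in $H_2$. Now $V=\pi_2(\tilde\rho(\hat V))$, and since $\pi_2\colon H_2\to\Bir(\p^n)_{\le 2}$ is a closed map (Corollary~\ref{Coro:QuotientBirPnd}), $V$ is closed in $\Bir(\p^n)_{\le 2}$; as the latter is closed in $\Bir(\p^n)$ by Corollary~\ref{Cor:Degreleclosed}, $V$ is closed in $\Bir(\p^n)$.

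For $(2)$, I would factor $\rho=\pi_2\circ\tilde\rho$ and check that each factor is a closed continuous surjection onto its image, using that a continuous closed surjection is a topological quotient map and that a composite of such maps is again one. The restriction of $\pi_2$ to the closed subset $\tilde\rho(\hat V)$ is closed onto $V$, since closed subsets of $\tilde\rho(\hat V)$ are closed in $H_2$ and $\pi_2$ is closed. For $\tilde\rho\colon\hat V\to\tilde\rho(\hat V)$, I would argue it is the restriction of the closed map $\bar\rho$ to the open set $\hat V=\p^2\setminus\{(0:1:0),(0:0:1)\}$: given a closed $C\subseteq\hat V$, its closure $\overline{C}$ in $\p^2$ satisfies $\overline{C}\setminus C\subseteq\{(0:1:0),(0:0:1)\}$, so $\bar\rho(\overline{C})=\tilde\rho(C)\cup S$ with $S\subseteq\{q_1,q_2\}$; intersecting the closed set $\bar\rho(\overline{C})$ with $\tilde\rho(\hat V)=Z\setminus\{q_1,q_2\}$ deletes $S$ and returns exactly $\tilde\rho(C)$, which is thus closed in $\tilde\rho(\hat V)$.

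The one genuinely delicate point is the precise identification $\tilde\rho(\hat V)=Z\cap H_2=Z\setminus\{q_1,q_2\}$: it requires that the two deleted source points be the only points of $\p^2$ leaving $H_2$, and that $q_1,q_2$ are not re-attained from within $\hat V$ (automatic, as $\tilde\rho(\hat V)\subseteq H_2$ while $q_1,q_2\notin H_2$). Once this is secured, both assertions reduce to the completeness of $\p^2$ together with the closedness of $\pi_2$.
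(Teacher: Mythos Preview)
Your argument is correct and takes a genuinely different route from the paper. The paper establishes $(1)$ by writing down explicit closed conditions in $H_2$ that cut out $\tilde\rho(\hat V)$: it first imposes $h_0x_j=h_jx_0$ for $j\ge 2$, then forces $l_1$ to depend only on $x_0,x_2$ and $l_2$ to be a combination of $x_0x_1,x_1x_2$, and finally the condition $a=d$; from this equational description it is immediate that $\tilde\rho$ is a closed \emph{embedding}, which gives $(2)$ at once after composing with the closed map $\pi_2$. You instead extend $\tilde\rho$ to a morphism $\bar\rho\colon\p^2\to W_2$ and let properness of $\p^2$ do the work: $Z=\bar\rho(\p^2)$ is closed in $W_2$, the two deleted points are exactly those whose images leave $H_2$, and hence $\tilde\rho(\hat V)=Z\cap H_2$ is closed. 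For $(2)$ you show $\tilde\rho$ is a closed surjection onto its image by comparing with $\bar\rho$ on closures, and then compose closed continuous surjections.

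Your approach is more conceptual and avoids the bookkeeping of the equational description; the paper's approach is more hands-on but yields the slightly stronger statement that $\tilde\rho$ is a closed embedding (an isomorphism onto its image), not merely a closed map. For the purposes of the lemma both suffice. One small remark: your sentence ``a composite of such maps is again one'' should be read as referring to closed continuous surjections (which do compose), not to quotient maps in general, since arbitrary quotient maps need not compose; your wording already makes this clear, but it is worth flagging.
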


\begin{proof}For $(1)$, we show that $\tilde{\rho}(\hat{V})$ is closed in $H_2$ (recall that $\pi_2$ is closed by Corollary~\ref{Coro:QuotientBirPnd}).
Let $A\subseteq H_2$ be the closed set of elements $(h_0:\dots:h_n)\in H_2$ satisfying $h_0 x_j=h_j x_0$ for $j=2,\dots,n$. Each element of $A$ is of the form $(x_0 l_1:l_2:x_2 l_1:\dots:x_n l_1)$, where $l_i$ is a form of degree $i$. Imposing the conditions that $l_1$  depends  only on $x_0$ and $x_2$ and that $l_2$ is a linear combination of $x_1x_2$ and $x_0x_1$, we get  
$$(x_0(ax_2+cx_0):x_1(dx_2+bx_0):x_2(ax_2+cx_0):\dots :x_n(ax_2+cx_0))$$
for some $a,b,c,d\in \kk$ satisfying $(b,d)\not=(0,0)$ and $(a,c)\not=(0,0)$. Adding the condition that $a=d$ yields the closed set $\tilde{\rho}(\hat{V})$.

$(2)$ The description of $\tilde{\rho}(\hat{V})$ shows that $\tilde{\rho}$ is a closed embedding. Since $\pi_2$ is closed and continous  (Corollary~\ref{Coro:QuotientBirPnd}), so is $\rho$. 
\end{proof}
The following result implies Theorem~\ref{ThmNoStructure}.

\begin{proposition}\label{Prop:led}
For $n,d\ge 2$, the following hold:
\begin{enumerate}
\item
There is no structure of algebraic variety $($or ind-algebraic variety$)$ on $\Bir(\p^n)_{\le d}$, such that algebraic $A$-families $A\to \Bir(\p^n)$ having image in $\Bir(\p^n)_{\le d}$ correspond to morphisms of $($ind$)$-algebraic varieties $A\to \Bir(\p^n)_{\le d}$.
\item
There is no structure of ind-algebraic variety $($or algebraic variety$)$ on $\Bir(\p^n)$, such that algebraic $A$-families $A\to \Bir(\p^n)$ correspond to the morphisms of $($ind$)$-algebraic varieties $A\to \Bir(\p^n)$.
\end{enumerate}
\end{proposition}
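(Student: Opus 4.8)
The plan is to reduce all four assertions (a variety or an ind-variety structure, on $\Bir(\p^n)_{\le d}$ or on $\Bir(\p^n)$) to a single local statement: the family $\rho\colon\hat{V}\to\Bir(\p^n)$ of Example~\ref{TheExample}, which contracts the line $L\cong\p^1$ onto $\id$ and is injective on $\hat{V}\setminus L$, cannot be a morphism into any finite-dimensional algebraic variety near $\id$. Suppose such a structure existed, with families corresponding to morphisms. Then $\rho$ is a morphism of (ind-)varieties: in part $(1)$ its image lies in $\Bir(\p^n)_{\le 2}\subseteq\Bir(\p^n)_{\le d}$, so it qualifies, and in part $(2)$ it qualifies directly. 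If the structure is that of an ind-variety $\bigcup_i X_i$, then $\hat{V}$ being of finite type, $\rho$ factors through one stage $X_{i_0}$, a genuine algebraic variety; in the plain variety cases I simply take $X_{i_0}$ to be the whole space. In every case one obtains a morphism of varieties $\rho\colon\hat{V}\to X_{i_0}$ with $\id=\rho(L)\in X_{i_0}$.

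Next I would exploit the geometry of $\hat{V}=\p^2\setminus\{(0:1:0),(0:0:1)\}$ and of the line $L=\{b=c\}$, which avoids the two removed points. Choose an affine open neighbourhood $V_0\subseteq X_{i_0}$ of $\id$ together with finitely many regular functions $g_1,\dots,g_N$ on $V_0$ separating its points (the coordinates of a closed embedding $V_0\hookrightarrow\A^N$). Then $\rho^{-1}(V_0)$ is an open neighbourhood of $L$ in $\hat{V}$, and its complement is closed in $\hat{V}$ and disjoint from $L$. Taking closures in $\p^2$ and using that the two removed points are not on $L$, this complement is a closed subset of $\p^2$ disjoint from the line $L$; since every curve in $\p^2$ meets every line, it contains no curve, hence is finite. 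Therefore $\rho^{-1}(V_0)=\p^2\setminus\{\text{finitely many points}\}$.

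The crucial input is then that every regular function on such an open set is constant: removing finitely many points from the smooth, hence normal, surface $\p^2$ does not change the global functions, so each regular function on $\rho^{-1}(V_0)$ extends to $\p^2$ and is constant. Applying this to the pullbacks $g_i\circ\rho$, which are regular on $\rho^{-1}(V_0)$ since $\rho$ is a morphism, each $g_i\circ\rho$ is constant on the irreducible set $\rho^{-1}(V_0)$. Consequently every $g_i$ is constant on the image $\rho(\rho^{-1}(V_0))=V_0\cap V$, and as the $g_i$ separate points of $V_0$, the set $V_0\cap V$ reduces to a single point. This is absurd: $\rho^{-1}(V_0)$ is a non-empty open subset of the irreducible surface $\hat{V}$, so it meets the dense open set $\hat{V}\setminus L$ on which $\rho$ is injective; hence $V_0\cap V$ contains $\id$ together with at least one further point. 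This contradiction proves the proposition. Here $n\ge 2$ is used to form the family of Example~\ref{TheExample} and $d\ge 2$ to ensure its image lies in $\Bir(\p^n)_{\le d}$.

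The main obstacle, and the only place where the specific example really enters, is the step identifying $\rho^{-1}(V_0)$ as the complement of finitely many points and deducing that it carries only constant functions: this is exactly the incompatibility between contracting the complete curve $L$ (which has positive self-intersection in $\p^2$, so that its Zariski neighbourhoods are function-poor) to a point, and the function-rich nature of an affine chart of a variety. Everything else is formal: the reduction of the ind-variety case to a single stage $X_{i_0}$, and the observation that the four cases are handled uniformly once $\rho$ is known to be a morphism into a finite-dimensional variety near $\id$. One could alternatively phrase the argument through the closed subset $V$ of Lemma~\ref{Lem:PropertiesOfExample}, but the function-theoretic version above needs neither that $V$ is closed nor that $\rho$ is a quotient map.
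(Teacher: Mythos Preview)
Your argument is correct, but it follows a genuinely different route from the paper's. The paper first invokes Lemma~\ref{Lem:PropertiesOfExample}: $V$ is closed in $\Bir(\p^n)$ and $\rho\colon\hat V\to V$ is a topological quotient map. From the assumed structure it then deduces that $V$ itself would be an irreducible algebraic variety of dimension~$2$, and reaches a contradiction by observing that in the quotient topology every curve of $V$ passes through $\id$, which is impossible in a genuine variety of dimension~$\ge 2$.

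Your proof bypasses Lemma~\ref{Lem:PropertiesOfExample} entirely and works directly inside an ambient stage $X_{i_0}$: an affine chart $V_0$ around $\id$ pulls back to $\p^2$ minus finitely many points (because the complement must miss the complete line $L$ of positive self-intersection), and Hartogs--type extension on the normal surface $\p^2$ forces all coordinate functions to be constant on the image, contradicting the injectivity of $\rho$ off $L$. This is essentially the alternative the paper sketches in the remark following the proof (contracting a curve of positive self-intersection without base-points), recast in function-theoretic language. Your approach is more self-contained and needs neither that $V$ is closed nor the quotient-map property; the paper's topological version, on the other hand, isolates the phenomenon (``all curves through one point'') that is reused in Section~\ref{Sec:TopObs} to define attractive points and prove Theorem~\ref{thmZar}.
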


\begin{proof}Suppose, for contradiction, the existence of the ind-algebraic variety structure on $\Bir(\p^n)_{\le d}$ or $\Bir(\p^n)$. 
Let $\rho\colon \hat{V}\to \Bir(\p^n)$ be the morphism given by Example~\ref{TheExample}. Its image is   $V\subseteq\Bir(\p^n)_{\le 2}\subseteq \Bir(\p^n)_{\le d}$, which is closed in $\Bir(\p^n)$, hence in $\Bir(\p^n)_{\le d}$ (Lemma~\ref{Lem:PropertiesOfExample}). The map $\rho$  thus corresponds to a morphism of ind-algebraic varieties from $\hat{V}$ to  $\Bir(\p^n)_{\le d}$ or $\Bir(\p^n)$. Since $\hat{V}$ is an algebraic variety, the morphism factors through a morphism from $\hat{V}$ to a closed algebraic variety (of finite dimension). Since the image $V$ of $\hat{V}$ is closed, the map $\hat{V}\to V$ is then a morphism of algebraic varieties,  so $V$ is an irreducible variety of dimension~$2$. Lemma~\ref{Lem:PropertiesOfExample} asserts that this map is a topological quotient map. Hence, all closed sets of $V$ correspond  either to points, to $V$ itself or to images of curves of $\hat{V}$. In particular, all curves of $V$ pass through the same point, which is impossible for an algebraic variety of dimension $\ge 2$.
\end{proof}

\begin{remark}
In the above proof, the fact that $\rho\colon \hat{V}\to V$ cannot be a morphism of algebraic varieties can be seen in another way. Extending to $\p^2$, we would obtain a birational map $\p^2\dasharrow V$, defined on the dense open subset $\hat{V}$, which contracts the line $L\subseteq \hat{V}$ onto a point. This is impossible, since the line has positive self-intersection and does not contain any base-point. 
It can also be verified that $\rho\colon \hat{V}\to V$ is not a morphism of algebraic spaces or of algebraic stacks.
\end{remark}

\section{Topological obstructions to being an ind-algebraic group}\label{Sec:TopObs}
To distinguish the topology on $\Cr_n(\kk)$ from those of algebraic varieties, we define the following notion (note that the fact that $\kk$ is algebraically closed is important here).

\begin{definition}
Let $S$ be a topological space.  We say that a closed point $p\in S$ is an \emph{attractive point of $S$} if $S$ contains an infinite proper closed subset, and if $p$ is contained in every infinite closed set $F\subseteq S$.
\end{definition}

Recall that an ind-algebraic variety $Y$ corresponds to a sequence $Y_1\subseteq Y_2\subseteq Y_3\subseteq \dots$ of algebraic varieties, each closed in the next one. There are two classical ways of putting a topology on $Y$. The first, introduced by Shafarevich \cite{Sha2} corresponds to saying that a set $F\subseteq Y$ is closed if  $F\cap Y_i$ is closed in $Y_i$ for each~$i$. The second, introduced by Kambayashi \cite{Kam1,Kam2}, is defined by taking zero sets of functions obtained by projective limits of functions on the $Y_i$, when all the $Y_i$ are affine (if the $Y_i$ are not affine we can define the topology on sequences of affine subvarieties as is done in the usual way for varieties). Following \cite{Sta}, we  call the first one the ind-topology and the second one the Zariski topology. Note that the two topologies are obviously the same on algebraic varieties;  however they differ on most ind-algebraic varieties (see \cite{Sta}).

\begin{lemma}\label{attractive points of a closed irreducible subset of an ind-variety}
Let $X$ be an ind-algebraic variety. Putting the Zariski or ind-topology on $X$, the set of attractive points of any closed irreducible subset of $X$ is empty. 
\end{lemma}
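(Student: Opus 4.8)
The plan is to argue by contradiction: assuming a closed irreducible subset $Z\subseteq X$ has an attractive point $p$, I would first distil the two defining properties into the working facts that (a) $Z$ is infinite and contains an infinite proper closed subset, and (b) every closed subset of $Z$ avoiding $p$ is finite. The goal is then to manufacture an infinite closed subset of $Z$ missing $p$ (violating (b)), or else to violate (a). Writing $Z=\bigcup_i Z_i$ with $Z_i=Z\cap Y_i$, each $Z_i$ is a finite-dimensional algebraic variety closed in $Z$, and since the ind- and Zariski topologies agree on algebraic varieties, the subspace topology on each $Z_i$ is in both cases the ordinary variety topology; as $Z_i$ is closed in $Z$, every variety-closed subset of any $Z_i$ is automatically closed in $Z$ for either topology. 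This is the bridge that lets me do finite-dimensional geometry on the pieces $Z_i$ and read the output back as genuine closed subsets of $Z$.

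First I would cut the dimension down. The key finite-dimensional input is that an irreducible variety $C$ of dimension $\ge 2$ contains an infinite closed subset avoiding any prescribed point $p$: choosing an auxiliary point $q\in C\setminus\{p\}$ and, in a projective model of $C$, a general hyperplane containing $q$ but not $p$, the resulting section has dimension $\ge 1$ and avoids $p$, and its trace on $C$ is the desired set. Feeding such a set into (b) rules out any component of dimension $\ge 2$ in any $Z_i$. Hence every $Z_i$ is a finite union of irreducible curves and isolated points, and (b) forces every curve component to pass through $p$.

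It remains to treat this one-dimensional situation, and here lies the main obstacle, because the two topologies behave differently. For the ind-topology two distinct curve components already make $Z$ reducible, which finishes the argument at once; but for the Zariski (Kambayashi) topology irreducibility is weaker, and indeed a union of coordinate axes through the origin shows that a \emph{reducible} union of infinitely many curves through a point genuinely has an attractive point. So irreducibility must be exploited quantitatively. Let $\mathcal C$ be the set of curve components occurring among the $Z_i$. If $\mathcal C$ is infinite, I would work in an affine chart at $p=0$ and pick a coordinate $x$ not identically zero on $Z$; then $\{x=0\}\cap Z$ is a proper closed subset, so by irreducibility its complement is dense, and by closedness of $Z$ this density forces $x$ to be non-constant on infinitely many curves of $\mathcal C$ (otherwise $Z$ would be contained in finitely many curves). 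On each such curve $D_\alpha$ I then choose a point $q_\alpha$ with $x(q_\alpha)\ne 0$ and with $q_\alpha\notin Z_j$ for the finitely many stages $j$ preceding the first $Z_{i(\alpha)}$ containing $D_\alpha$; this is possible over any field, since $D_\alpha$ is infinite while only finitely many points are excluded. The set $Q=\{q_\alpha\}$ then meets every $Z_i$ in a finite set, hence is closed in $Z$, is infinite, and avoids $p$, contradicting (b).

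Finally, if $\mathcal C$ is finite, the off-curve locus $S=Z\setminus\bigcup_{D\in\mathcal C}D$ meets each $Z_i$ in a finite set and is therefore closed and disjoint from $p$; if $S$ is infinite it contradicts (b), while if $S$ is finite then $Z$ is a finite-dimensional variety whose only positive-dimensional parts are curves through $p$, so irreducibility makes it a single irreducible curve, which has no infinite proper closed subset and thus contradicts (a). I expect the delicate point to be exactly this infinite-$\mathcal C$ case over an arbitrary (possibly countable) base field: fixing a single fibre $\{x=c\}$ can fail to be infinite, which is why the points $q_\alpha$ are instead selected inductively so as to keep $Q\cap Z_i$ finite.
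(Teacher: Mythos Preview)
Your argument has a genuine gap in the Zariski (Kambayashi) case, and a secondary error in the ind-topology shortcut.

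The secondary error first: it is \emph{not} true that two curve components through $p$ force $Z$ to be reducible in the ind-topology. Take $C_k^v=\{y=kx\}$ and $C_k^h=\{y=x^2+kx\}$ in $\A^2_{\kk}$, all passing through the origin, and set $Z_n=\bigcup_{k\le n}(C_k^v\cup C_k^h)$. Each $C_k^h$ meets each $C_j^v$ (for $j\ne k$) in a second point $(j-k,\,j(j-k))$; hence any ind-closed subset containing infinitely many of the $C_k^h$ meets every $C_j^v$ in infinitely many points and so contains it, and symmetrically. It follows that $Z$ is irreducible for the ind-topology despite having infinitely many curve components through the origin. (The remark following the lemma in the paper records the same phenomenon, without the common point.) Your $Q$-construction in the next paragraph does, however, work for the ind-topology and essentially coincides with the paper's argument there, so this slip is not fatal on that side.

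The real gap is that your $Q$-construction does \emph{not} treat the Zariski topology, which is what you intend it for. You conclude that $Q$ is closed because ``$Q$ meets every $Z_i$ in a finite set'', but that is precisely the criterion for \emph{ind}-closedness; in the Kambayashi topology one must exhibit $Q$ as the zero locus of a compatible system of regular functions, and a set meeting each $Z_i$ finitely need not be Zariski-closed. Your own motivating example already shows this: on the union of coordinate axes in $\A^\infty$, the set of unit points $\{e_k\}$ meets each stage finitely, yet every $f\in\kk[x_1,x_2,\ldots]$ vanishing on all $e_k$ involves only finitely many variables and hence also vanishes at $0$, so $\{e_k\}$ is not Zariski-closed. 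The device with the coordinate $x$ only guarantees $q_\alpha\ne p$; it contributes nothing towards cutting $Q$ out by functions. The paper supplies exactly the missing ingredient: it passes to dense affine opens $Z_i'\subseteq Z_i$ and inductively extends $f_{i-1}\in\kk[Z_{i-1}']$ to $f_i\in\kk[Z_i']$ with $f_i(p)=1$ and a new zero in $Z_i'\setminus Z_{i-1}'$; the common zero set of the $f_i$ is then Zariski-closed by definition, infinite, and misses $p$.
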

\begin{proof}
We let $Y\subseteq X$ be an irreducible closed subset, and want to prove that the topological space given by the points of $Y$ does not contain any attractive points.

Assume first that $Y$ is an algebraic variety of  dimension $\ge 2$.
Then, for any point $p\in Y$, we can find a closed curve $\Gamma\subseteq Y$ that does not pass through $p$.
Consequently, $Y$ does not contain any attractive points.
The same holds if there is a closed subset $Z\subseteq Y$ which is an algebraic variety of dimension $\ge 2$.
If $Y$ is a point or an irreducible curve, it does not contain any infinite closed proper subset, so does not contain any attractive points by definition.

There remains to study the case in which $Y$ is an ind-variety, being the union $Y_1\subseteq Y_2\subseteq Y_3\subseteq \dots$, where all $Y_i$ are algebraic varieties of dimension $\le 1$ and where $Y_i\not= Y_{i+1}$ for each $i$. We take a point $p\in Y$, and denote by $k$ an integer such that $p\in Y_k$. To obtain the result, we construct a closed set $F\subseteq Y$ that is infinite and does not contain $p$.

If $Y$ is endowed with the ind-topology,  we choose $q_i \in Y_i\backslash Y_{i-1}$ for any $i\ge k+1$,
and let $F$ be the union of all $q_i$.

If $Y$ is endowed with the Zariski topology, we take a sequence $Z_1\subseteq Z_2 \subseteq \dots $ of affine varieties of dimension $\le 1$,
where $Z_i$ is open and dense in $Y_i$ for each $i$; this corresponds to removing a finite number of points from each non-affine curve in $Y_i$. We can choose $p\in Z_k$ and $Z_i\not= Z_{i+1}$ for each $i$. For $i=1,\dots,k$ let $f_i\in \kk[Z_{i}]$  be the constant function $1$. Then, for each $i\ge k+1$ we choose a function $f_i\in \kk[Z_{i}]$ that restricts to $f_{i-1}$ on $Z_{i-1}$ and vanishes at some point of $Z_i\backslash Z_{i-1}$. The vanishing set $F'$ of the $(f_i)_{i\in \mathbb{N}}$ is infinite, closed in the union of the $Z_i$ and does not contain $p$. The set $F$ which we require can be chosen  as the closure of $F'$ in~$Y$.
\end{proof}
\begin{remark}
Let us note that the second case considered in the proof of Lemma \ref{attractive points of a closed irreducible subset of an ind-variety} can actually occur, in the sense that $Y$might actually be irreducible! Indeed, consider the ind-variety $Y$ given by the filtration $(Y_n)_{n \geq 1}$, where $Y_n$ is the zero set of $(x-1)(x-2) \ldots (x-n)(y-1)(y-2) \ldots (y-n)$ in the complex affine plane $\A^2_{\C} = {\rm Spec}\, \C[x,y]$; then $Y$ is irreducible for the ind-topology (and hence for the Zariski topology, which is weaker).
\end{remark}

The following result implies Theorem~\ref{thmZar}.

\begin{proposition}\label{Prop:attractive}
If $n\ge 2$, any element $\varphi\in \Cr_n(\kk)$ is an attractive point of an irreducible closed subset  $Y\subseteq \Cr_n(\kk)$.
\end{proposition}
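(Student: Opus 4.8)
The plan is to exhibit, for an arbitrary $\varphi\in\Cr_n(\kk)$, a specific irreducible closed subset $Y\subseteq\Cr_n(\kk)$ of dimension $2$ in which $\varphi$ is the unique point lying on every curve. The natural candidate is furnished by Example~\ref{TheExample}, whose associated set $V\subseteq\Bir(\p^n)_{\le 2}$ was shown in Lemma~\ref{Lem:PropertiesOfExample} to be closed, with $\rho\colon\hat V\to V$ a topological quotient map. The key structural feature, recorded in that example, is that $\rho$ contracts the entire line $L=\{b=c\}\subseteq\hat V$ to the single point $\id$, while inducing a bijection $\hat V\setminus L\to V\setminus\{\id\}$. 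First I would check that $\id$ is an attractive point of $V$: since $\rho$ is a quotient map, the closed subsets of $V$ are exactly the images of $\rho$-saturated closed subsets of $\hat V$, and any infinite such image must come from a closed set in $\hat V$ of dimension $\ge 1$, i.e.\ containing a curve. The crucial computation, already implicit in the remark following Proposition~\ref{Prop:led}, is that every irreducible curve $C\subseteq\hat V$ meets the contracted line $L$ (because in $\p^2$ two curves always intersect, and $L$ extends to a line of positive self-intersection), so its image $\rho(C)$ passes through $\id=\rho(L)$. Thus $\id$ lies on every curve of $V$, and since $V$ is an irreducible surface it contains infinite proper closed subsets, so $\id$ is attractive.

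To pass from $\id$ to an arbitrary $\varphi$, I would use the homogeneity of the Zariski topology on $\Bir(\p^n)$ recorded in the observations after Definition~\ref{defi: Zariski topology}: for any $\varphi$, left translation $\psi\mapsto\varphi\circ\psi$ is a homeomorphism of $\Cr_n(\kk)$ fixing nothing canonically but sending $\id$ to $\varphi$. Applying this homeomorphism to $V$ produces an irreducible closed subset $Y=\varphi\cdot V$ (closed because translation is a homeomorphism and $V$ is closed, irreducible because $\hat V$ is), in which $\varphi=\varphi\circ\id$ is an attractive point, the attractivity being a purely topological property preserved by homeomorphisms. This reduces the whole statement to the single case $\varphi=\id$.

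The step I expect to be the main obstacle is verifying rigorously that the closed subsets of $V$ are only points, $V$ itself, or images of curves through $\id$ — in other words, controlling the quotient topology precisely enough to conclude that \emph{every} infinite closed subset contains $\id$. Because $\rho$ collapses $L$ to a point but is otherwise injective, a saturated closed set $\tilde F\subseteq\hat V$ either misses $L$ entirely (in which case $\tilde F$ is a closed subset of the quasi-affine $\hat V\setminus L$, hence, if infinite, contains a curve $C$; but the closure of $C$ in $\hat V$ meets $L$, contradicting saturation unless one already accounts for the closure) or meets $L$, forcing $\id\in\rho(\tilde F)$. Making this dichotomy airtight requires the intersection-theoretic fact that no curve in $\hat V\subseteq\p^2$ has closure disjoint from $L$, which is exactly the self-intersection argument from the remark after Proposition~\ref{Prop:led}; I would invoke that $L$ is (the restriction of) a line in $\p^2$, so any curve's closure meets $\overline L$ in $\p^2$, and check that this intersection point lies in $\hat V$ rather than at the two removed points $(0:1:0),(0:0:1)$. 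Once this is settled the attractivity of $\id$ in $V$, and hence of $\varphi$ in $Y$, follows immediately, and together with Lemma~\ref{attractive points of a closed irreducible subset of an ind-variety} this yields Theorem~\ref{thmZar}.
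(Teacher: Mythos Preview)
Your proposal is correct and follows essentially the same route as the paper: reduce to $\varphi=\id$ by left translation, take $Y=V$ from Example~\ref{TheExample}, and use that $\rho\colon\hat V\to V$ is a closed quotient map together with the fact that every curve in $\hat V$ meets $L$. The one concern you flag---whether the intersection of a curve's projective closure with the line $\{b=c\}$ might fall at a removed point---dissolves immediately once you note that neither $(0:1:0)$ nor $(0:0:1)$ satisfies $b=c$, so the full projective line $\{b=c\}$ lies inside $\hat V$ and any closed curve of $\hat V$ already meets $L$.
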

\begin{proof}
We may assume that $\varphi$ is the identity, since the map $\Cr_n(\kk)\to \Cr_n(\kk)$
given by $\varphi'\mapsto \varphi\circ \varphi'$ is a homeomorphism.
We use the map $\rho\colon \hat{V}\to V\subseteq \Bir(\p^n)\cong \Cr_n(\kk)$ of Example~\ref{TheExample}. By Lemma~\ref{Lem:PropertiesOfExample}, $V$ is closed in $\Cr_n(\kk)$, and its topology is given by the quotient $\hat{V}\to V$. The set $\hat{V}$ is irreducible, contains infinitely many infinite closed subsets
(the irreducible ones corresponding to itself and curves),
and each intersects $L=(\rho)^{-1}(\mathrm{id})$.
This shows that $V$ is irreducible, contains infinitely many infinite closed subsets, and that each contains the identity.
\end{proof}

\section{Euclidean topology} \label{Sec:Trans}
On the points of a real or complex algebraic variety, we can put the Euclidean (also called transcendental) topology, which is finer than the Zariski topology. This gives any algebraic group the structure of a topological group. We  imitate this in this section, working over a local field $\k$. We will assume that $\k$ is locally compact and nondiscrete. Particular cases are $\C$, $\R$, $\mathbb{F}_q((t))$ or a  finite extension of $\mathbb{Q}_p$.

We fix some integer $n$, which will be the dimension, and we will prove that $\Cr_n(\k)$ is a topological group,
endowed with the \emph{Euclidean topology} which we will define. Note that the set of $\tilde{\k}$-points, where $\tilde{\k}$ is a subfield of $\k$, will also inherit the structure of a topological group, such as for example $\Cr_n(\mathbb{Q})$.\\

For $n=1$, the group $\Cr_n(\k)=\Aut(\p^1_\k)=\PGL(2,\k)$ is obviously a topological group, so we will  deal only with the cases in which $n\ge 2$. 

In subsection \ref{The Euclidean topology on Bir(p^n)_{le d}}, we define the Euclidean topology on  $\Bir(\p^n)_{\le d}$ and show that the natural inclusion $\Bir(\p^n)_{\le d}\hookrightarrow \Bir(\p^n)_{\le d+1}$ is 
a closed embedding. This allows us, in subsection \ref{The Euclidean topology on the Cremona group}, to define the Euclidean topology on $\Bir(\p^n)$ as the inductive limit topology induced by those of $\Bir(\p^n)_{\le d}$: a subset $F\subseteq \Bir(\p^n)$ is closed if and only if $F\cap \Bir(\p^n)_{\le d}$ is closed in $\Bir(\p^n)_{\le d}$ for each $d$.

\subsection{\boldmath The Euclidean topology on $\Bir(\p^n)_{\le d}$}
\label{The Euclidean topology on Bir(p^n)_{le d}}

We  use the notation  $W_d,H_d$ as in Definition~$\ref{DefWHG}$; these varieties are defined over any field. By Lemma~\ref{lem:WHalgebraic}, $W_d$ is a projective space and $H_d$ is locally closed in $W_d$ for the Zariski topology. 

We can thus put the Euclidean topology on the projective space $W_d$. For example, we say that the distance between $(x_0:\dots:x_r)$ and $(y_0:\dots:y_r)$ is equal to
$$\left(\sum_{i<j} |x_iy_j-y_jx_i|^2\right)/\left(\left(\sum_{i} |x_i|^2\right)\cdot \left(\sum_{i} |y_i|^2\right)\right)$$
(see \cite{Weyl}). We then put the induced topology on $H_d$.
Because of the behaviour of the Zariski topology of $\Bir(\p^n)$, it is natural to give the following definition:

\begin{definition}
The Euclidean topology on $\Bir(\p^n)_{\le d}$ will be the quotient topology induced by the surjective map $\pi_d\colon H_d\to \Bir(\p^n)_{\le d}$, where we put the Euclidean topology on $H_d$.
\end{definition}

If $f \colon X \to Y$ is a quotient map between topological spaces and  $A$ is a subspace of $X$, note that the induced map $A \to f(A)$ is not always a quotient map. However, this becomes true, if $A$ is open and $A= f^{-1}(f(A))$ \cite[I, \S3.6, Corollary~1]{Bou}. Since $H_{d,d}=(\pi_d)^{-1}(\Bir(\p^n)_{d})$ is open in $H_d$ for the Zariski topology (see Proposition~\ref{Prop:DegreD}) and hence also for the Euclidean topology,  $\pi_d$ restricts to a homeomorphism $(\pi_d)^{-1}(\Bir(\p^n)_{d})\to \Bir(\p^n)_{d}$, for any $d\ge 1$.

\begin{lemma} \label{Lemm:Sequential}
Let $d\ge 1$ be an integer. Endowed with the Euclidean topology, $W_d$ and $H_d$ are locally compact metric spaces. In particular, the sets $W_d,H_d,\Bir(\p^n)_{\le d}$ are sequential spaces: a subset $F$ is closed if the limit of every convergent sequence with values in $F$ belongs to $F$.
\end{lemma}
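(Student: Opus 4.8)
The plan is to establish the three successive claims of Lemma~\ref{Lemm:Sequential} in the order (i) $W_d$ is a locally compact metric space, (ii) $H_d$ inherits the same property as a locally closed subset, and (iii) local compactness and metrizability imply the sequential property, which then passes to the quotient $\Bir(\p^n)_{\le d}$. The first claim is the foundation: $W_d$ is, by Lemma~\ref{lem:WHalgebraic}(1), isomorphic to a projective space $\p^r$ over the local field $\k$. The displayed formula in the text defines an explicit distance on $\p^r(\k)$, so I would first verify that this formula genuinely defines a metric inducing the Euclidean topology. Here I would use that $\k$ is a locally compact nondiscrete field, so it carries an absolute value $|\cdot|$ making it a complete metric space, and the projective space over such a field is a compact metric space (covering it by the standard affine charts $\k^r$, each of which is locally compact). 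Thus $W_d=\p^r(\k)$ is in fact \emph{compact} metric, which is stronger than what we need.

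Next, for (ii), by Lemma~\ref{lem:WHalgebraic}(2) the set $H_d$ is locally closed in $W_d$ for the Zariski topology, hence also for the finer Euclidean topology; that is, $H_d$ is the intersection of an open and a closed subset of the compact metric space $W_d$. A subspace of a metric space is metric, so $H_d$ is metric. For local compactness I would argue that a locally closed subset of a locally compact (indeed compact) Hausdorff space is again locally compact: a closed subset of a compact space is compact, and an open subset of a locally compact Hausdorff space is locally compact, so the intersection is locally compact. This establishes that $H_d$ is a locally compact metric space.

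For (iii), I would invoke the standard topological fact that every metric space is first-countable, and every first-countable space is sequential, meaning that a subset is closed precisely when it is closed under limits of convergent sequences lying in it. This directly gives the stated criterion for $W_d$ and $H_d$. To transfer sequentiality to $\Bir(\p^n)_{\le d}$, which carries the quotient topology under the map $\pi_d\colon H_d\to\Bir(\p^n)_{\le d}$ (Corollary~\ref{Coro:QuotientBirPnd}), I would use that quotients of sequential spaces are sequential: the quotient of a sequential space is always sequential, since sequentiality is preserved under taking quotients. Hence $\Bir(\p^n)_{\le d}$ is sequential as well, completing all three assertions.

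The main obstacle I anticipate is purely at the level of the base field: one must be careful that the distance formula and the resulting topology on $W_d=\p^r(\k)$ behave well when $\k$ is a \emph{non-archimedean} local field (such as $\mathbb{F}_q((t))$ or a finite extension of $\mathbb{Q}_p$), not merely $\R$ or $\C$. The explicit ``distance'' formula displayed in the paper is archimedean-looking, so the honest statement is that what matters is only the \emph{topology} it induces, namely the standard topology on projective space over a locally compact nondiscrete field; for that topology compactness of $\p^r(\k)$ and local compactness of its locally closed subsets hold uniformly in all these cases. Once this subtlety about $\k$ is handled, the remaining implications (metric $\Rightarrow$ first-countable $\Rightarrow$ sequential, and sequential passing to quotients) are entirely formal.
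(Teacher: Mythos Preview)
Your proposal is correct and follows essentially the same route as the paper: $W_d$ is a (compact) metric projective space over the local field $\k$, $H_d$ is locally closed hence locally compact metric, and then one uses that metric spaces are sequential and that sequentiality passes to quotients. One small citation slip: the fact that $\Bir(\p^n)_{\le d}$ carries the quotient topology from $H_d$ in the Euclidean setting is the \emph{definition} given just before this lemma, not Corollary~\ref{Coro:QuotientBirPnd}, which concerns the Zariski topology.
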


\begin{proof}
By construction of the topology, $W_d$ and $H_d$ are metric spaces. Since $W_d$ is compact and $H_d$ is locally closed in $W_d$ (Lemma~\ref{lem:WHalgebraic}), $H_d$ is locally compact.
It remains to recall that metric spaces are sequential spaces and that quotients of sequential spaces are sequential (see \cite{Fra}).
\end{proof}

\begin{remark}
Following \cite{Bou}, we say that a map $f:X \to Y$ between two topological
spaces is \emph{proper} if it is continuous and universally closed
(i.e. for each topological space $Z$, the map
$f \times {\rm id}_Z : X \times Z \to Y \times Z$ is closed).
We also say that a topological space is locally compact if it is Hausdorff and if each of its points has a compact neighbourhood.
\end{remark}

\begin{lemma}\label{Lem:Closed}
For any $d\ge 1$, the following hold:
\begin{enumerate}
\item
the topological map $\pi_d\colon H_d\to \Bir(\p^n)_{\le d}$ is proper $($and closed$)$;
\item
the topological space $\Bir(\p^n)_{\le d}$ is locally compact $($and Hausdorff$)$.
\end{enumerate}
\end{lemma}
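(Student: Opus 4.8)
The plan is to reduce both assertions to the single key fact that $\pi_d\colon H_d\to\Bir(\p^n)_{\le d}$ is a \emph{proper} map in the sense of Bourbaki, and then harvest the standard consequences. First I would establish properness. Since $W_d\cong\p^r$ is compact and $H_d$ is locally closed in $W_d$ (Lemma~\ref{lem:WHalgebraic}), the inclusion $H_d\hookrightarrow W_d$ is not itself proper, so I cannot simply invoke compactness of the source. Instead, the natural route is to exhibit $\pi_d$ as the restriction of a proper map and use that the fibers are compact together with closedness. Concretely, for each $\varphi\in\Bir(\p^n)_{\le d}$ the fiber $(\pi_d)^{-1}(\varphi)$ is a Zariski-closed subset of $W_d$ (Lemma~\ref{lem:WHalgebraic}(4)), hence a closed subset of the compact space $W_d$ intersected with the locally closed $H_d$; I would argue these fibers are compact in the Euclidean topology. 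Combined with the fact (to be proved next) that $\pi_d$ is a closed map, a continuous map with compact fibers that is closed is proper \cite[I, \S10.2, Theorem~1]{Bou}, and properness is stable under base change by any $Z$, giving the "universally closed" formulation in the remark.

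The heart of the argument is therefore showing that $\pi_d$ is a \emph{closed} map for the Euclidean topology. Here I would exploit Lemma~\ref{Lemm:Sequential}: $H_d$ and $\Bir(\p^n)_{\le d}$ are sequential spaces, so a subset is closed if and only if it is sequentially closed, and a map out of $H_d$ is closed if and only if it sends sequentially closed sets to sequentially closed sets. Thus I only need the following sequential statement: if $h_k\in H_d$ is a sequence with $\pi_d(h_k)\to\varphi$ in $\Bir(\p^n)_{\le d}$, then after passing to a subsequence $h_k$ converges in $H_d$ to some $h$ with $\pi_d(h)=\varphi$. By compactness of $W_d$, the $h_k$ subconverge in $W_d$ to some $h\in W_d$; the real work is to verify that the limit lies in $H_d$ (i.e. still defines a birational map of degree $\le d$) and that $\pi_d(h)=\varphi$. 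The danger is degeneration: the components of the $h_k$ could acquire a common factor in the limit, dropping the degree, or the Jacobian could vanish. I expect this to be the \textbf{main obstacle}, and I would handle it using the closed set $Y\subseteq W_{d^{n-1}}\times W_d$ from the proof of Lemma~\ref{lem:WHalgebraic}(2): lifting each $h_k$ to a pair $(g_k,h_k)\in Y$, compactness of $Y$ lets me extract a convergent subsequence $(g_k,h_k)\to(g,h)\in Y$, and the composition-is-a-multiple-of-the-identity condition passes to the limit. This forces $\psi_h\circ\psi_g$ or $\psi_g\circ\psi_h$ to be a scalar multiple of the identity; one then checks the limiting scalar $a$ is nonzero precisely because $\pi_d(h_k)\to\varphi$ with $\varphi$ birational, so $h\in H_d$ and $\pi_d(h)=\varphi$.

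Once $\pi_d$ is shown to be proper, assertion $(1)$ is immediate, and a proper map is in particular closed by taking $Z$ to be a point. For assertion $(2)$, I would argue that $\Bir(\p^n)_{\le d}$ is the image of the locally compact space $H_d$ under a proper surjection, and proper images of locally compact spaces are locally compact \cite[I, \S10.2]{Bou}; alternatively, properness gives that the quotient is Hausdorff (the image of the closed diagonal-type relation is closed), and local compactness then follows since each point has a compact neighbourhood, namely the $\pi_d$-image of a compact neighbourhood of a point in its fiber. I would record the Hausdorff property explicitly, as it is the one genuinely new topological input beyond the quotient construction and is exactly what is needed later to build the Euclidean topology on all of $\Bir(\p^n)$ as an inductive limit.
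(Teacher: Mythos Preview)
Your overall architecture matches the paper's: use Bourbaki's criterion (a quotient map from a locally compact space is proper iff it is closed with compact fibres), verify compact fibres via Lemma~\ref{lem:WHalgebraic}(4), and then deduce local compactness of the quotient. The divergence, and the gap, is in how you argue that $\pi_d$ is closed.

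Your sequential reformulation starts from a convergence $\pi_d(h_k)\to\varphi$ \emph{in the quotient} and tries to lift it back to $H_d$. There are two circularities here. First, your proposed check that the limiting scalar $a$ is nonzero ``because $\varphi$ is birational'' has no content: you only know $\varphi\in\Bir(\p^n)_{\le d}$, but you have not yet related the $W_d$-limit $h$ to $\varphi$, and the pair $(g,h)\in Y$ can perfectly well satisfy $g\circ h=0$ (this is exactly the boundary case $h\in p_2(Y)\setminus U$ in the notation of Lemma~\ref{lem:WHalgebraic}). Second, even granting $h\in H_d$, continuity gives $\pi_d(h_k)\to\pi_d(h)$, and concluding $\pi_d(h)=\varphi$ needs uniqueness of limits, i.e.\ that the quotient is Hausdorff---which is assertion~(2), to be deduced \emph{from}~(1).

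The paper avoids both issues by never reasoning about convergence in the quotient. It proves directly that the \emph{saturation} $\hat F=\pi_d^{-1}(\pi_d(F))$ is sequentially closed in $H_d$: take $\varphi_i\in\hat F$ with $\varphi_i\to\varphi$ \emph{in $H_d$} (so $\varphi\in H_d$ is automatic), pass to a subsequence with $\deg\pi_d(\varphi_i)=m$ constant, and factorise $\varphi_i=a_i\cdot f_i$ with $a_i\in\p(\k[x_0,\dots,x_n]_{d-m})$ and $f_i\in W_m$. Compactness of these projective spaces gives limits $a,f$ with $\varphi=a\cdot f$; the companion elements $\varphi_i'=b_i\cdot f_i\in F$ then subconverge to $b\cdot f$, which lies in $H_d$ (it defines the same rational map as $\varphi$) and hence in $F$, so $\varphi\in\hat F$. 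This degree-stratified factorisation---peeling off the common factor rather than invoking $Y$---is the missing idea; once you adopt it, the rest of your plan goes through.
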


\begin{proof}
We recall the following  result of topology \cite[I, \S10.4, Proposition~9]{Bou}: if $f\colon X\to Y$ is a quotient map between topological spaces such that $X$ is locally compact,
then $f$ is proper if and only if it is closed and the preimages of points are compact.
This implies moreover that $Y$ is locally compact.

The fact that $(\pi_d)^{-1}(\varphi)$ is compact for any $\varphi\in \Bir(\p^n)_{\le d}$ follows from Lemma~\ref{lem:WHalgebraic}, since $(\pi_d)^{-1}(\varphi)$ is closed in the compact space $W_d$. Since the topological space $H_d$ is locally compact (Lemma~\ref{Lemm:Sequential}), it suffices to see that $\pi_d$ is closed.

We let $F\subseteq H_d$ be a closed subset, and want to prove that $\pi_d(F)$ is closed in $\Bir(\p^n)_{\le d}$, which amounts to showing that the saturated set $\hat{F}=(\pi_d)^{-1}(\pi_d(F))$ is closed in $H_d$. For this, we take a sequence $(\varphi_i)_{i\in \mathbb{N}}$ of elements in $\hat{F}$, which converges to $\varphi\in H_d$, and show that $\varphi\in \hat{F}$. 

Since the map $\pi_d$ is continuous by construction, the sequence $(\pi_d(\varphi_i))_{i\in \mathbb{N}}$ converges to $\pi_d(\varphi)$ in $\Bir(\p^n)_{\le d}$.
Replacing $(\varphi_i)_{i\in \mathbb{N}}$ by a subsequence, we may assume that the degree of all $(\pi_d(\varphi_i))_{i\in \mathbb{N}}$ is constant, equal to $m\le d$. If $m=d$, then $(\pi_d)^{-1}(\pi_d(\varphi_i))=\{\varphi_i\}$ for each $i$, so each $\varphi_i$ belongs to $F$, which implies that $\varphi\in F\subseteq \hat{F}$ and completes the proof. 
We may thus assume that $m<d$ (and hence that $d\ge 2$), and denote by $k$ the difference $d-m\ge 1$. For each $i$, there exists a non-zero homogeneous polynomial $a_i\in\k[x_0,\dots,x_n]$ of degree $k$ such that 
$$\varphi_i=(a_i f_{i,0}:\dots:a_i f_{i,n}),$$
and $(f_{i,0}:\dots: f_{i,n})\in W_{m}$ corresponds to a birational map of degree $m< d$. 
Since each $a_i$ is defined up to a constant, and since $\p(\k[x_0,\dots,x_n])$ is compact, we can take a subsequence and assume that  $(a_i)_{i\in \mathbb{N}}$ converges to a non-zero homogeneous polynomial $a\in\k[x_0,\dots,x_n]$ of degree $k$.

We can then again take a subsequence and assume that $\{(f_{i,0}:\dots: f_{i,n})\}_{i\in \N}$
converges to an element $(f_0:\dots:f_n)$  of the set $W_{m}$, which is also a projective space. Since $(\varphi_i)_{i\in \mathbb{N}}$ converges to $\varphi$, we get
$\varphi=(af_0:\dots:af_n)$ in $H_{d}$.

For each $i$,  there exists $\varphi_i'\in F$ with $\pi_d(\varphi_i')=\pi_d(\varphi_i)$ (because $\varphi_i\in \hat{F}=(\pi_d)^{-1}(\pi_d(F))$). We then have
$\varphi_i'=(b_i f_{i,0}:\dots:b_i f_{i,n}),$
for some non-zero homogeneous polynomial $b_i\in\k[x_0,\dots,x_n]$ of degree $k$. We may again assume that the sequence $(b_i)_{i\in \mathbb{N}}$ converges to a non-zero homogeneous polynomial $b\in\k[x_0,\dots,x_n]$ of degree $k$. In particular, the sequence $(\varphi_i')_{i\in \mathbb{N}}$ converges to $(bf_0:\dots:bf_n)$, which is in $F$ since $F$ is closed. This implies that $\varphi=(af_0:\dots:af_n)$ belongs to~$\hat{F}$.
\end{proof}

\begin{remark}
For any $d\ge 2$, the map $\pi_d\colon H_d\to \Bir(\p^n)_{\le d}$ is not open. To see this, we define 
$f_m=(x_0x_2^{d-1}:x_1(x_2^{d-1}+\frac{1}{m}x_0^{d-1}):x_2^{d}:x_3x_2^{d-1}:\dots:x_nx_2^{d-1})\in H_d$ for  $m\ge 1$.
Since $\{f_m\}_{m\in \mathbb{N}}$ converges in $H_d$ to $f_\infty \in \pi_d^{-1}(\mathrm{id})$,  we get a sequence $\{\pi_d(f_m)\}_{m\in \mathbb{N}}$ of elements of degree $d$ converging to the identity. 
Let $g$ be any element of  $\pi_d^{-1}(\mathrm{id})$  such that $g \neq f_{\infty}$ and let $U \subseteq H_d$ be an open neighbourhood of $g$  not containing any of the $f_m$. Then,  $\pi_d(U)$ is not open since $\{\pi_d(f_m)\}_{m\in \mathbb{N}}$ converges to $\mathrm{id} \in \pi_d(U)$ but $\pi_d(U)$ does not contain any of the $\pi_d(f_m)$.\end{remark}

\begin{lemma}\label{Lem:Embbeddings}For any positive integer $d$,
the natural injection $\iota_d\colon \Bir(\p^n)_{\le d}\hookrightarrow \Bir(\p^n)_{\le d+1}$ is 
a closed embedding, i.e.\ a homeomorphism onto its image, which is closed in $\Bir(\p^n)_{\le d+1}$.
\end{lemma}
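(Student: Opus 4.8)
The plan is to realise both $\Bir(\p^n)_{\le d}$ and $\Bir(\p^n)_{\le d+1}$ as quotients of the metric, locally compact spaces $H_d$ and $H_{d+1}$ (Lemma~\ref{Lemm:Sequential}) and to lift everything to this level, where the Euclidean topology is directly controlled. The central tool is multiplication by $x_0$: sending a form of degree $d$ to its product with $x_0$ is an injective linear map $\k[x_0,\dots,x_n]_d\to\k[x_0,\dots,x_n]_{d+1}$, hence induces an algebraic closed embedding $j\colon W_d\hookrightarrow W_{d+1}$. Since multiplying all coordinates by the common factor $x_0$ does not change the associated birational map, one checks that $j(H_d)=j(W_d)\cap H_{d+1}$, so that $j(H_d)$ is closed in $H_{d+1}$, and that $\pi_{d+1}\circ j=\iota_d\circ\pi_d$. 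As the restriction of a closed embedding of compact spaces, $j$ restricts to a homeomorphism $H_d\to j(H_d)$.

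First I would prove the continuity of $\iota_d$. Since $\Bir(\p^n)_{\le d}$ carries the quotient topology from $\pi_d$, and since $\iota_d\circ\pi_d=\pi_{d+1}\circ j$ is continuous (as $j$ is algebraic and $\pi_{d+1}$ is continuous by construction), the universal property of the quotient map $\pi_d$ yields the continuity of $\iota_d$.

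The main step is to show that $\iota_d$ is a \emph{closed} map; together with injectivity and continuity this makes it a homeomorphism onto its image, and applying it to the whole space shows that the image $\Bir(\p^n)_{\le d}$ is closed in $\Bir(\p^n)_{\le d+1}$, giving the desired closed embedding. Here I would use that $\Bir(\p^n)_{\le d+1}$ is sequential (Lemma~\ref{Lemm:Sequential}) and that $\pi_{d+1}$ is proper (Lemma~\ref{Lem:Closed}). Let $C\subseteq\Bir(\p^n)_{\le d}$ be closed, so $\pi_d^{-1}(C)$ is closed in $H_d$, and let $(\varphi_i)$ be a sequence in $C$ converging in $\Bir(\p^n)_{\le d+1}$ to some $\varphi$. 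Choosing lifts $h_i\in\pi_d^{-1}(C)$, the points $j(h_i)$ lie in $\pi_{d+1}^{-1}(K)$, where $K=\{\varphi\}\cup\{\varphi_i\}$ is compact; by properness of $\pi_{d+1}$ this preimage is compact, so, $H_{d+1}$ being metric, a subsequence $j(h_{i_k})$ converges to some $g\in H_{d+1}$ with $\pi_{d+1}(g)=\varphi$. Since $j(H_d)$ is closed, $g=j(h)$ for some $h\in H_d$; as $j$ is an embedding, $h_{i_k}\to h$ in $H_d$, whence $h\in\pi_d^{-1}(C)$ and $\varphi=\pi_{d+1}(j(h))=\pi_d(h)\in C$. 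Thus $\iota_d(C)=C$ is closed.

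The step I expect to be the main obstacle is this last one, namely transferring convergence from the quotient topology of $\Bir(\p^n)_{\le d+1}$ down to $\Bir(\p^n)_{\le d}$: a priori the subspace topology on $\Bir(\p^n)_{\le d}$ could be strictly coarser than its own quotient topology, and it is precisely this comparison that the statement forbids. Properness of $\pi_{d+1}$ (compactness of fibres together with closedness, Lemma~\ref{Lem:Closed}) is exactly what lets me extract a convergent lift and thereby pin $\varphi$ down inside $C$; the other delicate point is the identity $j(H_d)=j(W_d)\cap H_{d+1}$, which guarantees that the limit of the lifts stays in $j(H_d)$ rather than escaping to a genuinely degree-$(d+1)$ point. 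Everything else is formal manipulation of quotient maps.
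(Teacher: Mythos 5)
Your proof is correct, and it is built on the same lift as the paper's: the closed immersion $\widehat{\iota_d}\colon H_d\to H_{d+1}$, $(f_0:\dots:f_n)\mapsto(x_0f_0:\dots:x_0f_n)$, the commutative square $\iota_d\circ\pi_d=\pi_{d+1}\circ\widehat{\iota_d}$, and the universal property of the quotient $\pi_d$ for continuity. Where you diverge is the closedness step. You run a sequential-compactness argument: properness of $\pi_{d+1}$ (Lemma~\ref{Lem:Closed}) gives a compact fibre over $K=\{\varphi\}\cup\{\varphi_i\}$, metrisability of $H_{d+1}$ extracts a convergent lift, and the identity $j(H_d)=j(W_d)\cap H_{d+1}$ keeps the limit inside $j(H_d)$; this works, though you should say explicitly that Hausdorffness of $\Bir(\p^n)_{\le d+1}$ (also from Lemma~\ref{Lem:Closed}) is what identifies $\pi_{d+1}(g)$ with $\varphi$. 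The paper's route is purely formal and shorter: since $\widehat{\iota_d}$ and $\pi_{d+1}$ are both closed maps, so is the composite $\iota_d\circ\pi_d$, and then for closed $C\subseteq\Bir(\p^n)_{\le d}$ one has $\iota_d(C)=(\iota_d\circ\pi_d)\bigl(\pi_d^{-1}(C)\bigr)$, closed by surjectivity and continuity of $\pi_d$ --- no sequences, no properness, no compactness extraction needed. Your version makes explicit the point the paper leaves implicit (that $j(H_d)$ is closed in $H_{d+1}$, which is what makes $\widehat{\iota_d}$ a closed map of the locally closed sets $H_d\to H_{d+1}$ and not merely of the ambient projective spaces), at the cost of redoing by hand work that Lemma~\ref{Lem:Closed} already packaged; the formal argument generalises more readily, while yours is self-contained at the level of convergent sequences.
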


\begin{proof}
We define a map $\widehat{\iota_d}\colon H_d \to H_{d+1}$ by $\widehat{\iota_d} ((f_0:\dots:f_n))=(x_0f_0:\dots:x_0f_n)$.
It is a morphism of algebraic varieties, which is a closed immersion; it is thus continuous and closed with respect to the Euclidean topology. We have the following commutative diagram: 
$$\xymatrix@R=4mm@C=2cm{H_d\ar[d]^{\pi_d}\ar[r]^{\widehat{\iota_d}}& H_{d+1}\ar[d]^{\pi_{d+1}}\\
\Bir(\p^n)_{\le d}\ar[r]^{{\iota_d}}& \Bir(\p^n)_{\le d+1}.
}$$

$a)$ The continuity of $\widehat{\iota_d}$ directly implies the continuity of  $\iota_d$. Indeed, if $U$ is an open subset of $\Bir(\p^n)_{\le d+1}$,  the equality  $(\pi_d)^{-1}((\iota_d)^{-1}(U)) =(\pi_{d+1}\widehat{\iota_d})^{-1}(U)$ shows that  $(\pi_d)^{-1}((\iota_d)^{-1}(U))$ is open in $H_d$, i.e.\ that  $(\iota_d)^{-1}(U)$ is open in 
$\Bir(\p^n)_{\le d}$.

$b)$  It is clear that $\iota_d$ is injective. We only need to prove that it is closed. Since $\pi_{d+1}$ and $\hat{\iota_d}$ are closed, so is $\pi_{d+1}\circ\hat{\iota_d}=\iota_d \circ \pi_d$.
Because $\pi_d$ is continuous and surjective, this implies that $\iota_d$ is closed.
\end{proof}

\subsection{The Euclidean topology on the Cremona group} \label{The Euclidean topology on the Cremona group}

Using Lemma~\ref{Lem:Embbeddings}, one can put on $\Bir(\p^n)$ the inductive limit topology given by the $\Bir(\p^n)_{\le d}$: a subset of $\Bir(\p^n)$ is closed (respectively open)
if and only if its intersection with each $\Bir(\p^n)_{\le d}$ is closed (respectively open).
In particular, the injections $\Bir(\p^n)_{\le d}\hookrightarrow \Bir(\p^n)$ are closed embeddings.
As explained earlier, the topology defined here is called the \emph{Euclidean topology of  $\Bir(\p^n)$}.

In this subsection we show that $\Bir(\p^n)$, endowed with the Euclidean topology, is a topological group.

\begin{lemma} \label{Lemm:InverseBounded}
For any $d\ge 1$, the map $I_d\colon \Bir(\p^n)_{\le d}\to \G{d^{n-1}}$ which sends an element onto its inverse is continuous.
\end{lemma}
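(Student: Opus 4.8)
The plan is to reduce the continuity of the inversion map $I_d$ to a statement about convergent sequences, since by Lemma~\ref{Lemm:Sequential} the spaces $\Bir(\p^n)_{\le d}$ and $\G{d^{n-1}}$ are sequential. Thus I would let $(\varphi_i)_{i\in\N}$ be a sequence in $\Bir(\p^n)_{\le d}$ converging to some $\varphi$, and aim to show that $(\varphi_i^{-1})_{i\in\N}$ converges to $\varphi^{-1}$ in $\G{d^{n-1}}$. To work with explicit coordinates I would lift the situation to the parameter spaces $H_d$: using that $\pi_d\colon H_d\to \Bir(\p^n)_{\le d}$ is a quotient map (Corollary~\ref{Coro:QuotientBirPnd}) together with the compactness of the fibers and of $W_d$ from Lemma~\ref{Lem:Closed}, I can choose lifts $h_i\in H_d$ of $\varphi_i$ and, after passing to a subsequence, assume they converge to a lift $h$ of $\varphi$.

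The core computation is then to produce the inverses at the level of polynomials and control their degrees and limits. Recall from the proof of Lemma~\ref{lem:WHalgebraic}(2) that every $f\in H_d$ admits a partner $g\in W_{d^{n-1}}$ with $(g,f)\in Y$, i.e.\ $g_j(f_0,\dots,f_n)=a\cdot x_j$ for a common factor $a$ of degree $d^n-1$; when $\psi_f$ is birational this $g$ gives its inverse. The key point I would exploit is that $Y\subseteq W_{d^{n-1}}\times W_d$ is closed and $W_{d^{n-1}}$ is complete, so the projection $p_2\colon Y\to W_d$ is a closed, proper map with compact fibers. Hence, given the converging lifts $h_i=f_i\to f=h$ in $H_d$, I can choose $g_i\in W_{d^{n-1}}$ with $(g_i,f_i)\in Y$, and by compactness of the fibers and closedness of $Y$ pass to a convergent subsequence $g_i\to g$ with $(g,f)\in Y$. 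Since the $\psi_{f_i}$ are birational with bounded inverse degree $\le d^{n-1}$ (by \cite[Theorem 1.5, page 292]{BCW}), each $g_i$ represents $\varphi_i^{-1}$ in $\G{d^{n-1}}$, and the limit $g$ represents $\varphi^{-1}$.

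To finish I would argue that the convergence $g_i\to g$ in $W_{d^{n-1}}$ descends to convergence $\pi_{d^{n-1}}(g_i)=\varphi_i^{-1}\to \pi_{d^{n-1}}(g)=\varphi^{-1}$ in $\G{d^{n-1}}$, using the continuity of $\pi_{d^{n-1}}$ established in Lemma~\ref{Lem:Closed}. A subsequence subtlety must be handled: I have only shown that \emph{some} subsequence of $(\varphi_i^{-1})$ converges to $\varphi^{-1}$. To conclude that the full sequence converges, I would invoke the standard principle that a sequence in a sequential (indeed metrizable, for each bounded-degree stratum) Hausdorff space converges to a point $\ell$ as soon as every subsequence has a further subsequence converging to $\ell$; applying the preceding argument to an arbitrary subsequence yields exactly this.

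The main obstacle I anticipate is bookkeeping the common factors and degrees so that the limiting data $(g,f)$ genuinely lands in $Y$ and genuinely represents $\varphi^{-1}$ rather than a degenerate or spurious element: a priori the inverse of $\varphi_i$ could drop degree in the limit, or the common factor $a_i$ could degenerate to zero. The uniform degree bound $d^{n-1}$ on inverses is what keeps everything inside the fixed ambient space $W_{d^{n-1}}$, and the properness of $p_2\colon Y\to W_d$ is what guarantees a limit partner exists; carefully combining these — together with the fact that $\pi_{d^{n-1}}$ is well-defined on $H_{d^{n-1}}\subseteq W_{d^{n-1}}$ and continuous — is the crux of the argument.
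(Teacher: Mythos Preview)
Your argument is correct. Both your approach and the paper's rest on the same structural ingredients: the closed incidence variety $Y\subseteq W_{d^{n-1}}\times W_d$ from Lemma~\ref{lem:WHalgebraic} and the compactness of $W_{d^{n-1}}$, which together keep inverses under control. The difference is in packaging. You argue sequentially: lift a convergent sequence through $\pi_d$ (using the properness of $\pi_d$ from Lemma~\ref{Lem:Closed}), choose partners in $Y$, extract convergent subsequences in the compact $W_{d^{n-1}}$, and then invoke the subsequence principle to upgrade subsequential convergence to full convergence. The paper instead works directly with closed sets: it sets $L=Y\cap(U'\times U)$ with projections $\eta_1\colon L\to H_{d^{n-1}}$ and $\eta_2\colon L\to H_d$, observes that $\eta_2$ is a closed map because $W_{d^{n-1}}$ is compact, and uses the identity $\eta_2\bigl((\pi_{d^{n-1}}\eta_1)^{-1}(F)\bigr)=(I_d\pi_d)^{-1}(F)$ to conclude that preimages of closed sets under $I_d$ are closed. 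The paper's route is a bit more economical---no repeated subsequence extractions and no appeal to the sequential characterisation of continuity---while yours is more concrete about what happens to individual elements. One small remark: your citation of Corollary~\ref{Coro:QuotientBirPnd} concerns the Zariski topology; for the Euclidean lifting step it is really the properness of $\pi_d$ in Lemma~\ref{Lem:Closed} that you are using.
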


\begin{proof}

As in the proof of Lemma~\ref{lem:WHalgebraic}, we define $Y\subseteq W_{d^{n-1}} \times W_d$ to be the set of elements $(g,f)$ such that $(g_0(f_0,\ldots,f_n), \ldots,g_n(f_0,\ldots,f_n))$  is a multiple (maybe~$0$) of the identity and let $U \subseteq W_d$ (resp. $U' \subseteq W_{d^{n-1}}$) be the set of elements having a non-zero Jacobian.

As we have already observed, $Y$ is closed in $W_{d^{n-1}} \times W_d $ and $U$ is open in $W_d$,
so that $L:= Y \cap (W_{d^{n-1}} \times U) = Y \cap (U' \times U)$ is locally closed in $W_{d^{n-1}} \times W_d$
(it is an algebraic variety).

The projection on the second factor induces
a surjective morphism $\eta_2 \colon L \to H_d$.
The projection on the first factor is a morphism
$\eta_1 \colon L\to H_{d^{n-1}}$,
which is not surjective in general.
By construction, we have the following commutative diagram:
$$\xymatrix@R=4mm@C=1cm{
H_d\ar[d]^{\pi_{d}}& L\ar[l]_{\eta_2}\ar[r]^{\eta_1}&H_{d^{n-1}}\ar[d]^{\pi_{{d^{n-1}}}}\\
\Bir(\p^n)_{\le d}\ar[rr]^{I_{d}}&& \G{{d^{n-1}}}.
}$$

We claim that $\eta_2$ is a closed map, for the Euclidean topology.
Since  $W_{d^{n-1}}$ is compact, the second projection  $ W_{d^{n-1}} \times W_d \to W_d$ is a closed map. Its restriction to the closed subset $Y \subseteq  W_{d^{n-1}} \times W_d$ obviously yields a closed map $\eta'_2 : Y \to W_d$. Finally, let us recall that if $\varphi: A \to B$ is any continuous closed map between topological spaces and if $C$ is any subset of $B$, then $\varphi$ induces a continous closed map $\varphi ^{-1}(C) \to C$.
Therefore, since $L= (\eta'_2)^{-1}(H_d)$, the claim is proved.

For any subset $F$ of $\G{{d^{n-1}}}$, we have $\eta_2((\pi_{d^{n-1}}\eta_1)^{-1}(F))=(I_d\pi_d)^{-1}(F)$.
Indeed, both sets correspond to elements $(f_0:\dots:f_n)\in W_d$
such that the rational map $\psi_f$ is the inverse of an element of $F$.

Assume that $F$ is closed in $\G{{d^{n-1}}}$.
Since $\eta_1$ and $\pi_{d^{n-1}}$ are continuous for the Euclidean topology,
the set $F_L=(\pi_{d^{n-1}}\eta_1)^{-1}(F)$ is closed in $L$. Therefore $\pi_d^{-1}(I_d^{-1}(F))=(I_d\pi_d)^{-1}(F)=\eta_2(F_L)$ is closed in $H_d$, which implies that $I_d^{-1}(F)$ is closed in $\Bir(\p^n)_{\le d}$.
\end{proof}

\begin{corollary} \label{Cor:InverseBounded}
The map $I\colon \Bir(\p^n)\to \Bir(\p^n)$ which sends a map onto its inverse is a homeomorphism.
\end{corollary}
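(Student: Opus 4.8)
The plan is to deduce Corollary~\ref{Cor:InverseBounded} from Lemma~\ref{Lemm:InverseBounded} by promoting the degreewise continuity of the inverse map to continuity on all of $\Bir(\p^n)$ for the inductive limit topology. Since $\Bir(\p^n)$ carries the inductive limit topology of the closed embeddings $\Bir(\p^n)_{\le d}\hookrightarrow \Bir(\p^n)$, a map out of $\Bir(\p^n)$ is continuous precisely when its restriction to each $\Bir(\p^n)_{\le d}$ is continuous. So the first step is to show that $I\colon \Bir(\p^n)\to \Bir(\p^n)$ is continuous, and by symmetry (since $I$ is an involution, $I=I^{-1}$) this will immediately give that $I$ is a homeomorphism.

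To prove continuity of $I$, I would check that for each $d$ the restriction $I|_{\Bir(\p^n)_{\le d}}\colon \Bir(\p^n)_{\le d}\to \Bir(\p^n)$ is continuous. Lemma~\ref{Lemm:InverseBounded} tells us exactly this: the map $I_d\colon \Bir(\p^n)_{\le d}\to \Bir(\p^n)_{\le d^{n-1}}$ sending an element to its inverse is continuous, using the bound that the inverse of a degree-$d$ map has degree at most $d^{n-1}$ (\cite[Theorem 1.5, page 292]{BCW}). Composing with the closed embedding $\Bir(\p^n)_{\le d^{n-1}}\hookrightarrow \Bir(\p^n)$, which is continuous by the construction of the inductive limit topology, shows that $I|_{\Bir(\p^n)_{\le d}}$ is continuous as a map into $\Bir(\p^n)$. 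The only mild subtlety is to note that $I$ genuinely restricts to $I_d$ on $\Bir(\p^n)_{\le d}$, i.e.\ that the set-theoretic inverse of a degree-$\le d$ element has degree $\le d^{n-1}$ and so lands in the stated target; this is precisely the degree bound invoked in Lemma~\ref{Lemm:InverseBounded}.

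The main (and essentially only) obstacle is already absorbed into Lemma~\ref{Lemm:InverseBounded}, which does the real work of establishing continuity at bounded degree via the correspondence variety $L$ and the properness of the projection $\eta_2$. Given that lemma, the passage to the inductive limit is a formal consequence of the universal property of the inductive limit topology, so I expect no genuine difficulty. I would therefore write the proof in two short lines: continuity of $I$ follows from Lemma~\ref{Lemm:InverseBounded} together with the inductive limit description of the topology on $\Bir(\p^n)$, and since $I\circ I=\id$, the continuous involution $I$ is its own continuous inverse, hence a homeomorphism.

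\begin{proof}
Recall that $\Bir(\p^n)$ carries the inductive limit topology induced by the closed embeddings $\Bir(\p^n)_{\le d}\hookrightarrow \Bir(\p^n)$; thus a map $f\colon \Bir(\p^n)\to \Bir(\p^n)$ is continuous if and only if its restriction to each $\Bir(\p^n)_{\le d}$ is continuous. Since the inverse of a birational map of degree $\le d$ has degree $\le d^{n-1}$ (\cite[Theorem 1.5, page 292]{BCW}), the map $I$ restricts on $\Bir(\p^n)_{\le d}$ to the map $I_d\colon \Bir(\p^n)_{\le d}\to \G{d^{n-1}}$, which is continuous by Lemma~\ref{Lemm:InverseBounded}. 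Composing $I_d$ with the (continuous) closed embedding $\G{d^{n-1}}\hookrightarrow \Bir(\p^n)$ shows that $I|_{\Bir(\p^n)_{\le d}}\colon \Bir(\p^n)_{\le d}\to \Bir(\p^n)$ is continuous for each $d$. Hence $I$ is continuous. Finally, $I\circ I=\id$, so $I$ is a continuous involution and is therefore equal to its own inverse; in particular $I$ is a homeomorphism.
\end{proof}
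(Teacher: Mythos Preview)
Your proof is correct and follows essentially the same approach as the paper: reduce to continuity of $I$ (since $I$ is an involution), use the inductive limit description of the topology to reduce to continuity of each restriction $I|_{\Bir(\p^n)_{\le d}}$, and invoke Lemma~\ref{Lemm:InverseBounded} together with the degree bound $d^{n-1}$ on the inverse. The paper's proof is slightly terser but the logic is identical.
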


\begin{proof}
Because $I$ is its own inverse, we only need to prove that $I$ is continuous.
As we mentioned before, the degree of the inverse of a birational transformation of $\p^n$ of degree $d$ has degree at most $d^{n-1}$.
Consequently, $I$ restricts to an injective map $I_d\colon \Bir(\p^n)_{\le d}\to \G{d^{n-1}}$, for any $d\ge 1$.
Because of the definition of the topology of $\Bir(\p^n)$, it suffices to prove that $I_d$ is continuous for each $d$. This follows from Lemma~\ref{Lemm:InverseBounded}.
\end{proof}
\begin{lemma}\label{Lem:ProductBounded}
For any $d,k$, the map $\chi_{d,k}\colon \Bir(\p^n)_{\le d}\times \G{k}\to \G{dk}$ which sends $(\varphi_1,\varphi_2)$
onto $\varphi_1\circ \varphi_2$ is continuous.
\end{lemma}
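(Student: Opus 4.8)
The plan is to reduce the continuity of $\chi_{d,k}$ to the continuity of an honest morphism of algebraic varieties ``upstairs'', exactly in the spirit of the proofs of Lemmas~\ref{Lemm:InverseBounded} and \ref{Lem:Closed}, by exploiting that the parametrising maps $\pi_d$ are quotient maps. First I would introduce the composition morphism at the level of the $H_d$. Sending a pair $(f,g)=\big((f_0:\dots:f_n),(g_0:\dots:g_n)\big)\in H_d\times H_k$ to the tuple $(f_0(g_0,\dots,g_n):\dots:f_n(g_0,\dots,g_n))$ of homogeneous polynomials of degree $dk$ is well defined, independently of the chosen representatives: scaling $f$ scales the tuple by a constant, and scaling $g$ by $\mu$ scales it by $\mu^d$ since each $f_i$ is homogeneous of degree $d$. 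Moreover the resulting tuple is non-zero and lies in $H_{dk}$, because $\psi_f\circ\psi_g$ is birational (Definition~\ref{DefWHG}). Hence substitution defines a morphism of algebraic varieties $c\colon H_d\times H_k\to H_{dk}$, which is therefore continuous for the Euclidean topology. By construction $\pi_{dk}\circ c=\chi_{d,k}\circ(\pi_d\times\pi_k)$, since both maps send $(f,g)$ to $\psi_f\circ\psi_g$; this is the commutative square on which the whole argument rests.

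The heart of the matter is to prove that $\pi_d\times\pi_k\colon H_d\times H_k\to\Bir(\p^n)_{\le d}\times\G{k}$ is a topological quotient map. I expect this to be the main obstacle, precisely because a product of quotient maps need not be a quotient map. To get around this I would appeal to properness: by Lemma~\ref{Lem:Closed} each $\pi_d$ is proper, and a product of proper maps is proper (see \cite{Bou}), so $\pi_d\times\pi_k$ is proper. In particular it is closed; being also continuous and surjective (each factor $\pi_d$ is surjective), it is a quotient map, as a continuous closed surjection always is.

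With the quotient-map property established, the conclusion is immediate. A map out of a quotient is continuous if and only if its composite with the quotient map is continuous; here $\chi_{d,k}\circ(\pi_d\times\pi_k)=\pi_{dk}\circ c$ is a composite of two continuous maps, so $\chi_{d,k}$ is continuous, as claimed.

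As an alternative avoiding the properness lemma, one could try to verify continuity via sequences, since $H_d\times H_k$ is a locally compact metric space (Lemma~\ref{Lemm:Sequential}) and the target $\G{dk}$ is sequential: a convergent sequence $(f_i,g_i)\to(f,g)$ in $H_d\times H_k$ is carried by the continuous $c$ to a convergent sequence, and one would transport convergence downward through the $\pi$'s. However, turning this into a genuine proof of continuity of $\chi_{d,k}$ still requires controlling how arbitrary sequences in the product of quotients lift to $H_d\times H_k$, so the properness route above appears cleaner and is consistent with the method already used for the inverse map.
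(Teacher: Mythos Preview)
Your proposal is correct and follows essentially the same approach as the paper: set up the commutative square with the substitution morphism $H_d\times H_k\to H_{dk}$, and deduce continuity of $\chi_{d,k}$ from the fact that $\pi_d\times\pi_k$ is a quotient map. The paper first phrases the quotient-map step via Whitehead's theorem (using local compactness of the intermediate factors) and then gives exactly your properness argument as an alternative; your write-up matches the latter route.
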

\begin{proof}
We use the following commutative diagram:
$$\xymatrix@R=10mm@C=1.5cm{
H_d\times H_k\ar[d]^{\pi_{d}\times \pi_k}\ar[r]^{\hat{\chi}_{d,k}}& H_{dk}\ar[d]^{\pi_{dk}}\\
\Bir(\p^n)_{\le d}\times \G{k}\ar[r]^{\chi_{d,k}}& \G{dk},
}$$
where ${\hat{\chi}_{d,k}}$ sends $((f_0:\dots:f_n),(g_0:\dots:g_n))$ onto $(f_0(g_0,\dots,g_n):\dots:f_n(g_0,\dots,g_n))$.
Since ${\hat{\chi}_{d,k}}$ is a morphism of algebraic varieties, it is continuous (for the Euclidean topology).
For any closed subset $F\subseteq \G{dk}$, the set $(\pi_{dk}{\hat{\chi}_{d,k}})^{-1}(F)$ is then closed in $H_d\times H_k$.
Since the diagram is commutative, we have $(\pi_{dk}{\hat{\chi}_{d,k}})^{-1}(F)  =(\pi_d\times \pi_k)^{-1}(K)$,
where $K=({\chi}_{d,k})^{-1}(F)$. It remains to prove that $\pi_{d}\times \pi_k$ is a quotient map, which will imply that $K$ is closed and thus give the continuity of $\chi_{d,k}$.

If $f: X \to Y$ is a quotient map between topological spaces and $Z$ is locally compact, a theorem of Whitehead  \cite[Lemma 4]{White} asserts that
$f \times {\rm id} : X \times Z \to Y \times Z$ is a quotient map.
More generally, if $f:X \to Y$ and $g:Z \to W$ are quotient maps and 
$Y$ and $Z$ are locally compact, then the product $f \times g:X \times Z \to Y \times W$ is a quotient map
(use the Whitehead theorem twice, since $f \times g=({\rm id} \times g) \circ (f \times {\rm id})$).

Alternatively, one can avoid using Whitehead's theorem
by noting that the product of two proper maps is proper
(\cite[I, \S10.1, Proposition~4]{Bou}),
so that $\pi_{d}\times \pi_k$ is proper and hence closed.
This implies that $\pi_{d}\times \pi_k$ is a quotient map.
\end{proof}
\begin{corollary} \label{Cor:ProductBounded}
The map $P:\Bir(\p^n)\times \Bir(\p^n)\to \Bir(\p^n)$ which sends $(\varphi,\varphi')$ onto $\varphi\circ \varphi'$
is continuous $($where we endow $\Bir(\p^n)\times \Bir(\p^n)$ with the product topology of the Euclidean topology
of $\Bir(\p^n_\k))$.
\end{corollary}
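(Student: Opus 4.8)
The plan is to upgrade the bounded-degree statement of Lemma~\ref{Lem:ProductBounded} to the whole group, using the inductive limit description of the Euclidean topology on $\Bir(\p^n)$.

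First I would check that $P$ is continuous on each ``box''. For fixed $d$, the restriction of $P$ to $\G{d}\times\G{d}$ has image in $\G{d^2}$ and equals the composition of $\chi_{d,d}\colon\G{d}\times\G{d}\to\G{d^2}$ with the inclusion $\G{d^2}\hookrightarrow\Bir(\p^n)$. By Lemma~\ref{Lem:ProductBounded} the map $\chi_{d,d}$ is continuous, and the inclusion is a closed embedding by Lemma~\ref{Lem:Embbeddings} together with the definition of the inductive limit topology; hence the restriction of $P$ to $\G{d}\times\G{d}$ is continuous for every $d$. Equivalently, for any closed $F\subseteq\Bir(\p^n)$ the set $P^{-1}(F)\cap(\G{d}\times\G{d})=(\chi_{d,d})^{-1}(F\cap\G{d^2})$ is closed in $\G{d}\times\G{d}$ for each $d$.

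The key remaining point is that the product topology on $\Bir(\p^n)\times\Bir(\p^n)$ coincides with the inductive limit topology determined by the closed subspaces $\G{d}\times\G{d}$. Granting this, $P^{-1}(F)$ is closed because its trace on each $\G{d}\times\G{d}$ is closed, and the continuity of $P$ follows from the first step. One inclusion of topologies holds automatically: each inclusion $\G{d}\times\G{d}\hookrightarrow\Bir(\p^n)\times\Bir(\p^n)$ is continuous for the product topology (being a product of continuous factor inclusions), so the product topology is coarser than the inductive limit topology.

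The hard part, and the main obstacle, is the reverse inclusion: that a set meeting every $\G{d}\times\G{d}$ in an open set is already open for the product topology. This is the assertion that a countable strict inductive limit of locally compact spaces commutes with finite products; it fails for arbitrary inductive limits, and the decisive input is the local compactness of each $\G{d}$ (Lemma~\ref{Lem:Closed}, part (2)) together with the fact that the maps $\G{d}\hookrightarrow\G{d+1}$ are closed embeddings (Lemma~\ref{Lem:Embbeddings}). Concretely, for a point of some $\G{m}\times\G{m}$ lying in such a set $W$, one uses compact neighbourhoods in the successive $\G{d}$, $d\ge m$, to build a product-open neighbourhood contained in $W$ by a standard diagonal exhaustion; alternatively one invokes the Whitehead-type theorem already used in the proof of Lemma~\ref{Lem:ProductBounded}, applied to the quotient maps $\pi_d$, to identify the relevant quotient topologies. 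Combined with the continuity of the inverse map (Corollary~\ref{Cor:InverseBounded}), this indeed makes $\Bir(\p^n)$ a topological group for the Euclidean topology.
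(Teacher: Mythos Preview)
Your approach matches the paper's: reduce to the bounded-degree composition $\chi_{d,k}$ of Lemma~\ref{Lem:ProductBounded} and then pass to the limit. The paper's proof is a two-line reduction that simply writes ``Because of the definition of the topology of $\Bir(\p^n)$, it suffices to prove that the restriction $\chi_{d,k}$ is continuous for each $d,k$''; you go further and correctly isolate the one non-formal step the paper leaves implicit, namely that the product topology on $\Bir(\p^n)\times\Bir(\p^n)$ agrees with the inductive limit of the boxes $\G{d}\times\G{d}$, and you are right that this genuinely requires the local compactness of each $\G{d}$ established in Lemma~\ref{Lem:Closed} together with the closed-embedding property of Lemma~\ref{Lem:Embbeddings}.
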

\begin{proof}Because of the definition of the topology of $\Bir(\p^n)$, it suffices to prove that the restriction $\chi_{d,k}\colon \Bir(\p^n)_{\le d}\times \G{k}\to \G{dk}$ is continuous for each $d,k$. This follows from Lemma~\ref{Lem:ProductBounded}.
\end{proof}

Corollaries  \ref{Cor:InverseBounded} and \ref{Cor:ProductBounded} complete the proof that $\Bir(\p^n)$ is a topological group.

\subsection{Restriction of the topology on algebraic subgroups}

As we saw in $\S\ref{SubSec:algsubgroups}$, an algebraic subgroup of $\Bir(\p^n)$ corresponds to a Zariski-closed subgroup $G \subseteq \Bir(\p^n)$ of bounded degree; moreover there exists an algebraic group $K$, together with a morphism  $K\to \Bir(\p^n)$ inducing a homeomorphism $\pi\colon K\to G$, which is a group homomorphism (Corollary~\ref{Cor:AlgGnotconnected}).

\begin{proposition} \label{Prop:TopologyAlgSubgroups}
Let $G\subseteq \Bir(\p^n)$ be a Zariski-closed subgroup of bounded degree, and let $K$ be its associated algebraic group $($as in Corollary~$\ref{Cor:AlgGnotconnected})$. Putting on $G$ the restriction of the Euclidean topology of $ \Bir(\p^n)$, we obtain the Euclidean topology of the algebraic group $K$, via the bijection $\pi \colon K\to G$, which becomes a homeomorphism.
\end{proposition}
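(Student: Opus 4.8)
The plan is to deduce everything from the properness of $\pi_d\colon H_d\to\Bir(\p^n)_{\le d}$ established in Lemma~\ref{Lem:Closed}, treating first the connected case and then reducing the general case to it through the group structure. Since $G$ has bounded degree, fix $d$ with $G\subseteq\Bir(\p^n)_{\le d}$. As the inclusions $\Bir(\p^n)_{\le d}\hookrightarrow\Bir(\p^n)$ are closed embeddings (iterating Lemma~\ref{Lem:Embbeddings}), the Euclidean topology of $\Bir(\p^n)$ restricts on $G$ to the subspace topology coming from $\Bir(\p^n)_{\le d}$; it thus suffices to compare, via $\pi$, the Euclidean topology of $K$ with this subspace topology.

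\emph{Connected case.} Assume $G$ connected and choose $d$ minimal, so that by Proposition~\ref{Prop:ClosedSubgroups} the group $K$ is realised as a closed subvariety of $H_d$ with $\pi=\pi_d|_K$. The Euclidean topology of $H_d$ is by definition the subspace topology from $W_d$, and that of the closed subvariety $K$ is in turn the subspace topology from $H_d$; in particular it is the transcendental topology of the algebraic group $K$. Now $\pi_d$ is continuous, hence so is $\pi\colon K\to G$; and since $\pi_d$ is proper (Lemma~\ref{Lem:Closed}) and $K$ is closed in $H_d$, the restriction $\pi_d|_K\colon K\to\Bir(\p^n)_{\le d}$ is proper as a composite of proper maps, in particular closed. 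A continuous closed bijection is a homeomorphism, so $\pi\colon K\to G$ is a homeomorphism for the Euclidean topologies.

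\emph{General case.} Write $G=\bigsqcup_{i=1}^m g_iG^0$, the finite union of the cosets of the identity component $G^0$, which is a closed connected subgroup of bounded degree. These cosets are Zariski-clopen, hence Euclidean-clopen, in $G$, and likewise $K=\bigsqcup_i K_i$ with $K_i=\pi^{-1}(g_iG^0)$ Euclidean-clopen in $K$; it therefore suffices to treat each $\pi|_{K_i}\colon K_i\to g_iG^0$. Choosing $\tilde g_i\in K$ with $\pi(\tilde g_i)=g_i$, left-translation by $\tilde g_i$ is an isomorphism of varieties $K^0\to K_i$, left-translation by $g_i$ is a homeomorphism $G^0\to g_iG^0$ (here we use that $\Bir(\p^n)$ is a topological group, Corollaries~\ref{Cor:InverseBounded} and~\ref{Cor:ProductBounded}), and since $\pi$ is a group homomorphism these two translations intertwine $\pi|_{K^0}$ and $\pi|_{K_i}$. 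As $\pi|_{K^0}$ is a homeomorphism by the connected case, so is each $\pi|_{K_i}$, and gluing the finitely many clopen pieces completes the proof.

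The step I expect to be the crux is the continuity of the inverse of $\pi$, equivalently the closedness of $\pi$ in the Euclidean topology: this is precisely where the properness of $\pi_d$ and the realisation of $K$ as a closed subvariety of $H_d$ are indispensable. The point is that $\pi_d$ is in general not open (as noted after Lemma~\ref{Lem:Closed}), so one cannot argue by openness and must instead exploit properness. By contrast, the reduction of the general case to the connected one is routine once $\Bir(\p^n)$ is known to be a topological group.
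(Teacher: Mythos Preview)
Your proof is correct and follows essentially the same route as the paper: reduce to the connected case, realise $K$ as a Zariski-closed (hence Euclidean-closed) subset of $H_d$ via Proposition~\ref{Prop:ClosedSubgroups}, and then use the properness/closedness of $\pi_d$ from Lemma~\ref{Lem:Closed} to conclude that the continuous bijection $\pi=\pi_d|_K$ is closed, hence a homeomorphism. The paper's proof is terser---it dismisses the reduction to the connected case in one line (``via the action of $G$ on itself by multiplication'')---whereas you spell out the coset decomposition and invoke the topological-group structure explicitly, but the substance is the same.
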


\begin{proof}
Via the action of $G$ on itself by multiplication, we may restrict ourselves to the case where $G$ is connected. In this case, we may assume that $K$ is a Zariski-closed subset of $H_d$ and that  $\pi \colon K\to G$ is induced by $\pi_d \colon H_d \to  \Bir(\p^n)_{\leq d}$ (see Proposition~\ref{Prop:ClosedSubgroups}). Then, $K$ is also closed for the Euclidean topology. The map $\pi_d$ restricts to a bijection $K\to G$, which is closed and continuous for the Euclidean topology (Lemma~\ref{Lem:Closed}) and is thus a homeomorphism.
\end{proof}

\subsection{Properties of the Euclidean topology of $\Bir(\p^n)$}

\begin{lemma}
The topological group  $\Bir(\p^n)$ is Hausdorff.
\end{lemma}

\begin{proof} 
We recall that a topological group is Hausdorff
if and only if the trivial one-element subgroup is closed (see \cite[III, \S2.5, Proposition 13]{Bou}). 
Since any point of $\Bir(\p^n)$ is closed (being closed in some $\Bir(\p^n)_{\le d}$, by Lemma~\ref{lem:WHalgebraic}),
this implies that $\Bir(\p^n)$ is Hausdorff.
\end{proof}

\begin{lemma} \label{compact sets have bounded degree}
Any compact subset of $\Bir(\p^n)$ is contained in some $\Bir(\p^n)_{\le d}$.
\end{lemma}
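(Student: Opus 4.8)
The plan is to argue by contradiction, exploiting the fact that the Euclidean topology on $\Bir(\p^n)$ is the inductive limit of the closed subspaces $\Bir(\p^n)_{\le d}$. Suppose $C\subseteq \Bir(\p^n)$ is compact but not contained in any $\Bir(\p^n)_{\le d}$. Then for every $d$ the set $C\setminus \Bir(\p^n)_{\le d}$ is non-empty, which allows one to extract inductively a sequence $(\varphi_i)_{i\in\N}$ of elements of $C$ whose degrees are strictly increasing: having chosen $\varphi_1,\dots,\varphi_i$ of degrees bounded by some $d_i$, I would pick $\varphi_{i+1}\in C\setminus \Bir(\p^n)_{\le d_i}$, forcing $\deg\varphi_{i+1}>d_i$. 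In particular the $\varphi_i$ are pairwise distinct and of unbounded degree.

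First I would show that the set $S=\{\varphi_i : i\in\N\}$, together with each of its subsets, is closed in $\Bir(\p^n)$. By the definition of the inductive limit topology (subsection~\ref{The Euclidean topology on the Cremona group}), a subset $F\subseteq\Bir(\p^n)$ is closed precisely when $F\cap \Bir(\p^n)_{\le d}$ is closed in $\Bir(\p^n)_{\le d}$ for every $d$. For any subset $T\subseteq S$, the intersection $T\cap \Bir(\p^n)_{\le d}$ consists only of those $\varphi_i$ of degree $\le d$, hence is finite; and finite sets are closed because $\Bir(\p^n)_{\le d}$ is Hausdorff (Lemma~\ref{Lem:Closed}). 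Thus every subset of $S$ is closed in $\Bir(\p^n)$, which amounts to saying that $S$ is closed and carries the discrete topology.

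Now I would conclude: since $S\subseteq C$ is closed in $\Bir(\p^n)$, it is closed in $C$, hence compact as a closed subset of a compact space. But $S$ is an infinite discrete space, and such a space is never compact, as its open cover by singletons admits no finite subcover. This contradiction shows that $C$ must be contained in some $\Bir(\p^n)_{\le d}$.

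The argument is essentially the standard fact that in a strict inductive limit of a tower of closed embeddings (provided by Lemma~\ref{Lem:Embbeddings}) every compact set lands in a single stage; the only genuine verification is the closedness and discreteness of $S$, which rests entirely on the two ingredients already available, namely the inductive-limit characterization of closed sets and the Hausdorffness of each $\Bir(\p^n)_{\le d}$ (Lemma~\ref{Lem:Closed}). The one mild point requiring care is ensuring that the extracted sequence genuinely has unbounded degree, but this is immediate from the assumption that $C$ escapes every $\Bir(\p^n)_{\le d}$, so I do not expect any serious obstacle.
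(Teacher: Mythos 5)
Your argument is correct and follows essentially the same route as the paper: extract a sequence of strictly increasing degrees, observe that the set of its terms meets each $\Bir(\p^n)_{\le d}$ in a finite (hence closed) set so that it is an infinite closed discrete subset of the compact set, and derive a contradiction. The only difference is that you spell out the Hausdorff/finite-set justification and the extraction step in more detail than the paper does.
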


\begin{proof}
Assume by contradiction that $K$ is a compact subset of $\Bir(\p^n)$ containing a sequence $(\varphi_i)_{i \in \mathbb{N}}$ with $\deg(\varphi_{i+1})>\deg ( \varphi_i)$ for each $i$.
Since  $K':=\{ \varphi _i\ |\  i \in \N \}$ is a closed subset of the compact set $K$, it should be compact. However, the intersection of any subset of $K'$ with $\Bir(\p^n)_{\le d}$ is closed, so  $K'$ is an infinite set endowed with the discrete topology, hence it cannot be compact.
\end{proof}

\begin{corollary}\label{Coro:SeqDegr}
Any convergent sequence of $\Bir(\p^n)$ has bounded degree.
\end{corollary}

\begin{proof}
Indeed, if the sequence $(\varphi_i)_{i \in \mathbb{N}}$ of $\Bir(\p^n)$
converges to $\varphi$, then $K:= \{ \varphi _i\ |\  i \in \N \} \cup \{ \varphi \}$ is compact.
\end{proof}

\begin{lemma}
For  $n\ge 2$, the topological space $\Bir(\p^n)$ is not locally compact.
\end{lemma}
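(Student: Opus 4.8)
The plan is to argue by contradiction, exploiting the inductive-limit structure of the Euclidean topology together with the fact, established in Corollary~\ref{Coro:SeqDegr}, that every convergent sequence in $\Bir(\p^n)$ has bounded degree. Suppose $\Bir(\p^n)$ were locally compact. Then the identity $\id$ would have a compact neighbourhood $K$, and by Lemma~\ref{compact sets have bounded degree} this $K$, being compact, is contained in some $\Bir(\p^n)_{\le d}$. Consequently $\Bir(\p^n)_{\le d}$ would itself be a neighbourhood of $\id$ in $\Bir(\p^n)$, since it contains the neighbourhood $K$. My goal is to contradict this by producing a sequence of elements of arbitrarily large degree converging to $\id$ in the Euclidean topology; such a sequence, being eventually outside $\Bir(\p^n)_{\le d}$, shows that no $\Bir(\p^n)_{\le d}$ can be a neighbourhood of $\id$.

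The key step is therefore the explicit construction of elements $g_m$ of unbounded degree converging to the identity. Here I would reuse exactly the kind of degeneration already exhibited in the remark following Lemma~\ref{Lem:Closed}: for each fixed degree $e\ge 2$ the elements
$$f_m=\left(x_0x_2^{e-1}:x_1\bigl(x_2^{e-1}+\tfrac1m x_0^{e-1}\bigr):x_2^{e}:x_3x_2^{e-1}:\dots:x_nx_2^{e-1}\right)\in H_e$$
have $\pi_e(f_m)$ of degree exactly $e$, yet as $m\to\infty$ they converge in $H_e$ to an element of $\pi_e^{-1}(\id)$, so $\pi_e(f_m)\to \id$ in $\Bir(\p^n)_{\le e}$ and hence in $\Bir(\p^n)$. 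By a diagonal argument I can then, for each $e$, choose $m=m(e)$ large enough that $\pi_e(f_{m(e)})$ lies within distance $1/e$ of $\id$ (using that $\pi_e$ is continuous and the Euclidean topology on $\Bir(\p^n)_{\le e}$ is a quotient of a metric space), and set $g_e=\pi_e(f_{m(e)})$. The sequence $(g_e)_{e\ge 2}$ then has $\deg(g_e)=e\to\infty$ while $g_e\to\id$.

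Finally I would close the contradiction. The sequence $(g_e)$ converges to $\id$ in $\Bir(\p^n)$ but has unbounded degree, directly contradicting Corollary~\ref{Coro:SeqDegr}. (Equivalently, and more in the spirit of the local-compactness hypothesis: since $g_e\to\id$ and $\Bir(\p^n)_{\le d}$ was supposed to be a neighbourhood of $\id$, all but finitely many $g_e$ would lie in $\Bir(\p^n)_{\le d}$, forcing $\deg(g_e)\le d$ for large $e$, which is false.) This contradiction shows $\Bir(\p^n)$ is not locally compact. The main obstacle I anticipate is verifying carefully that the $f_m$ really do converge in $H_e$ to a point of the fibre $\pi_e^{-1}(\id)$ and that the resulting $\pi_e(f_m)$ genuinely have degree $e$ rather than collapsing to smaller degree; both are settled by inspecting the polynomial $x_2^{e-1}+\tfrac1m x_0^{e-1}$, which is coprime to $x_2^{e-1}$ for finite $m$ but degenerates to $x_2^{e-1}$ as $m\to\infty$, so the apparent common factor appears only in the limit. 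The rest is a routine appeal to the already-established continuity and metrizability statements (Lemmas~\ref{Lemm:Sequential} and~\ref{Lem:Closed}) and to Corollary~\ref{Coro:SeqDegr}.
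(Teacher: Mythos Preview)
Your overall strategy coincides with the paper's: show that every open neighbourhood of $\id$ contains elements of arbitrarily large degree, so by Lemma~\ref{compact sets have bounded degree} no such neighbourhood can sit inside a compact set. Your explicit families $\pi_e(f_m)$ are essentially the paper's maps $f_{m,k}\colon(x_1,\dots,x_n)\mapsto(x_1+\tfrac1k x_2^m,x_2,\dots,x_n)$, and your observation that for each fixed $e$ the sequence $(\pi_e(f_m))_m$ converges to $\id$ in $\Bir(\p^n)_{\le e}$, hence in $\Bir(\p^n)$, is correct and is exactly what is needed.

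The gap is the diagonal step. Your claim that the diagonal sequence $(g_e)_{e\ge2}$ with $\deg g_e=e$ converges to $\id$ in $\Bir(\p^n)$ is false, and indeed the very Corollary~\ref{Coro:SeqDegr} you invoke forbids it. Concretely, the set $S=\{g_e:e\ge2\}$ meets each $\Bir(\p^n)_{\le d}$ in a finite set, so $S$ is closed in the inductive limit topology; its complement is then an open neighbourhood of $\id$ containing no $g_e$. The phrase ``within distance $1/e$ of $\id$'' is also not meaningful: $\Bir(\p^n)_{\le e}$ is only known to be a quotient of a metric space, not a metric space, and $\Bir(\p^n)$ itself is not metrisable. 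The inductive limit topology is strictly finer than anything a diagonal ``$1/e$''-argument can detect.

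The repair is to drop the diagonalisation entirely. You have already shown that for any open neighbourhood $U$ of $\id$ and any $e\ge2$, the sequence $(\pi_e(f_m))_m$ eventually enters $U$; since these elements have degree exactly $e$, the neighbourhood $U$ contains elements of every degree. This immediately contradicts $U\subseteq\Bir(\p^n)_{\le d}$, and this is precisely how the paper concludes.
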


\begin{proof} 
Let  $U\subseteq \Bir(\p^n)$ be an open neighbourhood of the identity. Let us show that $U$ is not contained in any compact subset of $\Bir(\p^n)$.
By Lemma  \ref{compact sets have bounded degree}, it suffices to show that $U$ contains elements of arbitrarily large degree. For any integers $m,k\ge 1$, we consider the birational map $f_{m,k}$ of $\A^n_\k$ given by
\begin{center}$f_{m,k}\colon (x_1,x_2,\dots,x_n)\dasharrow \left(x_1 + \frac{1}{k} x_2^m,x_2,x_3,\dots,x_n\right).$\end{center}
Fixing $m$, we observe that the sequence $\{f_{m,k}\}_{k \geq 1}$ converges to the identity. In particular, $f_{m,k}$ belongs to $U$ when $k$ is large enough.
\end{proof}

\begin{lemma}
For  $n\ge 2$, the topological space $\Bir(\p^n)$ is not metrisable.
\end{lemma}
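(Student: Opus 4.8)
The plan is to prove the \emph{stronger} statement that $\Bir(\p^n)$ fails to be first-countable at the identity, since this already precludes metrisability. The strategy mimics the classical proof that a strict inductive limit such as $\R^\infty=\varinjlim_k \R^k$ of proper closed subspaces is not first-countable: I exploit two features established earlier, namely that $\Bir(\p^n)$ carries the inductive limit topology of the closed subspaces $\Bir(\p^n)_{\le d}$ (so a subset is closed exactly when it meets each $\Bir(\p^n)_{\le d}$ in a closed set), and that every neighbourhood of $\id$ contains elements of arbitrarily high degree.

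Concretely, I would argue by contradiction. Suppose $\{U_i\}_{i\in\N}$ were a countable neighbourhood basis of $\id$. The maps $f_{m,k}$ of the previous lemma have degree exactly $m$ and, for fixed $m$, converge to $\id$ as $k\to\infty$; hence every neighbourhood of $\id$ contains elements of degree exactly $m$, for each $m\ge 2$. I would therefore choose inductively elements $\varphi_i\in U_i$ whose degrees $d_i$ satisfy $2\le d_1<d_2<\cdots$, picking at the $i$-th stage some $f_{d_i,k}\in U_i$ with $d_i>d_{i-1}$. In particular each $\varphi_i\ne\id$, since $d_i\ge 2$.

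The key step is to check that $A=\{\varphi_i : i\in\N\}$ is closed in $\Bir(\p^n)$. For each $m$, the intersection $A\cap\Bir(\p^n)_{\le m}=\{\varphi_i : d_i\le m\}$ is finite, because the degrees are strictly increasing, and hence closed, as $\Bir(\p^n)_{\le m}$ is Hausdorff by Lemma~\ref{Lem:Closed}. By the defining property of the inductive limit topology, $A$ is then closed in $\Bir(\p^n)$. Since $\id\notin A$, the complement $W=\Bir(\p^n)\setminus A$ is an open neighbourhood of $\id$, so there should be some $i$ with $U_i\subseteq W$; but $\varphi_i\in U_i\cap A$ contradicts this. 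Thus $\id$ admits no countable neighbourhood basis, $\Bir(\p^n)$ is not first-countable, and a fortiori not metrisable.

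The main (and essentially only) delicate point is the verification that $A$ is closed, which is where the structure of the topology genuinely enters: it relies simultaneously on the \emph{strictness} of the filtration (so that each stratum meets $A$ in a finite set) and on the Hausdorff property of the strata. One could alternatively route the argument through Corollary~\ref{Coro:SeqDegr}, which asserts that convergent sequences have bounded degree, but the diagonal argument against a hypothetical countable basis is the most transparent path and spares us the need to produce a sequentially closed set that fails to be closed.
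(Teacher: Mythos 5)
Your proof is correct, and it takes a genuinely different route from the paper's. The paper restricts to the closed subgroup $\k[X]\hookrightarrow\Aut(\A^n)$ of maps $(x_1,\dots,x_n)\mapsto(x_1+P(x_2),x_2,\dots,x_n)$, identifies the induced topology as the strict inductive limit topology of the finite-dimensional subspaces $\k[X]_{\le d}$, and exhibits the uncountable family of open sets $U_l=\{\sum a_iX^i\ :\ |a_i|<1/l_i\}$ to conclude that this LF-type space is not first countable. You instead run the standard diagonal argument directly on $\Bir(\p^n)$: assuming a countable neighbourhood basis $\{U_i\}$ of $\id$, you use the maps $f_{m,k}$ from the preceding lemma (every neighbourhood of $\id$ contains elements of each degree $m\ge 2$) to select $\varphi_i\in U_i$ of strictly increasing degrees, and then observe that $A=\{\varphi_i\}$ meets each $\Bir(\p^n)_{\le m}$ in a finite (hence closed, by Hausdorffness of the strata from Lemma~\ref{Lem:Closed}) set, so $A$ is closed in the inductive limit topology and its complement is a neighbourhood of $\id$ containing no $U_i$. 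All the ingredients you invoke are established in the paper, and the verification that $A$ is closed is exactly the point that needs care; you handle it correctly. Your version is arguably more self-contained, since the paper's route tacitly requires knowing that the subspace topology on the closed subgroup $\k[X]$ agrees with the inductive limit topology of the $\k[X]_{\le d}$, a point the paper asserts rather than proves; what the paper's approach buys in exchange is the identification of a concrete, familiar closed subgroup of $\Bir(\p^n)$ carrying the classical LF-topology, which is of independent interest.
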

\begin{proof}
We consider the set $\k[X]$ of polynomials in one variable and let $\k[X]\hookrightarrow \Aut(\A^n)\subset \Bir(\p^n)$ be the inclusion sending $P$ to
\begin{center}$(x_1,x_2,\dots,x_n)\dasharrow \left(x_1 + P(x_2), x_2, x_3, \dots, x_n \right)$. \end{center}
Note that $\k[X]$ is closed in $\Bir(\p^n)$, and that for any $d$, the induced topology on 
$\k[X]_{\le d}$ is the topology as a vector space (or as an algebraic group). The induced topology on $\k[X]$ is thus the inductive limit topology given by 
\begin{center}$ \k[X]_{\le 1}\subseteq \k[X]_{\le 2}\subseteq \dots$\end{center} For any sequence $l=(l_n)_{n\in \mathbb{N}}$ of positive integers, the set $U_l=\{\sum_{i=0}^d a_i X^i\ |\ |a_i|<1/l_i\}$ is open in $\k[X]$. This implies that $\k[X]$ is not first countable and thus not metrisable. The same holds for $\Bir(\p^n)$.
\end{proof}

\begin{lemma}
The topological group $\Bir(\p^n_\mathbb{C})$ is compactly generated if and only if $n\le 2$.
\end{lemma}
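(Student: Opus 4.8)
The plan is to first reduce the property of being compactly generated to being generated, as an abstract group, by a single bounded-degree stratum $\G{d}$, and then to handle the two ranges of $n$ separately. For the reduction, suppose $\Bir(\p^n)$ is compactly generated, say $\Bir(\p^n)=\langle K\rangle$ for some compact $K$. By Lemma~\ref{compact sets have bounded degree}, $K\subseteq \G{d}$ for some $d$, hence $\Bir(\p^n)=\langle K\rangle\subseteq\langle \G{d}\rangle\subseteq\Bir(\p^n)$, so $\Bir(\p^n)=\langle \G{d}\rangle$. Thus, to prove impossibility for $n\ge 3$ it suffices to show that no $\G{d}$ generates $\Bir(\p^n)$; for $n\le 2$ I will instead exhibit an honestly compact generating set directly, since $\G{d}$ itself is not compact.

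\emph{The case $n\le 2$.} Here I would build a compact generating set explicitly. The subgroup $\Aut(\p^n)=\PGL(n+1,\C)=\G{1}$ inherits its classical Euclidean topology (Proposition~\ref{Prop:TopologyAlgSubgroups}), under which it is a connected Lie group; being connected and locally compact, it is generated by the closure $\overline{U}$ of any relatively compact neighbourhood $U$ of the identity (indeed $\langle U\rangle$ is an open, hence closed, subgroup of a connected group), and $\overline{U}$ is compact. For $n=1$ we have $\Bir(\p^1)=\PGL(2,\C)$, so $\overline{U}$ already generates. For $n=2$, the Noether--Castelnuovo theorem gives $\Bir(\p^2)=\langle\,\PGL(3,\C),\sigma\,\rangle$, where $\sigma\colon(x:y:z)\dasharrow(yz:xz:xy)$ is the standard quadratic involution; then $K=\overline{U}\cup\{\sigma\}$ is compact and $\langle K\rangle=\Bir(\p^2)$. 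In both cases $\Bir(\p^n)$ is compactly generated.

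\emph{The case $n\ge 3$.} By the reduction it remains to show that $\Bir(\p^n)\ne\langle \G{d}\rangle$ for every $d$. This is precisely the theorem of I.~Pan asserting that, for $n\ge 3$, the Cremona group is not generated by its transformations of bounded degree. Consequently no $\G{d}$ generates $\Bir(\p^n)$, and by the reduction $\Bir(\p^n)$ cannot be compactly generated.

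The main obstacle is the non-generation statement for $n\ge 3$. The elementary degree bound $\deg(f\circ g)\le\deg f\cdot\deg g$ only gives $\langle \G{d}\rangle\subseteq\bigcup_k \G{d^k}$, which is not a contradiction; ruling out bounded-degree generation therefore genuinely requires Pan's finer invariant rather than a degree count, whereas everything else is bookkeeping built on Lemma~\ref{compact sets have bounded degree} and Proposition~\ref{Prop:TopologyAlgSubgroups}.
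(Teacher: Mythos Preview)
Your proposal is correct and follows essentially the same approach as the paper: compact generation of $\PGL(n+1,\C)$ for $n=1$, Noether--Castelnuovo plus the standard quadratic involution for $n=2$, and for $n\ge 3$ the reduction via Lemma~\ref{compact sets have bounded degree} to Pan's non-generation result. The paper additionally indicates the mechanism behind Pan's theorem (the birational type of the hypersurfaces contracted by elements of $\G{d}$ is bounded), while you merely cite it; conversely, you spell out why a connected Lie group is compactly generated and why a naive degree count cannot replace Pan's argument, details the paper leaves implicit.
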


\begin{proof}The group $\Bir(\p^1_\mathbb{C})=\PGL(2,\C)$ is a linear algebraic group, hence is compactly generated.
By the classical Noether-Castelnuovo theorem, the group $\Bir(\p^2_\mathbb{C})$ is generated by $\Aut(\p^2_\C)=\PGL(3,\C)$ and by the standard quadratic transformation $\sigma=(x:y:z)\dasharrow (yz:xz:xy)$. Since the linear algebraic group $\Aut(\p^2_\C)=\PGL(3,\C)$ is compactly generated, so is $\Bir(\p^2_\mathbb{C})$.

For $n \geq 3$, the group  $\Bir(\p^n_\mathbb{C})$ is not generated by $\Bir(\p^n_\mathbb{C})_{\le d}$ for any integer~$d$. This  can be seen by showing that the birational type of the hypersurfaces which are contracted by some element of $\Bir(\p^n_\mathbb{C})_{\le d}$ is bounded (see \cite{Pan} for more details). The fact that $\Bir(\p^n_\mathbb{C})$ is not compactly  generated follows from Lemma~\ref{compact sets have bounded degree}.
\end{proof}

\begin{remark}In \cite[Th\'eor\`eme 5.1]{Bl}, it is proved that $\Bir(\p^n_\k)$ is connected for the Zariski topology, when $\k$ is algebraically closed. Looking at the proof, we observe that $\Bir(\p^n_\C)$ and $\Bir(\p^n_\R)$ are in fact path connected (and hence connected) for the Euclidean topology. The same holds for the proof of \cite[Proposition 4.1, Th\'eor\`eme 4.2]{Bl}, which  can be adapted to see that $\Bir(\p^2_\C)$ is a simple topological group for the Euclidean topology (although it is not a simple group \cite{Cantat-Lamy}).
\end{remark}

\begin{remark}
Having the natural Euclidean topology on the sets $\Bir(\p^n)_{\le d}$, there are many ways of extending it to the union of all $\Bir(\p^n)_{\le d}$,
which is the Cremona group $\Bir(\p^n)$; the one that we have chosen is the finest one. It would be interesting to see whether another choice could yield a locally compact topology on $\Bir(\p^n)$, or a metrisable one.
\end{remark}

\end{document}